\definecolor{amethyst}{rgb}{0.6, 0.4, 0.8}
\definecolor{lcolor}{rgb}{0.6,0.3,0.3}
\definecolor{caribbeangreen}{rgb}{0.0, 0.8, 0.6}
\definecolor{flamingopink}{rgb}{0.99, 0.56, 0.67}
\definecolor{hollywoodcerise}{rgb}{0.96, 0.0, 0.63}
\definecolor{airforceblue}{rgb}{0.36, 0.54, 0.66}
\definecolor{ballblue}{rgb}{0.13, 0.67, 0.8}
\definecolor{brinkpink}{rgb}{0.98, 0.38, 0.5}
\definecolor{bluegray}{rgb}{0.4, 0.6, 0.8}
\definecolor{impx}{rgb}{0.96, 0.0, 0.63}
\definecolor{junglegreen}{rgb}{0.16, 0.67, 0.53}
\definecolor{bluebell}{rgb}{0.64, 0.64, 0.82}
\definecolor{cornflowerblue}{rgb}{0.39, 0.58, 0.93}
\definecolor{pastelred}{rgb}{1.0, 0.41, 0.38}
\definecolor{carminepink}{rgb}{0.92, 0.3, 0.26}
\definecolor{darkpastelpurple}{rgb}{0.59, 0.44, 0.84}
\definecolor{darkcerulean}{rgb}{0.03, 0.27, 0.49}
\definecolor{green(munsell)}{rgb}{0.0, 0.66, 0.47}
\definecolor{bleudefrance}{rgb}{0.19, 0.55, 0.91}
\definecolor{antiquefuchsia}{rgb}{0.57, 0.36, 0.51}
\definecolor{deeplilac}{rgb}{0.6, 0.33, 0.73}
\definecolor{ferngreen}{rgb}{0.31, 0.47, 0.26}
\definecolor{Icol}{rgb}{0.93, 0.53, 0.18}
\definecolor{Pcol}{rgb}{0.16, 0.32, 0.75}
\definecolor{Hcol}{rgb}{0.16, 0.67, 0.53}
\definecolor{Lcol}{rgb}{0.16, 0.67, 0.53}
\definecolor{qcolor}{rgb}{0.19,0.55,0.91}
\definecolor{hcolor}{rgb}{0.9,0.2,0.5}
\definecolor{scolor}{rgb}{0.9,0.5,0.2}
\definecolor{cccolor}{rgb}{0.55, 0.0, 0.55}
\definecolor{hxcolor}{rgb}{0., 0.5, 0.15}
\newcommand{\tc}[2]{#2}
\newcommand{\TC}[2]{#2}
\newcommand{\final}[1]{#1}
\newcommand{\newadd}[1]{\tc{bluegray}{#1}}
\newcommand{\step}{\vskip 3mm}
\newcommand{\p}{\partial}
\newcommand{\Z}{\mathbbm{Z}}
\newcommand{\Nzero}{\mathbbm{N}_0}
\newcommand{\R}{\mathbbm{R}}
\newcommand{\one}{\mathbbm{1}}
\newcommand{\eps}{\epsilon}
\DeclareMathOperator{\Hom}{Hom}
\DeclareMathOperator{\SO}{SO}
\DeclareMathOperator{\tr}{tr}
\renewcommand{\div}{\operatorname{div}}
\newcommand{\curl}{\operatorname{curl}}
\newcommand{\CURL}{\operatorname{CURL}}
\newcommand{\grad}{\operatorname{grad}}
\newcommand{\coker}{\textnormal{coker}}
\newcommand{\image}{\textnormal{image}}
\newcommand{\supp}{\textnormal{supp}}
\newcommand{\Harm}{\mathcal{H}}
\newcommand{\dist}{\textnormal{dist}}
\renewcommand{\subset}{\subseteq}
\newcommand{\ddR}{\tc{blue}{d}}
\newcommand{\I}{\mathcal{I}}
\newcommand{\new}[1]{\tc{deeplilac}{#1}}
\newcommand{\jap}[1]{\langle #1 \rangle}
\newcommand{\blf}[1]{\textnormal{\tc{red}{lf}}_{#1}}
\newcommand{\brf}[1]{\textnormal{\tc{red}{rf}}_{#1}}
\newcommand{\bff}[1]{\textnormal{\tc{red}{ff}}_{#1}}
\newcommand{\diag}{\textnormal{diag}}
\newcommand{\bdiag}{\textnormal{diag}_b}
\newcommand{\intermult}{\iota}
\newcommand{\gammaref}{\gamma}
\newcommand{\dens}{|\Omega|}
\newcommand{\distr}{\mathcal{D}}
\newtheoremstyle{mytheoremstyle}
{14pt}
{14pt}
{\itshape}
{20pt}
{\bfseries}
{.}
{.5em}
{}
\newtheoremstyle{myremarkstyle}
{10pt}
{10pt}
{}
{20pt}
{\itshape}
{.}
{.5em}
{}
\newtheoremstyle{myproofstyle}
{12pt}
{12pt}
{}
{20pt}
{\itshape}
{.}
{.5em}
{}
\newtheoremstyle{mycommentstyle}
{5pt}
{5pt}
{\footnotesize\sffamily\color{airforceblue}}
{0pt}
{}
{}
{.5em}
{}
\newtheoremstyle{myissuestyle}
{14pt}
{14pt}
{\itshape\color{magenta}}
{20pt}
{\bfseries}
{.}
{.5em}
{}
\newtheoremstyle{myquestionstyle}
{14pt}
{14pt}
{\color{airforceblue}}
{20pt}
{\bfseries}
{.}
{.5em}
{}
\theoremstyle{mytheoremstyle}
\newtheorem{theorem}{Theorem}
\newtheorem*{theorem*}{Theorem}
\newtheorem{definition}{Definition}
\newtheorem{lemma}{Lemma}
\newtheorem{corollary}[lemma]{Corollary}
\theoremstyle{myissuestyle}
\theoremstyle{myquestionstyle}
\theoremstyle{myremarkstyle}
\newtheorem{remark}{Remark}
\newtheorem{example}{Example}
\theoremstyle{myproofstyle}
\newcommand{\func}{\mathcal{R}}
\newcommand{\vect}{\mathcal{V}}
\newcommand{\mat}{\mathcal{M}}
\newcommand{\matsym}{\mathcal{M}_{\textnormal{sym}}}
\DeclareMathOperator{\symtr}{\hat{s}}
\newcommand{\WdR}{W_{\textnormal{dR}}}
\newcommand{\WddR}{d_{\textnormal{dR}}}
\newcommand{\CC}{W}
\newcommand{\DD}{D}
\newcommand{\GDD}{\tc{orange}{G}_{\DD}}
\newcommand{\PiDD}{\tc{orange}{\Pi}_{\DD}}
\newcommand{\PDD}{\tc{orange}{P}_{\DD}}
\newcommand{\IDD}{\tc{orange}{I}_{\DD}}
\newcommand{\dx}{\,d\mu}
\newcommand{\Wten}{\widetilde{W}}
\newcommand{\dten}{\widetilde{D}}
\newcommand{\deltaten}{\tc{red}{q}}
\newcommand{\deltaHPL}{\tc{red}{q}}
\DeclareMathOperator{\tomat}{\tilde\varepsilon}
\DeclareMathOperator{\tovec}{\varepsilon}
\newcommand{\matst}{\widehat{\mat}}
\DeclareMathOperator{\gradst}{\widehat{\grad}}
\DeclareMathOperator{\curlst}{\widehat{\curl}}
\DeclareMathOperator{\divst}{\widehat{\div}}
\newcommand{\HHdr}{\widetilde{G}}
\newcommand{\DIVtodr}{\widetilde{I}}
\newcommand{\drtoDIV}{\widetilde{P}}
\DeclareMathOperator{\Mincl}{\tc{orange}{\hat\iota}}
\newcommand{\Gten}{\tc{lcolor}{\slashed{G}}}
\newcommand{\Pten}{\tc{lcolor}{\slashed{P}}}
\newcommand{\Iten}{\tc{lcolor}{\slashed{I}}}
\newcommand{\PGRg}{A}
\newcommand{\PGRk}{B}
\newcommand{\GGRg}{G_A}
\newcommand{\GGRk}{G_B}
\newcommand{\bump}{\psi}
\newcommand{\homotopypicture}{
\begin{tikzpicture}[scale=0.95]
\def\x{2};
\def\y{1.5};
\def\z{7};
\def\sh{0.5};
\def\l{4};
\begin{scope}[decoration={
    markings,
    mark=at position 0.65 with {\arrow{Latex[length=1.5mm,width=1.5mm]}}}]
\draw[->] (0,-0.2) -- (0,2.5) node[anchor=east] {\footnotesize $t$};
\draw (0,0) node[anchor=east] {\footnotesize $0$};
\draw (0,2) node[anchor=east] {\footnotesize $1$};
\draw[very thick] (-0.1,2) -- (\l,2) node[anchor=west] {\footnotesize $\{1\}\times\Mclosed$};
\draw[very thick] (-0.1,0) -- (\l,0) node[anchor=west] {\footnotesize $\{0\}\times\Mclosed$};
\draw[very thick,scolor,postaction={decorate}] (\x,1.5) -- (\x,0) node[midway,anchor=east,scolor] {\footnotesize $h_0(s,(t,y))$};
\draw[very thick, cornflowerblue,postaction={decorate}] (\x,1.5) -- (\x+\sh,0)
node[midway,anchor=west,cornflowerblue] 
{\footnotesize $h_{a\neq0}(s,(t,y))$};
\draw[fill=black] (\x,1.5) circle(2pt) 
node[anchor=south] {\footnotesize $(t,y)$};
\draw[fill=scolor,scolor] (\x,0) circle(2pt) 
node[anchor=north,xshift=-2mm] {\footnotesize $\phantom{\phi_a}y$};
\draw[fill=cornflowerblue,cornflowerblue] (\x+\sh,0) circle(2pt) 
node[anchor=north,xshift=2mm] {\footnotesize $\phi_a(y)$};
\end{scope}
\begin{scope}[xshift=0.55\linewidth,
decoration={
    markings,
    mark=at position 0.75 with {\arrow{Latex[length=1.5mm,width=1.5mm]}}}]
\draw[->] (0,-0.2) -- (0,2.5) node[anchor=east] {\footnotesize $t$};
\draw (0,0) node[anchor=east] {\footnotesize $0$};
\draw (0,2) node[anchor=east] {\footnotesize $1$};
\draw[very thick] (-0.1,2) -- (3,2) node[anchor=west] {\footnotesize $\{1\}\times\Mclosed$};
\draw[very thick] (-0.1,0) -- (3,0) node[anchor=west] {\footnotesize $\{0\}\times\Mclosed$};
\draw[very thick,scolor,postaction={decorate}] (\y,.5) -- (\y,0);
\draw[very thick, cornflowerblue,postaction={decorate}] (\y,.5) -- (\y+\sh,0);
\draw[fill=black] (\y,.5) circle(2pt) 
node[anchor=south] {\footnotesize $(t',y)$};
\draw[fill=scolor,scolor] (\y,0) circle(2pt) 
node[anchor=north,xshift=-2mm] {\footnotesize $\phantom{\phi_a}y$};
\draw[fill=cornflowerblue,cornflowerblue] (\y+\sh,0) circle(2pt) 
node[anchor=north,xshift=2mm] {\footnotesize $\phi_a(y)$};
\end{scope}
\end{tikzpicture}}
\title{%
A support preserving homotopy for the de Rham complex 
with boundary decay estimates}
\author{Andrea N\"utzi}
\newcommand{\Mfd}{\tc{lcolor}{M}}
\newcommand{\MfdBC}{\tc{lcolor}{Y}}
\newcommand{\XMfd}{\tc{lcolor}{X}}
\newcommand{\Mdim}{\tc{lcolor}{n}}
\newcommand{\nBC}{\tc{hollywoodcerise}{N}}
\newcommand{\bdf}{\tc{lcolor}{\rho}}
\newcommand{\Gfull}{\tc{lcolor}{G}}
\newcommand{\Ifull}{\tc{lcolor}{I}}
\newcommand{\Pfull}{\tc{lcolor}{P}}
\newcommand{\Pifull}{\tc{lcolor}{\Pi}}
\newcommand{\hodge}{\tc{lcolor}{\star}}
\newcommand{\MMfd}{\tc{lcolor}{M}^{\tc{lcolor}{2}}}
\newcommand{\bMfd}{\tc{lcolor}{M}^{\tc{lcolor}{2}}_{\tc{lcolor}{b}}}
\newcommand{\bconormal}{\tc{lcolor}{\I}_{\textnormal{cl}}}
\newcommand{\Cvan}{\dot{C}}
\newcommand{\Symb}{S_{\text{cl}}}
\newcommand{\A}{\mathcal{A}}
\newcommand{\ind}[1]{\mathcal{E}_{#1}}
\newcommand{\indd}[1]{\mathcal{E}_{#1}'}
\newcommand{\inddd}[1]{\mathcal{E}_{#1}''}
\newcommand{\KB}{\mathcal{H}}
\newcommand{\Hsob}{\tc{ferngreen}{H}_{\tc{ferngreen}{b}}}
\newcommand{\blowdown}{\tc{cccolor}{\beta}_b}
\newcommand{\blowdownDistr}{\tc{cccolor}{\beta}_b}
\newcommand{\bPsdo}{\Psi_{\smash{b}}}
\newcommand{\bVF}{\tc{impx}{\mathcal{V}}_{\tc{impx}{b}}}
\newcommand{\Psdo}{\tc{cccolor}{\Psi}}
\newcommand{\piR}{\pi_R}
\newcommand{\piL}{\pi_L}
\newcommand{\Mclosed}{\TC{cccolor}{Y}}
\newcommand{\Mcollar}{\TC{cccolor}{U}}
\newcommand{\McollarBC}{\tc{cccolor}{Y}}
\newcommand{\Mcollarb}{\TC{blue}{U_b^2}}
\newcommand{\McollarbOpen}{\TC{cccolor}{X}}
\newcommand{\Gcollar}{\tc{scolor}{G}}
\newcommand{\tcoord}{t}
\newcommand{\VBrel}[3]{\tc{ferngreen}{\wedge^{#1} T^*(#2,#3)}}
\newcommand{\VBrelpt}[4]{\tc{ferngreen}{\wedge^{#1} T^*_{#4}(#2,#3)}}
\newcommand{\CdimNEW}{q}
\newcommand{\Kbundle}{W}
\newcommand{\Lmap}{\TC{orange}{L}}
\newcommand{\Amap}{\tc{caribbeangreen}{A}}
\newcommand{\charact}{\tc{orange}{\chi}}
\newcommand{\Rrad}[1]{\overline{\R^{#1}}}
\newcommand{\Lb}{\tc{ferngreen}{L}^{\tc{ferngreen}{2}}}
\newcommand{\bVFspan}{\tc{ferngreen}{V}}
\newcommand{\Lx}{L^2}
\newcommand{\Hx}{\tc{deeplilac}{H}_{b}}
\newcommand{\heaviside}{\tc{orange}{H}}
\newcommand{\bdfdiag}{\TC{cccolor}{\rho}}
\newcommand{\blowdowndiag}{\tc{blue}{\beta}_{\tc{blue}{\textnormal{d}}}}
\newcommand{\cutoff}{\tc{antiquefuchsia}{\varphi}}
\newcommand{\nVF}{\TC{red}{\ell}}
\newcommand{\order}[1]{\tc{junglegreen}{#1}}
\newcommand{\dd}{\TC{orange}{\mathrm{d}}}
\newcommand{\Gtay}{\tc{lcolor}{G'}_{\tc{lcolor}{H}}}
\newcommand{\Ptay}{\tc{lcolor}{P'}_{\tc{lcolor}{H}}}
\newcommand{\Ghodge}{\tc{lcolor}{G}_{\tc{lcolor}{H}}}
\newcommand{\phodge}{\tc{lcolor}{P}_{\tc{lcolor}{H}}}
\newcommand{\ihodge}{\tc{lcolor}{I}_{\tc{lcolor}{H}}}
\newcommand{\Gfullx}{\Gfull}
\newcommand{\Pifullx}{\Pifull}
\newcommand{\GdR}{\tc{lcolor}{G}}
\newcommand{\PidR}{\tc{lcolor}{\Pi}}
\newcommand{\PdR}{\tc{lcolor}{P}}
\newcommand{\IdR}{\tc{lcolor}{I}}
\newcommand{\Lie}{\mathcal{L}}
\newcommand{\HWdr}{H_{\textnormal{dR}}}
\let\oldabstract\abstract
\let\oldendabstract\endabstract
\renewenvironment{abstract}
{%
               {\list{}{\setlength{\leftmargin}{6.5mm} 
                        \listparindent 1.5em%
                        \itemindent    \listparindent%
                        \rightmargin   \leftmargin%
                        \parsep        \z@ \@plus\p@}%
                \item\relax}%
               {\endlist}%
\oldabstract}
{\oldendabstract}
\date{}
\begin{document}
\maketitle

\begin{abstract}
We study the de Rham complex of relative differential forms on compact manifolds with boundary. Chain homotopies for this complex are highly non-unique, and different homotopies can have different analytic properties, particularly near the boundary. We construct a chain homotopy that has desirable support propagation properties, and that satisfies estimates relative to weighted Sobolev norms, where the weights measure decay at the boundary. The estimates are optimal given the homogeneity properties of the de Rham differential under boundary dilation, and are obtained by showing that the homotopy is a b-pseudodifferential operator. As a corollary we obtain a right inverse of the divergence operator on Euclidean space that preserves support on large balls around the origin, and satisfies estimates that measure decay at infinity. Such a support preserving right inverse was constructed before by Bogovskii, but its mapping properties are not optimal with respect to decay. As a further corollary, in three dimensions we obtain a right inverse of the divergence operator on symmetric traceless matrices, and therefore of the linearized constraint operator of general relativity about flat space.
\end{abstract}

\tableofcontents

\section{Introduction}
Consider a smooth compact manifold $\Mfd$ with boundary $\p\Mfd$.
%
The de Rham complex on $\Mfd$ is given by the smooth 
differential forms $\Omega(\Mfd)$ together with the de Rham differential $\ddR$.
We consider its subcomplex of relative forms,
given by all forms whose 
tangential parts vanish at the boundary,
\begin{equation*}
\Omega(\Mfd,\p\Mfd) = \{ \omega\in\Omega(\Mfd)\mid i^*\omega=0 \}
\end{equation*}
where $i:\p\Mfd\hookrightarrow\Mfd$ is the natural inclusion.
This is indeed a subcomplex because $i^*$ and $\ddR$ commute.
The relative homology
\[ 
H(\Mfd,\p\Mfd)
=
\frac{\{\omega\in\Omega(\Mfd,\p\Mfd)\mid\ddR\omega=0\}}{\{\ddR\omega\mid \omega\in\Omega(\Mfd,\p\Mfd)\}}
\]
is finite-dimensional,
and isomorphic to the homology of the complex
of compactly supported forms in the interior of $\Mfd$.

A chain homotopy for the complex of relative forms on $\Mfd$
is, for our purpose, a linear map $\Gfull$ from relative
$k$-forms to relative $(k-1)$-forms such that
\begin{equation*}
\one - \Pifull \;=\; \ddR\Gfull + \Gfull\ddR
\end{equation*}
where $\Pifull$ is a projection 
onto a complement of the image of $\ddR$
in the kernel of $\ddR$,
in other words, it is a projection onto relative homology.
This equation implies in particular that 
the restriction of $\Gfull$ to closed forms
is a right inverse of $\ddR$, 
up to finitely many integrability conditions.
Such a chain homotopy is highly non-unique.
We construct a homotopy that has desirable properties
with regard to support and estimates, in the following informal sense: 
\newcommand{\cutoffin}{\varphi}
\newcommand{\cutoffout}{\varphi}
\begin{enumerate}[label=(\roman*)]
\item\label{item:introSupport}
It is support preserving near the boundary:
For all sufficiently small $\eps>0$, 
if the input vanishes in an $\eps$-neighborhood of a boundary component,
then also the output vanishes there.
The $\eps$-neighborhood is defined
with respect to a boundary defining function.

\item\label{item:introLocalpsdo}
It is a pseudodifferential operator in the interior:
For every smooth function $\cutoffin$
with compact support in the interior of $\Mfd$, the map
$\cutoffout\Gfull\cutoffin$ is a pseudodifferential operator of order $-1$.
This implies mapping properties, 
for example $\cutoffout\Gfull\cutoffin$ 
gains one derivative with respect to Sobolev norms.

\item\label{item:introbdarypsdo}
It is a pseudodifferential operator up to the boundary:
The map $\Gfull$ satisfies estimates for weighted Sobolev norms,
where the weights measure decay at the boundary.
The estimates 
are optimal with respect to the homogeneity properties,
under boundary dilation,
of the de Rham differential acting on relative forms.
They are obtained by showing that
$\Gfull$ is a b-pseudodifferential operator
in the sense of Melrose \cite{MelroseGreenBook},
see also \cite{Mazzeo91},
a natural class of pseudodifferential operators
on manifolds with boundary that have controlled homogeneity,
with a Schwartz kernel that vanishes in a 
neighborhood of the so-called left face.
This is consistent with the fact that the de Rham differential
acting on relative forms is b-differential.
\end{enumerate}

Previous constructions yield chain homotopies which have 
some but not all of these properties. 
A well-known construction uses the Hodge decomposition.
Given a smooth Riemannian metric on $\Mfd$, 
then \cite[Section 5.9]{taylor1} the Greens function of the
Hodge Laplacian with relative boundary conditions
post-composed with the codifferential is a chain homotopy 
that satisfies \ref{item:introLocalpsdo}
but does not satisfy \ref{item:introSupport}
and has a different structure at the boundary 
than required in \ref{item:introbdarypsdo}.
If one uses a metric with cylindrical ends rather than a smooth metric, 
then 
\cite[Section 6.4]{MelroseGreenBook}
the Hodge decomposition yields a homotopy
for a related complex that does not
satisfy \ref{item:introSupport} but satisfies 
\ref{item:introLocalpsdo} and part of \ref{item:introbdarypsdo}, 
namely it is b-pseudodifferential,
but the kernel does not vanish near the left face.

Suppose that the manifold $\Mfd$ is the radial compactification of $\R^{\Mdim}$. 
Then the restriction of the chain homotopy $\Gfull$
to a map from $\Mdim$-forms to $(\Mdim-1)$-forms 
is a right inverse of 
the divergence operator on $\R^{\Mdim}$ for the Euclidean metric,
up to the usual integrability condition.
We compare with a construction of Bogovskii \cite{Bogovskii2,Bogovskii1}, who
introduced an
explicit right inverse of the divergence designed to satisfy  \ref{item:introSupport}. Concretely, 
for $f$ a smooth compactly supported function on $\R^{\Mdim}$ set
\[ 
B(f)(x) = -x \int_0^1 \frac{1}{s^{\Mdim+1}} f\big(\frac{x}{s}\big)\,ds
\;,\quad x\in\R^{\Mdim}\setminus0
\]
This vector field is smooth away from the origin,
and has an integrable singularity at the origin.
It is a right inverse in the sense of distributions.
Then the right inverse introduced by Bogovskii 
is given by the average
\begin{equation}\label{eq:translaverage}
\int_{\R^{\Mdim}} \zeta(a) (T_a\circ B \circ T_{-a})(f)\, da
\end{equation}
where $T_a$ denotes translation by $a$,
and $\zeta$ is a bump function around the origin with integral 
equal to one.
This inverse satisfies \ref{item:introLocalpsdo} \cite{OhTataru1}. 
It does not satisfy \ref{item:introbdarypsdo}, essentially because
the average over translations degenerates at infinity
(see Corollary \ref{cor:EuclEst} and the discussion thereafter).
It was recently used
in the context of Euler flows \cite{IsettOh}
and the Maxwell-Klein-Gordon equation \cite{OhTataru1};
and to prove various 
gluing results for the constraint equations of general relativity \cite{MaoTao,MaoOhTao}.

In \cite{D,HintzUnderdetermined},
the inhomogeneous problem for more general
underdetermined elliptic linear operators is studied.
In \cite{D}, see also \cite{CD}, sufficient conditions for the existence of smooth compactly supported solutions are given, assuming the inhomogeneity is compactly supported.
For underdetermined elliptic b-differential 
operators on compact manifolds with boundary,
such as the divergence on asymptotically Euclidean manifolds
upon compactification,
\cite{HintzUnderdetermined} constructs
solutions with optimal asymptotic expansion at infinity.
\step
Though our main theorem concerns the de Rham differential,
it may be used to study other important operators.
In particular, in Appendix \ref{ap:div2} we use the 
chain homotopy $\Gfull$ to construct a 
right inverse of the divergence operator on symmetric traceless matrices 
in $\R^3$, with analytic properties like $\Gfull$.

The divergence on symmetric traceless matrices 
appears in the constraint equations of general relativity,
a key motivation for this paper.
Specifically we are interested in solutions of the constraints that 
are Schwartz class up to a Kerr-tail at infinity\footnote{%
An interesting open problem is whether solutions
of the Einstein equations with such initial data
admit a smooth conformal compactification at null infinity,
c.f.~\cite{mythesis}.}.
Constructing such solutions via an iteration scheme
requires a right inverse of the linearized constraint
operator with precise control over the asymptotics at infinity,
which this paper provides, see Remark \ref{rem:linconstraints}.
\step
We now state the main theorem, which yields existence of 
a chain homotopy for the complex of relative forms on $\Mfd$ that 
has the properties \ref{item:introSupport}, \ref{item:introLocalpsdo}, \ref{item:introbdarypsdo}.
Recall that a boundary defining function for a 
boundary component $\MfdBC\subset\p\Mfd$ is a
smooth function $\bdf: \Mfd\to [0,\infty)$ that satisfies 
$\MfdBC = \bdf^{-1}(0)$ and for which $d\bdf|_{\MfdBC}$ is nowhere zero.
Given such a boundary defining function there
exists $\eps_0>0$ such that $\bdf^{-1}([0,\eps_0])$
is a collar neighborhood of $\MfdBC$, 
meaning that there is a diffeomorphism 
$\bdf^{-1}([0,\eps_0])\to [0,\eps_0]\times \MfdBC$
that restricts to the natural identification $\MfdBC \to \{0\}\times \MfdBC$,
and whose first component 
$\bdf^{-1}([0,\eps_0])\to [0,\eps_0]$ is given by $\bdf$.
%

\begin{theorem}\label{thm:main}
Let $\Mfd$ be a smooth connected compact manifold with boundary that has
$\nBC\ge1$ boundary components, $\p\Mfd=\MfdBC_1\sqcup\dots\sqcup\MfdBC_{\nBC}$.
For each $i=1\dots\nBC$ fix a boundary defining function $\bdf_i$ for $\MfdBC_i$.
Fix $\eps_0>0$ such that for each $i$ the set 
$\bdf_i^{-1}([0,\eps_0])$ is a collar neighborhood of $\MfdBC_i$
and such that the collar neighborhoods are pairwise disjoint.
Then there exists an $\R$-linear map
\begin{equation}\label{eq:Gfullthm}
\Gfull:\; \Omega^k(\Mfd,\p\Mfd)\to \Omega^{k-1}(\Mfd,\p\Mfd)
\end{equation}
defined for every integer $k$, 
with the following properties:
\begin{enumerate}[label=(a\arabic*)]
\item \label{item:AlgFull}
Algebraic control: 
The map 
$\Pifull:\Omega^k(\Mfd,\p\Mfd)\to \Omega^{k}(\Mfd,\p\Mfd)$ defined by 
\[ 
\one-\Pifull \;=\; \ddR \Gfull + \Gfull \ddR 
\]
satisfies $\Pifull^2=\Pifull$ and $\Pifull\ddR =0$ and $\ddR\Pifull=0$.
In particular, $\Pifull$ is a projection onto 
a complement of the image of $\ddR$ in the kernel of $\ddR$. 
\item \label{item:SupportFull}
Support control: 
For all $0<\eps\le\frac{\eps_0}{2}$,
$i=1\dots\nBC$, 
and $\omega\in \Omega(\Mfd,\p\Mfd)$,
\begin{align}\label{eq:SupportFull}
\omega|_{\bdf_i\le\eps}=0 
\qquad&\Rightarrow\qquad
(\Gfull\omega)|_{\bdf_i\le\eps}=0
\end{align}
%
Moreover for all 
$i=1\dots\nBC$
and $\omega\in \Omega(\Mfd,\p\Mfd)$
one has $(\Pifull\omega)|_{\bdf_i\le\frac{\eps_0}{2}}=0$.
\item \label{item:AnalysisFull}
Analytic control: The maps $\Gfull$ and $\Pifull$ are b-pseudodifferential
operators,
\begin{align*}
\Gfull&\;\in\; \bPsdo^{-1,\order{(0,\emptyset,0)}}(\Mfd;\VBrel{}{\Mfd}{\p\Mfd})\\
\Pifull&\;\in\;\Psdo^{-\infty,\order{(\emptyset,0)}}(\Mfd;\VBrel{}{\Mfd}{\p\Mfd})
\end{align*}
See Definition \ref{def:bpsdo} for explanation of this notation.
\final{Here $\VBrel{}{\Mfd}{\p\Mfd}$ is defined
as the vector bundle whose space of smooth sections is 
$\Omega(\Mfd,\p\Mfd)$, as in Swan's theorem.} 
\end{enumerate}
\end{theorem}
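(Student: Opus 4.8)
\emph{Strategy.} I would build $\Gfull$ by gluing an explicit homotopy near $\p\Mfd$, coming from the product structure of the collars, to a Hodge-type homotopy in the interior, and then correct the resulting projection. \emph{Step 1 (collar model).} On the collar $\bdf_i^{-1}([0,\eps_0])\cong[0,\eps_0]\times\MfdBC_i$ write a relative form as $\omega=\ddR\bdf_i\wedge a+b$, where $a,b$ are $\bdf_i$-families of forms on $\MfdBC_i$ and the relative condition is precisely $b|_{\bdf_i=0}=0$, and set $(K_i\omega)(\bdf_i)=\int_0^{\bdf_i}a(s)\,ds$. A one-line computation gives $\ddR K_i+K_i\ddR=\one$ on the collar (no projection term, consistent with the relative de Rham cohomology of the collar being trivial), $K_i$ maps relative forms to relative forms, and $K_i$ is manifestly support preserving: if $\omega$ vanishes for $\bdf_i\le\eps$ then so does $K_i\omega$, for every $\eps$. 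In the collar coordinates the Schwartz kernel of $K_i$ is, up to the identification of $\VBrel{}{\Mfd}{\p\Mfd}$ with the relevant bundle, the function $\mathbf 1_{\{s\le\bdf_i\}}$ times the diagonal delta on $\MfdBC_i$; in the b-coordinate $s/\bdf_i$ this is a conormal distribution of the expected order, supported in $\{s/\bdf_i\le1\}$ and hence vanishing near the left face, so $K_i\in\bPsdo^{-1,\order{(0,\emptyset,0)}}$. Verifying this against Definition~\ref{def:bpsdo}, with the correct index sets and bundle coefficients, is the analytic core.

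\emph{Step 2 (gluing to the interior).} Fix cutoffs $\charact_i\in C^\infty(\Mfd)$ supported in $\bdf_i^{-1}([0,\eps_0))$ with $\charact_i\equiv1$ on $\{\bdf_i\le3\eps_0/4\}$, and let $\Lmap:=\sum_i\charact_i K_i$ (extended by zero), $\charact:=\sum_i\charact_i$. Then
\[
P_1:=\one-(\ddR\Lmap+\Lmap\ddR)=(1-\charact)\,\one-\sum_i(\ddR\charact_i)\wedge K_i
\]
is a chain map, chain-homotopic to $\one$ via $\Lmap$, with range in forms supported in a fixed compact set $\Mfd_0\Subset\{\bdf_i>\eps_0/2\ \forall i\}$. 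Let $\Ghodge$ be a Hodge homotopy for the relative complex with respect to some smooth metric, $\ddR\Ghodge+\Ghodge\ddR=\one-\mathcal H$ with $\mathcal H$ the relative-harmonic projection; this $\Ghodge$ is a pseudodifferential operator of order $-1$ in the interior. Pick $\cutoff\in C^\infty(\Mfd)$ with $\cutoff\equiv1$ on a neighbourhood of $\Mfd_0$, $\supp\cutoff\subset\{\bdf_i>\eps_0/2\ \forall i\}$, and $\supp\ddR\cutoff$ disjoint from $\Mfd_0$, and set $\Gfull':=\Lmap+\cutoff\,\Ghodge P_1$. Using that $P_1$ is a chain map and $\cutoff P_1=P_1$,
\[
\ddR\Gfull'+\Gfull'\ddR=\one-P_2,\qquad P_2:=\cutoff\,\mathcal H P_1-(\ddR\cutoff)\wedge\Ghodge P_1 .
\]
Since $\supp\ddR\cutoff$ and $\Mfd_0\supset\image P_1$ are disjoint compacta in $\Mfd^\circ$, the kernel of $\Ghodge$ is smooth there, so $(\ddR\cutoff)\wedge\Ghodge P_1$ is a smoothing operator; together with the finite-rank $\cutoff\,\mathcal H P_1$, the operator $P_2$ is a smoothing chain map, still chain-homotopic to $\one$, with range in $\{\bdf_i>\eps_0/2\}$-supported forms. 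Because the added term $\cutoff\,\Ghodge P_1$ has Schwartz kernel supported away from $\p\Mfd$, the operator $\Gfull'$ coincides with $K_i$ on $\{\bdf_i\le\eps_0/2\}$, is still b-pseudodifferential of order $-1$, and still has kernel vanishing near the left face: the gluing errors are b-differential operators of order $0$ (multiplication by $\ddR\charact_i$- and $\ddR\cutoff$-type factors) composed with operators of the b-calculus, plus genuinely smoothing remainders, so they stay in the calculus.

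\emph{Step 3 (fixing the projection).} Using that relative homology is computed by forms compactly supported in the interior, pick closed relative forms $\omega_1,\dots,\omega_r$ supported in $\{\bdf_i>\eps_0/2\ \forall i\}$ representing a basis of $H(\Mfd,\p\Mfd)$, and closed forms $\eta_1,\dots,\eta_r\in\Omega(\Mfd)$ with $\int_\Mfd\omega_k\wedge\eta_j=\delta_{jk}$ (Poincar\'e--Lefschetz duality), and set $\Pifull\omega:=\sum_j\bigl(\int_\Mfd\omega\wedge\eta_j\bigr)\omega_j$. Stokes' theorem together with the relative condition gives $\Pifull^2=\Pifull$, $\ddR\Pifull=0$, $\Pifull\ddR=0$; moreover $\Pifull$ is smoothing, has range in $\{\bdf_i>\eps_0/2\}$-supported forms, and has kernel vanishing near the left face. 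Both $P_2$ and $\Pifull$ are smoothing chain maps inducing the identity on $H(\Mfd,\p\Mfd)$, so $R:=P_2-\Pifull$ is a smoothing, null-homotopic chain map whose range lies in $\{\bdf_i>\eps_0/2\}$-supported forms; a standard homological-algebra argument — carried out inside the complex of smoothing operators and using that the relevant deep-interior region is homotopy equivalent to $\Mfd$ — produces a contraction $S$ of the same type, $\ddR S+S\ddR=R$. Set $\Gfull:=\Gfull'+S$. Then $\ddR\Gfull+\Gfull\ddR=\one-\Pifull$, which is \ref{item:AlgFull} together with the last assertion of \ref{item:SupportFull}; the implication in \ref{item:SupportFull} holds because $S$ contributes nothing on $\{\bdf_i\le\eps_0/2\}$, where $\Gfull$ therefore agrees with $K_i$; and \ref{item:AnalysisFull} follows since $S$ is smoothing with kernel supported away from $\p\Mfd$, so it affects neither the order nor the vanishing near the left face established for $\Gfull'$.

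\emph{Main obstacle.} The genuine work is in Steps 1--2: showing that the model operators $K_i$, and then the glued operator $\Gfull'$, lie in $\bPsdo^{-1,\order{(0,\emptyset,0)}}(\Mfd;\VBrel{}{\Mfd}{\p\Mfd})$ with kernel vanishing near the left face --- i.e.\ pinning down the conormal and index-set structure on the b-double space, treating the bundle coefficients correctly, and checking that the cutoff and commutator errors remain in the b-calculus with no kernel creeping back towards $\p\Mfd$. The homological-algebra cleanup of Step~3 is soft by comparison, though the bookkeeping that keeps the correction smoothing and supported away from the boundary still has to be carried out.
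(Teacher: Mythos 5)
There is a genuine gap in Step 1, and it is fatal to the whole proposal. You take as your collar model the naive Poincar\'e-lemma homotopy, $(K_i\omega)(\bdf_i)=\int_0^{\bdf_i}a(s)\,ds$, and claim that its Schwartz kernel, $\mathbf 1_{\{s\le\bdf_i\}}$ times the diagonal delta on $\MfdBC_i$, is ``in the b-coordinate $s/\bdf_i$ \ldots a conormal distribution of the expected order''. This is not true. In the projective coordinates $(t,y,u,y')$ on the b-double space (with $u=t'/t$), the kernel of $K_i$ is $\mathbf 1_{\{u\le1\}}\otimes\delta(y-y')$. Its singular support is the whole codimension-$(\dim\Mfd-1)$ set $\{y=y',\,0\le u\le1\}$, not merely the b-diagonal $\{y=y',\,u=1\}$. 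A classical conormal distribution to $\bdiag$ must be smooth off $\bdiag$, so $K_i\notin\bconormal^{\,\cdot}(\bMfd,\bdiag;\cdot)$ and hence $K_i\notin\bPsdo^{-1,\order{(0,\emptyset,0)}}$. One can also see this on the operator side: $K_i$ only integrates in the $t$-direction and does nothing in the $\MfdBC_i$-directions, so it cannot gain a derivative in the sense of Lemma~\ref{lem:Mappingprop}; an order~$-1$ b-pseudodifferential operator would have to.

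This is precisely the obstruction the paper's construction is designed to circumvent. The collar homotopy in Section~\ref{sec:collarnbhd} (Definition~\ref{def:Gcollardef}) is not the bare fiber integration $\pi_*h_0^*$; it is the average $\Gcollar_\zeta=\int\zeta(a)\,\pi_*h_a^*\,da$ over a family of topological homotopies $h_a$ that also flow along vector fields on $\Mclosed$. The averaging converts the product singularity $\mathbf 1_{\{u\le1\}}\otimes\delta(y-y')$ into a genuine conormal singularity concentrated at the b-diagonal, as one sees in Example~\ref{ex:Torus}, where the kernel carries the factor $(1-u)^{-q}\zeta\!\big(\frac{y'-y}{1-u}\big)$ --- smooth for $u<1$ and delta-concentrating at $y=y'$ only as $u\to1$. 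Without this averaging (or an equivalent device), the operator built in your Step~1 does not lie in the b-calculus, so the index-set bookkeeping of Step~2 and the mapping estimates have no foundation, and the correction in Step~3 cannot save them: the error $S$ would have to absorb a non-pseudolocal piece, which a smoothing operator cannot do. Your overall gluing-plus-correction architecture is a reasonable alternative to the paper's homotopy-equivalence bookkeeping (compare \eqref{eq:HEcompNEWNEW} and \eqref{eq:gformula}), but the collar model itself must be replaced by something like the averaged homotopy of Lemma~\ref{lem:HomotopyCollar}.
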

The proof appears in Section \ref{sec:proof}.
\step
We explain the mapping properties of $\Gfull$ implied by \ref{item:AnalysisFull},
see Lemma \ref{lem:Mappingprop} in Section \ref{sec:bpsdo} for details.
To state them we need the following definitions.
Let $\bVF$ be the space of b-vector fields,
given by all smooth vector fields on $\Mfd$
that are tangential to the boundary $\p\Mfd$.
In local coordinates $(\bdf,y^1,\dots,y^{\Mdim-1})$ 
around a boundary point,
where $\bdf$ is a boundary defining function 
and $y^1,\dots,y^{\Mdim-1}$ 
with $\Mdim=\dim\Mfd$ restrict to coordinates on $\p\Mfd$,
a local $C^\infty$-basis of $\bVF$ is given by
\begin{equation}\label{eq:bvfbasis}
\bdf\p_{\bdf},\ \p_{y^1},\dots,\p_{y^{\Mdim-1}}
\end{equation}
Note that these vector fields are invariant under scaling $\bdf$ by a positive number.
For $s\in\Nzero$ the b-Sobolev norm for smooth functions on $\Mfd$ is defined by 
\begin{equation}\label{eq:Hsobnorm}
\|f\|_{\Hsob^{s}(\Mfd)}^2
\;=\;
\sum_{j=0}^{s} 
\sum_{V_1,\dots,V_j\in\bVFspan} 
\|V_1\cdots V_{j}f\|_{\Lb(\Mfd)}^2
\end{equation}
\final{where $\bVFspan$ is a finite $C^\infty$-generating set of $\bVF$},
and where the $\Lb$-norm is defined using a smooth positive
measure on $\Mfd$.
One generalizes to $s\in\R$ by interpolation and duality.
Then for $s,\delta\in\R$ the weighted b-Sobolev norms are
$\|f\|_{\bdf^{\delta}\Hsob^s(\Mfd)} = \|\bdf^{-\delta} f\|_{\Hsob^s(\Mfd)}$
with $\bdf$ a boundary defining function for $\p\Mfd$.
\begin{corollary}
Let $\Gfull$ be as in Theorem \ref{thm:main}.
For all $s,\delta\in\R$ with $\delta>-\frac12$ there exists
a constant $C>0$ such that for all $\omega\in\Omega(\Mfd,\p\Mfd)$,
\begin{equation}\label{eq:IntroestimatesG}
\|\Gfull\omega\|_{\bdf^{\delta}\Hsob^{s+1}(\Mfd)}
\;\le\;
C\|\omega\|_{\bdf^{\delta}\Hsob^{s}(\Mfd)}
\end{equation}
where the norms are taken componentwise using 
local $C^\infty$-bases of $\Omega(\Mfd,\p\Mfd)$.
\end{corollary}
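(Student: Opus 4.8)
The plan is to derive this as an immediate consequence of the mapping properties of b-pseudodifferential operators, applied to the specific operator $\Gfull \in \bPsdo^{-1,\order{(0,\emptyset,0)}}(\Mfd;\VBrel{}{\Mfd}{\p\Mfd})$ furnished by Theorem \ref{thm:main}\ref{item:AnalysisFull}. First I would recall the general fact (Lemma \ref{lem:Mappingprop} in Section \ref{sec:bpsdo}) that a b-pseudodifferential operator of differential order $-1$ maps $\Hsob^{s}(\Mfd)$ boundedly to $\Hsob^{s+1}(\Mfd)$ for every $s\in\R$; this is the order-of-differentiation gain, and it holds for operators acting componentwise on sections of a vector bundle once one fixes local $C^\infty$-frames. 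The only subtlety beyond the unweighted statement is the behavior under the weight $\bdf^{\delta}$, which is controlled by the \emph{b-order} (the index $\order{(0,\emptyset,0)}$) of $\Gfull$, recording the orders of vanishing of the Schwartz kernel at the left, interior-diagonal, and right boundary faces of the b-double space $\bMfd$. The entry $0$ at the right face — together with the fact, emphasized in property \ref{item:introbdarypsdo}, that the kernel vanishes in a neighborhood of the left face — is precisely what makes conjugation by $\bdf^{-\delta}$ harmless for $\delta$ above the threshold dictated by $L^2$-boundedness.

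Concretely, I would argue as follows. Fix $s,\delta\in\R$ with $\delta>-\tfrac12$. Conjugating, it suffices to show that $\bdf^{-\delta}\Gfull\,\bdf^{\delta}$ is bounded $\Hsob^{s}(\Mfd)\to\Hsob^{s+1}(\Mfd)$. Since the Schwartz kernel of $\Gfull$ vanishes near the left face, the operator $\bdf^{-\delta}\Gfull\,\bdf^{\delta}$ has Schwartz kernel $\blowdown^*(\bdf^{-\delta})\cdot K_{\Gfull}\cdot \blowdown^*(\bdf^{\delta})$, where the pullbacks of $\bdf$ under the left and right projections are, on the b-double space, comparable to the boundary defining functions of the left and right faces respectively. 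Because $K_{\Gfull}$ is already supported away from the left face, multiplication by the (possibly negative) power $\blowdown^*(\bdf^{-\delta})$ coming from the left projection is multiplication by a smooth bounded function on the support; and multiplication by $\blowdown^*(\bdf^{\delta})$ from the right changes the right b-order from $0$ to $\delta$. Hence $\bdf^{-\delta}\Gfull\,\bdf^{\delta}\in\bPsdo^{-1,\order{(\delta,\emptyset,0)}}$ — or rather in the analogous class with right-face order $\delta>-\tfrac12$ — and the standard b-calculus $L^2$-boundedness criterion (a b-ps.d.o. of differential order $\le 0$ whose right-face order exceeds $-\tfrac12$ is bounded on $\Lb$, the half coming from the half-density normalization on the b-double space relative to the smooth density on $\Mfd$) combined with the differential-order gain gives the claimed bound on $\Hsob^{s+1}$. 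Finally, the componentwise statement follows by applying this to each matrix entry in fixed local $C^\infty$-trivializations of $\VBrel{}{\Mfd}{\p\Mfd}$ and summing, using a partition of unity; since $\Mfd$ is compact, finitely many charts suffice and the constant $C$ is uniform.

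I expect the main obstacle — or at least the only point requiring care rather than citation — to be the precise bookkeeping of the weight threshold $\delta>-\tfrac12$: one must check that the half-density conventions on $\bMfd$ and the smooth measure on $\Mfd$ used to define $\Lb$ in \eqref{eq:Hsobnorm} line up so that the $L^2$-boundedness of a b-ps.d.o. with right b-order $\delta$ indeed requires exactly $\delta>-\tfrac12$, and that this is compatible with the normalization of the b-order index $\order{(0,\emptyset,0)}$ in Definition \ref{def:bpsdo}. Everything else is a direct application of Lemma \ref{lem:Mappingprop} together with the structural input from Theorem \ref{thm:main}, in particular the vanishing of $K_{\Gfull}$ near the left face, which is what decouples the left-weight and makes the estimate one-sided and optimal in $\delta$.
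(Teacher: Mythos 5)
Your proposal is correct and takes essentially the same route as the paper: the corollary is a direct application of Lemma \ref{lem:Mappingprop} with $m=-1$, $\ind{}=\order{(0,\emptyset,0)}$ and $\delta'=\delta$ (so the three conditions read $0\ge 0$, trivially true at the left face since $\ind{\blf{}}=\emptyset$, and $0+\delta>-\tfrac12$), together with Remark \ref{rem:normsVB} for the componentwise bundle statement; your conjugation discussion is a sketch of the proof of that lemma itself, which the paper simply cites from Mazzeo. Only watch the index convention: the triple is ordered (front face, left face, right face), so the conjugated operator lies in the class with index $\order{(0,\emptyset,\delta)}$ rather than $\order{(\delta,\emptyset,0)}$, as your own parenthetical correction suggests.
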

Note that near the boundary a basis for the relative forms is
\begin{equation}\label{eq:relativebasis}
d\bdf \wedge dy^{i_1}\wedge\dots\wedge dy^{i_k},\;
\bdf\,dy^{i_1}\wedge\dots\wedge dy^{i_k} 
\end{equation}
where 
$1\le i_1<\dots< i_k \le \Mdim$ and $k$ runs over the integers.
Each basis element has the same
homogeneity under scaling $\bdf$ by a positive number.

\final{The estimates \eqref{eq:IntroestimatesG} are natural
estimates for a chain homotopy for the de Rham complex of relative forms.}
Namely $\ddR:\Omega(\Mfd,\p\Mfd)\to\Omega(\Mfd,\p\Mfd)$
is a first order b-differential operator,
which means that in the basis \eqref{eq:relativebasis} it only
differentiates along the b-vector fields \eqref{eq:bvfbasis},
see Lemma \ref{lem:ddRNEW}.
Hence it satisfies
\[ 
\|\ddR\omega\|_{\bdf^{\delta}\Hsob^{s}(\Mfd)}
\;\le\; 
C\|\omega\|_{\bdf^{\delta}\Hsob^{s+1}(\Mfd)}
\]
which is precisely the converse of \eqref{eq:IntroestimatesG}.

\step
{\bf Chain homotopy for de Rham on $\R^{\Mdim}$.}
As a special case one can apply Theorem \ref{thm:main}
to the radial compactification of $\R^{\Mdim}$, which
is informally given by attaching to $\R^{\Mdim}$ a sphere at infinity.
In this case the chain homotopy \eqref{eq:Gfullthm} restricts to,
in particular, a chain homotopy for the complex 
of compactly supported forms on $\R^{\Mdim}$,
and also, a chain homotopy for the complex
of forms whose components are Schwartz functions.
This homotopy preserves support on large balls around the origin.
The estimates \eqref{eq:IntroestimatesG} are equivalent to 
estimates for standard weighted 
b-Sobolev norms on $\R^{\Mdim}$.
The b-Sobolev norm for $s\in\Nzero$ is\footnote{%
Beware that
this is not the same norm as \eqref{eq:Hsobnorm}
with $\Mfd$ the radial compactification of $\R^{\Mdim}$,
which uses an $L^2$-norm relative to a
smooth measure on the compactification,
see Lemma \ref{lem:ComparabilityOfNorms}.}
\begin{equation}\label{eq:HxnormNEW}
\|f\|_{\Hx^s(\R^{\Mdim})}^2
=
\sum_{\substack{\alpha\in\Nzero^{\Mdim}\\|\alpha|\le s}}
\|(\jap{x}\p_{x^1})^{\alpha_1}\cdots  (\jap{x}\p_{x^{\Mdim}})^{\alpha_{\Mdim}} f\|_{\Lx(\R^{\Mdim})}^2
\end{equation}
where $x$ are the standard Cartesian coordinates on $\R^{\Mdim}$,
where $\jap{x}=\sqrt{1+\|x\|^2}$,
and the $\Lx$-norm is defined using the 
standard Lebesgue measure $dx^1\cdots dx^{\Mdim}$.
One generalizes to $s\in\R$ by interpolation and duality.
Then for $s,\delta\in\R$ the weighted b-Sobolev norms are
$\|f\|_{\jap{x}^{-\delta}\Hx^s(\R^{\Mdim})} = \|\jap{x}^{\delta} f\|_{\Hx^s(\R^{\Mdim})}$.
\begin{corollary}\label{cor:EuclEst}
Let $\Gfull$ be as in Theorem \ref{thm:main}
with $\Mfd$ the radial compactification of $\R^{\Mdim}$.
For all $k=0\dots\Mdim$ and
$s,\delta\in\R$ with $\delta>k-\frac{\Mdim}{2}$ 
there exists a constant $C>0$ such that for all 
compactly supported forms $\omega\in\Omega^k_c(\R^{\Mdim})$,
\begin{equation}
\|\Gfullx\omega\|_{\jap{x}^{-\delta+1}\Hx^{s+1}(\R^{\Mdim})}
\;\le\;
C
\|\omega\|_{\jap{x}^{-\delta}\Hx^{s}(\R^{\Mdim})}
\label{eq:estGxNEW}
\end{equation}
where the norms are taken componentwise 
using the basis given by all wedge products of zero or more of the
differentials $dx^1,\dots,dx^{\Mdim}$.
\end{corollary}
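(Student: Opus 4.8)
\medskip

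The plan is to deduce this estimate from the abstract one, \eqref{eq:IntroestimatesG} of the preceding Corollary, by comparing on $\Mfd$ the two relevant families of weighted Sobolev norms. Realize $\Mfd=\overline{\R^{\Mdim}}$, the radial compactification, with $\p\Mfd$ the sphere at infinity, and let $\bdf$ be a boundary defining function that equals $\jap{x}^{-1}$ near $\p\Mfd$ (and is comparable to $\jap{x}^{-1}$ globally, with smooth bounded ratio). Every $\omega\in\Omega^k_c(\R^{\Mdim})$ vanishes on a neighborhood of $\p\Mfd$, hence lies in $\Omega^k(\Mfd,\p\Mfd)$, so Theorem~\ref{thm:main} applies; by the support control~\ref{item:SupportFull} the form $\Gfull\omega$ again vanishes near $\p\Mfd$, so $\Gfull\omega\in\Omega^{k-1}_c(\R^{\Mdim})$ and both sides of \eqref{eq:estGxNEW} are finite. (For $k=0$ the statement is vacuous, $\Omega^{-1}=0$.) All the weight arithmetic below is arranged so that the hypothesis $\delta>k-\tfrac{\Mdim}{2}$ becomes exactly the threshold $\delta>-\tfrac12$ of \eqref{eq:IntroestimatesG}.

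The first ingredient is a \emph{scalar comparison}: $\|g\|_{\jap{x}^{-\delta}\Hx^s(\R^{\Mdim})}\sim\|g\|_{\bdf^{\,\delta+(\Mdim+1)/2}\Hsob^s(\Mfd)}$ for all $s,\delta\in\R$. For integer $s\ge0$ this is Lemma~\ref{lem:ComparabilityOfNorms}, whose ingredients are: $\jap{x}^{-1}$ is a boundary defining function for $\p\Mfd$; the vector fields $\jap{x}\p_{x^1},\dots,\jap{x}\p_{x^{\Mdim}}$ generate $\bVF$ over $C^\infty(\Mfd)$ — near $\p\Mfd$ each is a smooth positive multiple of $-\theta^i\,\bdf\p_{\bdf}$ plus a b-vector field tangent to $\p\Mfd$, and these span $\bVF$ — so they may be used as the generating set $\bVFspan$ in \eqref{eq:Hsobnorm}; and the Lebesgue density equals $\jap{x}^{\Mdim+1}$ times a smooth positive density on $\Mfd$, since near $\p\Mfd$ one has $dx=r^{\Mdim-1}\,dr\,d\mathrm{vol}_{S^{\Mdim-1}}=\bdf^{-(\Mdim+1)}\,|d\bdf\wedge d\mathrm{vol}_{S^{\Mdim-1}}|$. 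Together with the fact that conjugation by $\jap{x}^{c}$ preserves bounded b-differential operators this gives the comparison for integer $s\ge0$, and the general case follows because the comparison is implemented by multiplication by the fixed weight $\jap{x}^{\delta}$, which commutes with the interpolation and duality defining both scales.

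The second ingredient is a \emph{basis comparison}: for $\omega\in\Omega^k$ the components of $\omega$ in the Cartesian basis $\{dx^{I}:|I|=k\}$ and its components in any local $C^\infty$-basis of relative $k$-forms differ by multiplication by $\jap{x}^{k+1}$ times a matrix-valued function on $\Mfd$ which is smooth and invertible with all b-derivatives bounded. This I would read off from the identity $\bdf^2\,dx^i=-\theta^i\,d\bdf+\bdf\,d\theta^i$ valid near $\p\Mfd$ — whence $dx^I$ is $\bdf^{-(k+1)}$ times a bounded smooth combination of the relative basis \eqref{eq:relativebasis} — together with its explicit inverse $d\bdf=-\sum_i\theta^i\bdf^2\,dx^i$, $\bdf\,d\theta^i=\bdf^2\sum_j(\delta_{ij}-\theta^i\theta^j)\,dx^j$, and their wedge powers; in the interior the change of basis is merely a smooth invertible one. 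Since the multiplication operators in question and their inverses are bounded between the pertinent weighted b-Sobolev spaces, combining with the scalar comparison yields, for $\omega\in\Omega^k_c(\R^{\Mdim})$ and all $s,\delta\in\R$, $\|\omega\|_{\jap{x}^{-\delta}\Hx^s(\R^{\Mdim})}\sim\|\omega\|_{\bdf^{\,\delta-k+(\Mdim-1)/2}\Hsob^s(\Mfd)}$.

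To conclude, set $\sigma:=\delta-k+\tfrac{\Mdim-1}{2}$, so that $\delta>k-\tfrac{\Mdim}{2}\iff\sigma>-\tfrac12$. By the last comparison the right side of \eqref{eq:estGxNEW} is $\sim\|\omega\|_{\bdf^{\sigma}\Hsob^{s}(\Mfd)}$, while applying the same comparison to $\Gfull\omega\in\Omega^{k-1}_c(\R^{\Mdim})$ with weight $\delta-1$ and order $s+1$ shows the left side is $\sim\|\Gfull\omega\|_{\bdf^{(\delta-1)-(k-1)+(\Mdim-1)/2}\Hsob^{s+1}(\Mfd)}=\|\Gfull\omega\|_{\bdf^{\sigma}\Hsob^{s+1}(\Mfd)}$. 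Thus \eqref{eq:estGxNEW} reduces to $\|\Gfull\omega\|_{\bdf^{\sigma}\Hsob^{s+1}(\Mfd)}\le C\,\|\omega\|_{\bdf^{\sigma}\Hsob^{s}(\Mfd)}$, which is exactly \eqref{eq:IntroestimatesG} at the weight $\sigma>-\tfrac12$. I expect the only real difficulty to be the bookkeeping — correctly tracking the degree-dependent factor $\jap{x}^{k+1}$ from the Cartesian-versus-relative basis change and the dimensional factor $\jap{x}^{(\Mdim+1)/2}$ from the measure, and checking boundedness of the multiplication operators on the weighted b-Sobolev scales — with no new analysis required beyond \eqref{eq:IntroestimatesG} and Lemma~\ref{lem:ComparabilityOfNorms}.
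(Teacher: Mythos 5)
Your proof is correct and follows essentially the same route as the paper: the paper's proof of Theorem~\ref{thm:Mainx}\ref{item:AnalysisFullx} likewise applies the b-Sobolev estimate on $\Rrad{\Mdim}$ (from \ref{item:AnalysisFull} and Lemma~\ref{lem:Mappingprop}, with threshold $\delta_0>-\tfrac12$) in the relative basis \eqref{eq:relbasNEW}, and then converts to the Cartesian $dx$-basis norms via Lemma~\ref{lem:ComparabilityOfNorms} with the same weight shift $\delta=\delta_0+k+1-\tfrac{\Mdim+1}{2}$. Your bookkeeping of the measure factor $\jap{x}^{(\Mdim+1)/2}$ and the degree-dependent factor $\jap{x}^{k+1}$ matches the paper's, so nothing further is needed.
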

See Theorem \ref{thm:Mainx} in Section \ref{sec:Rdcor}
for details, which one can read without reading
Sections \ref{sec:Preliminaries}, \ref{sec:collarnbhd}, \ref{sec:prfthm1}.
Specializing further to top degree forms, this chain homotopy
is a right inverse of 
the divergence operator on $\R^{\Mdim}$ for the Euclidean metric,
up to the usual integrability condition.
The mapping properties \eqref{eq:estGxNEW} of this inverse
are better than those of the 
Bogovskii inverse \eqref{eq:translaverage},
for which \eqref{eq:estGxNEW} fails, 
see Remark \ref{rem:CompareBog}.
In Appendix \ref{ap:div2},
this inverse is used to construct a right inverse of the divergence
operator on symmetric traceless matrices in three dimensions.
\step
{\bf Proof strategy.}
The construction of the chain homotopy $\Gfull$ in Theorem \ref{thm:main}
has essentially two parts. 
Near the boundary, 
it uses fiber integration familiar from 
the proof of the Poincar\'e lemma,
and averages it over suitable topological homotopies;
this average does not degenerate at the boundary, 
giving the optimal weighted estimates
(Section \ref{sec:collarnbhd} and \ref{sec:HE1}).
In the interior, the construction uses standard Hodge theory
(Section \ref{sec:HodgeNEW}).
The two parts are patched together 
by formulating and composing them as homotopy equivalences
(Section \ref{sec:proof}).
\step
{\bf Acknowledgements.}
I thank Rafe Mazzeo for many discussions,
and for very useful comments on drafts of this article.
I was supported by the Swiss National Science Foundation,
project number P500PT-214470.

\section{Preliminaries}\label{sec:Preliminaries}

\subsection{Homotopy equivalences}\label{sec:HomotopyEquivalence}
In this purely algebraic section we recall homotopy equivalences.
\step
A complex $(C,\dd)$ is a $\Z$-graded real vector space $C=\oplus_{k\in\Z}C^k$ together with a differential $\dd$, 
which is a linear map $C\to C$ of degree one with $\dd^2=0$,
where degree one means that it maps $C^k\to C^{k+1}$ for every $k$.
A homotopy equivalence between two complexes 
$(C,\dd)$ and $(C',\dd')$ 
is a four-tuple of linear maps 
\begin{equation}\label{eq:HE}
\big(\,r:C\to C',\,\ell:C'\to C,\,u:C\to C,\,v:C'\to C'\,\big)
\end{equation}
where $r,\ell$ have degree zero
and $u,v$ have degree negative one,
such that:
\begin{align}\label{eq:HEcond}
\begin{aligned}
&\text{$\ell$ and $r$ are chain maps}\\
&\one-\ell r = \dd u+u\dd\\
&\one-r\ell = \dd'v+v\dd'
\end{aligned}
\end{align}
Pictorially, one has a non-commutative diagram
\begin{equation*}
\begin{tikzpicture}[baseline=(current  bounding  box.center)]
  \matrix (m) [matrix of math nodes, column sep = 28mm, minimum width = 4mm ]
  {
    (C,\dd) & (C',\dd') \\
  };
  \path[-stealth]
    (m-1-1) edge [transform canvas={yshift=1.5mm}] node [above] {$r$ \footnotesize chain map} (m-1-2)
            edge [out=180+25,in=180-25,min distance=8mm] node
                 [left,xshift=-1mm] {
                 $u$ \footnotesize homotopy 
                 } (m-1-1)
    (m-1-2) edge [transform canvas={yshift=-1.5mm}] node [below] {$\ell$ \footnotesize chain map} (m-1-1)
            edge [out=-25,in=25,min distance=8mm] node
                 [right,xshift=1mm
                 ] {
                 $v$ \footnotesize homotopy
                 } (m-1-2);
\end{tikzpicture}
\end{equation*}
\newadd{Note that $\ell$ and $r$ induce isomorphisms
in homology that are inverse to each other. In particular
$(C,\dd)$ and $(C',\dd')$ have isomorphic homologies.}

A contraction is a special instance of a homotopy equivalence.
Namely, a contraction from $(C,\dd)$ to $(C',\dd')$ is a triple of maps
\begin{equation}\label{eq:contraction}
\big(\,r,\,\ell,\,u\,\big)
\end{equation}
such that $(r,\ell,u,0)$ is a homotopy equivalence
between $(C,\dd)$ and $(C',\dd')$.
Note that in this case $\ell r$ is a projection, $(\ell r)^2=\ell r$,
\newadd{and $\dd' = r \dd \ell$}.
\begin{lemma}[Composing homotopy equivalences]\label{lem:compHE}
Consider three complexes
$$(C,\dd)\qquad(C',\dd')\qquad(C'',\dd'')$$
Let $(r_1,\ell_1,u_1,v_1)$
be a homotopy equivalence between $(C,\dd)$ and $(C',\dd')$,
and let $(r_2,\ell_2,u_2,v_2)$
be a homotopy equivalence between $(C',\dd')$ and $(C'',\dd'')$.
Then 
\begin{align*}
\big(\ 
r_3=r_2r_1\,,\ 
\ell_3=\ell_1\ell_2\,,\ 
u_3=u_1 + \ell_1 u_2 r_1\,,\ 
v_3=v_2 + r_2 v_1 \ell_2\ 
\big)
\end{align*}
is a homotopy equivalence between $(C,\dd)$ and $(C'',\dd'')$.
Furthermore, if $\dd''=0$ then 
$(r_3,\ell_3,u_3)$ is a contraction from $(C,\dd)$ to $(C'',0)$.
\end{lemma}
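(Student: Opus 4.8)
The plan is to verify the three conditions in \eqref{eq:HEcond} directly for the tuple $(r_3,\ell_3,u_3,v_3)$, exploiting the corresponding conditions for the two given homotopy equivalences. The fact that $r_3$ and $\ell_3$ are chain maps is immediate since a composition of chain maps is a chain map: $r_3\dd = r_2r_1\dd = r_2\dd'r_1 = \dd''r_2r_1 = \dd''r_3$, and symmetrically for $\ell_3$. The substance is in the two homotopy identities.

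First I would compute $\one - \ell_3 r_3 = \one - \ell_1\ell_2 r_2 r_1$. The trick is to insert $\ell_1 r_1$ in the middle: write $\one - \ell_1\ell_2 r_2 r_1 = (\one - \ell_1 r_1) + \ell_1(\one - \ell_2 r_2)r_1$. The first summand equals $\dd u_1 + u_1\dd$ by the hypothesis on the first equivalence. For the second, substitute $\one - \ell_2 r_2 = \dd'v_2' \dots$ — wait, more carefully: the second equivalence gives $\one - \ell_2 r_2 = \dd' u_2 + u_2\dd'$ (here $\ell_2 r_2$ is the endomorphism of $C'$). So $\ell_1(\one - \ell_2 r_2)r_1 = \ell_1\dd' u_2 r_1 + \ell_1 u_2 \dd' r_1$. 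Now push $\dd'$ through the chain maps $\ell_1$ and $r_1$: $\ell_1\dd' = \dd\ell_1$ and $\dd' r_1 = r_1\dd$. This yields $\dd(\ell_1 u_2 r_1) + (\ell_1 u_2 r_1)\dd$. Adding the two pieces gives exactly $\dd u_3 + u_3\dd$ with $u_3 = u_1 + \ell_1 u_2 r_1$. The identity $\one - r_3\ell_3 = \dd''v_3 + v_3\dd''$ is proved by the mirror-image computation: insert $r_2\ell_2$ in the middle of $\one - r_2 r_1\ell_1\ell_2$, use $\one - r_1\ell_1 = \dd'v_1 + v_1\dd'$ for the inner term, and commute $\dd'$ through $r_2$ and $\ell_2$ using that they are chain maps, obtaining $v_3 = v_2 + r_2 v_1\ell_2$.

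For the final clause, suppose $\dd'' = 0$. Then in particular the identity $\one - r_3\ell_3 = \dd''v_3 + v_3\dd'' = 0$ holds automatically, so $r_3\ell_3 = \one$ on $C''$ regardless of what $v_3$ is; hence $(r_3,\ell_3,u_3,0)$ is still a homotopy equivalence between $(C,\dd)$ and $(C'',0)$, which is by definition a contraction. One can also note $\ell_3 r_3$ is then a projection: $(\ell_3 r_3)^2 = \ell_3(r_3\ell_3)r_3 = \ell_3 r_3$.

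There is no real obstacle here — the only thing to watch is bookkeeping of which identity map and which $\dd$ lives on which complex, and being careful that the chain-map relations are applied in the correct direction when commuting differentials past $r_i$ and $\ell_i$. The one genuinely substantive choice is the \emph{guess} of the formulas for $u_3$ and $v_3$; once guessed, verification is the routine telescoping computation sketched above, so I would present the two computations for the homotopy identities in full and leave the chain-map checks and the $\dd''=0$ case as one-line remarks.
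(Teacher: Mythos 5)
Your proposal is correct, and it is exactly the ``direct computation'' the paper invokes without spelling out: insert $\ell_1 r_1$ (resp. $r_2\ell_2$) into $\one - \ell_3 r_3$ (resp. $\one - r_3\ell_3$), telescope, and pass the differential through the chain maps to collect the stated $u_3$ and $v_3$. The handling of the $\dd''=0$ case — deriving $r_3\ell_3 = \one$ from the already-established identity with $v_3$, so that $(r_3,\ell_3,u_3,0)$ is again a homotopy equivalence — is also the right way to read the paper's definition of a contraction.
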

\begin{proof}
The properties \eqref{eq:HEcond} hold
by direct computation.\qed
\end{proof}
\final{Note that composition of homotopy equivalences is associative.}
\step
\newcommand{\newu}{\slashed{u}}
\newcommand{\newr}{\slashed{r}}
\newcommand{\newell}{\slashed{\ell}}
We recall the homological perturbation lemma,
which will be used only in Appendix \ref{ap:div2}.
A contraction $(r,\ell,u)$ from $(C,\dd)$ to $(C',\dd')$
that also satisfies 
\begin{align}\label{eq:sidecond}
ru=0 && u\ell=0 && u^2=0
\end{align}
is called a contraction with side conditions
(or special deformation retract \cite{crainic}).
\begin{lemma}[Homological perturbation lemma]\label{lem:HPL}
Let $(r,\ell,u)$ be a contraction with side conditions
from $(C,\dd)$ to $(C',\dd')$. Let $\deltaHPL:C\to C$ be a linear map of degree one
such that $(\dd+\deltaHPL)^2=0$, and such that $\deltaHPL u$ is nilpotent.
Then 
$$\big(\,
\newr = r(\one + \deltaHPL u)^{-1},\ 
\newell = (\one + u\deltaHPL)^{-1} \ell,\ 
\newu = u(\one + \deltaHPL u)^{-1}\,
\big)$$
is a contraction with side conditions from $(C,\dd+\deltaHPL)$ to $(C',\newr(\dd+\deltaHPL)\newell)$.
\end{lemma}
\begin{proof}
See \cite[Section 3.2]{crainic}.\qed
\end{proof}
\begin{remark}\label{rem:sideconditions}
Every contraction can be transformed into a contraction with
side conditions \cite[Remark 2.3(i)]{crainic}. Namely, 
let $(r,\ell,u)$ be a contraction from $(C,\dd)$ to $(C',\dd')$.
Set $u_1= (1-\ell r)u(1-\ell r)$ and $u_2 = u_1 \dd u_1$.
Then $(r,\ell,u_2)$ is a contraction with side conditions from $(C,\dd)$ to $(C,\dd')$.
%
%
%
%
%
%
\end{remark}

\subsection{b-pseudodifferential operators}\label{sec:bpsdo}
We recall basic notions of the calculus of b-pseudodifferential operators
of Melrose, see \cite{MelroseGreenBook,Mazzeo91,Hoermander3},
where we follow the conventions in \cite{Mazzeo91}.
We will only need a subspace of the full calculus,
given by Schwartz kernels that are smooth 
at the boundary faces.
All manifolds are assumed to be smooth and connected.
\step
\textbf{Distributions.}
Let $\XMfd$ be a compact manifold with corners.
We denote by $\Cvan^\infty(\XMfd)$ the space of smooth
real-valued
functions that vanish to infinite order at the boundary.
For $s\in\R$ let $\dens^s\to\XMfd$ be the $s$-density bundle\footnote{%
We use the standard convention that $1$-densities can be integrated.}.
The space of (extendible) distributional $s$-densities $\distr(\XMfd,\dens^s)$
is given by all continuous linear maps 
$\Cvan^\infty(\XMfd,\dens^{1-s}) \to \R$,
where the inclusion $C^\infty(\XMfd,\dens^s)\hookrightarrow\distr(\XMfd,\dens^s)$
is given by integration.
We abbreviate $\distr(\XMfd,\dens^0)=\distr(\XMfd)$.
Similarly, for $E\to\XMfd$ any smooth vector bundle,
the space of distributional sections $\distr(\XMfd,E)$
is given by all continuous linear maps 
$\Cvan^\infty(\XMfd,E^* \otimes \dens^{1}) \to \R$,
with $E^*$ the dual bundle.
\step
{\bf b-double space.}
Let $\Mfd$ be a compact manifold with boundary 
that has $\nBC\ge1$ boundary components 
$\p\Mfd=\MfdBC_1\sqcup\dots\sqcup\MfdBC_{\nBC}$.
The product $\MMfd=\Mfd\times\Mfd$ is a manifold with corners.
Its boundary hypersurfaces are $\MfdBC_i \times \Mfd$
and $\Mfd \times \MfdBC_i$, and its corners are $\MfdBC_i\times\MfdBC_j$
for $i,j=1\dots\nBC$.
The diagonal is 
$\diag(\Mfd) = \{ (p,p) \in \Mfd^2 \}$
and intersects the boundary in the corners
\begin{equation*}
B = \cup_{i=1}^{\nBC} (\MfdBC_i\times\MfdBC_i)
\end{equation*}
This intersection is not transversal, 
which motivates introducing the b-double space $\bMfd$
given by the 
blowup of $B$ in $\MMfd$,
\begin{equation}
\label{eq:defbMfd}
\bMfd=[\MMfd,B] \qquad\qquad \blowdown:\bMfd\to\MMfd
\end{equation} 
where $\blowdown$ is the blowdown map,
see Figure \ref{fig:M2boundary}.
\begin{figure}%
\centering
\begin{tikzpicture}
\begin{scope}
\draw[very thick] (1,0) -- (2,0) node[midway,anchor=north] 
{\footnotesize $\brf{i}$};
\draw[very thick] (0,1) -- (0,2) node[midway,anchor=east] 
{\footnotesize $\blf{i}$};
\draw[thick,densely dashed] (0.7,0.7) -- (1.7,1.7) node[midway,anchor=north west,xshift=-1mm,yshift=1mm] 
{\footnotesize $\bdiag$};
\draw[-,very thick,blue] (1,0) arc (0:90:1) node[black,midway,anchor=north east] {\footnotesize $\bff{i}$};
\end{scope}
\begin{scope}[xshift=3cm]
\draw[very thick,-{Latex[length=1.8mm,width=1.8mm]}] (1,1) -- (2,1) node[midway,anchor=south east,,xshift=2.5mm] {\footnotesize $\blowdown$};
\end{scope}
\begin{scope}[xshift=7.5cm]
\draw[very thick] (0,0) -- (2,0) node[xshift=-9mm,yshift=-3mm] 
{\footnotesize $\Mfd\times\MfdBC_i$};
\draw[very thick] (0,0) -- (0,2) node[xshift=-7mm,yshift=-8mm] 
{\footnotesize $\MfdBC_i\times\Mfd$};
\draw[thick,densely dashed] (0,0) -- (1.7,1.7) node[midway,anchor=north west,xshift=2.5mm,yshift=4.5mm] 
{\footnotesize $\diag(\Mfd)$};
\draw[color=blue,fill=blue] (0,0) circle (0.7mm) node[anchor=north east,black] {\footnotesize $\MfdBC_i^2$};
\end{scope}
\end{tikzpicture}
\captionsetup{width=115mm}
\caption{%
On the right is a neighborhood of the corner
$\MfdBC_i^2 \subset\Mfd^2$,
which is locally given by $[0,1)^2\times \MfdBC_i^2$,
the factor $\MfdBC_i^2$ is suppressed in the figure.
On the left is a neighborhood of $\bff{i}\subset\bMfd$,
which is locally given by the blowup of $(0,0)$ in $[0,1)^2$, times $\MfdBC_i^2$.}
\label{fig:M2boundary}
\end{figure}%
This is again a manifold with corners,
its boundary hypersurfaces are:
\begin{align}\label{eq:facesdef}
\begin{aligned}
\bff{i} &= \overline{\blowdown^{-1}(\MfdBC_i \times \MfdBC_i)}\\
\blf{i} &= \overline{\blowdown^{-1}(\MfdBC_i \times \Mfd)\setminus \bff{i}}\\
\brf{i} &= \overline{\blowdown^{-1}(\Mfd \times \MfdBC_i)\setminus \bff{i}}
\end{aligned}
\end{align}
We denote 
$\bff{}=\cup_{i=1}^{\nBC}\bff{i}$,
$\blf{}=\cup_{i=1}^{\nBC}\blf{i}$,
$\brf{}=\cup_{i=1}^{\nBC}\brf{i}$
and refer to them as the 
front face, left face, right face respectively.
The lift of the diagonal is the b-diagonal
\begin{align*}
\bdiag &= 
\overline{\blowdown^{-1}(\diag(\Mfd))\setminus \bff{}}
\end{align*}
and intersects the boundary transversally in the front face. 
In particular it is an embedded closed $\dim\Mfd$-dimensional
submanifold of $\bMfd$.
\begin{remark}[Local model for $\bMfd$] \label{rem:blowup}
After choosing a collar neighborhood $[0,1)\times\MfdBC_i$
of $\MfdBC_i$, the b-double space $\bMfd$ is, locally around $\bff{i}$, given by
\[ 
[[0,1)^2,(0,0)]\times \MfdBC_i^2
\]
The first factor is the blowup of $(0,0)$ in $[0,1)^2$, given by
$\{(r,\theta)\in [0,\infty)\times[0,\tfrac{\pi}{2}]\mid 
r< 1/{\max}\{\sin\theta,\cos\theta\} \}$
with blowdown map 
$(r,\theta) \mapsto (r\cos\theta,r\sin\theta)$.
Then 
$\bff{i}$, $\blf{i}$, $\brf{i}$
are locally given by
$r=0$, $\theta=\tfrac\pi2$, $\theta=0$
respectively.
It is often useful to work with projective coordinates.
Away from $\blf{i}$ these are $(t,u)$ with $u=t'/t$,
where $(t,t')$ are the coordinates on $[0,1)^2$;
then $t$ and $u$ are boundary defining functions for 
$\bff{i}$ respectively $\brf{i}$.
Away from $\brf{i}$ these are $(v,t')$ with $v=t/t'$.
\end{remark}

\textbf{Classical conormal distributions.}
For $m\in\R\cup\{-\infty\}$ let
\begin{equation}\label{eq:conormaldef}
\bconormal^m(\bMfd,\bdiag)
\subset \distr(\bMfd)
\end{equation}
be the space of classical (or one step) conormal distributions 
of order $m$ along $\bdiag$.
This is defined invariantly in \cite[Chapter 18.2]{Hoermander3}.
Here we give a characterization in terms of local coordinates.
A distribution $\mu\in \distr(\bMfd)$
is in \eqref{eq:conormaldef} if and only if
$\mu$ is smooth on $\bMfd\setminus\bdiag$
and for every $p\in\bdiag$ there exist:
\begin{itemize}
\item 
An open neighborhood $U$ of $p$
with smooth coordinates $(x,z)$ that range over $B\cap (H\times\R^{\Mdim})$ where 
$B=\{(x,z)\in\R^{\Mdim}\times\R^{\Mdim}\mid \|x\|,\|z\|<1 \}$ and
\begin{itemize}
\item 
$H = \R^{\Mdim}$ when $p\notin\bff{}$.
\item 
$H = \{x\in\R^{\Mdim}\mid x^1\ge0\}$ when $p\in\bff{}$.
\end{itemize}
and such that $p$ corresponds to $x=z=0$
and $U\cap\bdiag=\{z=0\}$.
\item 
A function $\cutoff\in C^\infty(\bMfd)$ with support in $U$
and with $\cutoff=1$ around $p$.
\item 
A classical symbol $a\in \Symb^{m}(\R^{\Mdim};\R^{\Mdim})$,
see \cite[Definition 3.11]{hintzmicro}.
\end{itemize}
such that, as compactly supported distributions on $H\times\R^{\Mdim}$,  
\begin{equation*}
\cutoff\mu =
\textstyle \int_{\R^{\Mdim}} a(x,\xi)e^{i z\cdot \xi} \,d\xi
\end{equation*}
where on the left hand side $\cutoff\mu$ is extended by zero to $H\times\R^{\Mdim}$,
and on the right hand side we use the 
distributional inverse Fourier transform in $\xi$.

For $E\to\bMfd$ a vector bundle the space $\bconormal^m(\bMfd,\bdiag;E)
\subset \distr(\bMfd,E)$ is characterized analogously, using local trivializations of $E$.
\step

We recall a simple sufficient condition for a distribution to
be classical conormal, which will be enough for our purpose.
Consider the blowup
\begin{equation}\label{eq:bMfddiagBU}
[\bMfd,\bdiag] \qquad \blowdowndiag:[\bMfd,\bdiag]\to\bMfd
\end{equation}
where $\blowdowndiag$ is the blowdown map.
Note that 
\begin{equation}\label{eq:liftdiag}
\blowdowndiag^{-1}(\bdiag)\subset [\bMfd,\bdiag]
\end{equation}
is an embedded closed codimension one submanifold.
\begin{lemma}\label{lem:KernelDiagonal}
Let $\bdfdiag$ be a boundary defining function for \eqref{eq:liftdiag}.
For all $\mu \in \distr(\bMfd)$ 
and $m\in\Z$ with $m\le \dim\Mfd-1$
one has
\[ 
\bdfdiag^{m}\blowdowndiag^*\mu \in C^\infty([\bMfd,\bdiag])
\quad\Rightarrow\quad
\mu\in \bconormal^{m-\dim\Mfd}(\bMfd,\bdiag)
\]
An analogous statement holds for distributional sections of
bundles on $\bMfd$.
\end{lemma}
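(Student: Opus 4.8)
The plan is to reduce to the local characterization of classical conormal distributions given just above, namely to produce, near each point $p\in\bdiag$, coordinates $(x,z)$ and a classical symbol $a\in\Symb^{m-\dim\Mfd}(\R^{\Mdim};\R^{\Mdim})$ with $\cutoff\mu=\int a(x,\xi)e^{iz\cdot\xi}\,d\xi$. The key observation is that the hypothesis says precisely that $\mu$, pulled back to the blowup $[\bMfd,\bdiag]$ and multiplied by $\bdfdiag^m$, is smooth; since all boundary defining functions for \eqref{eq:liftdiag} differ by a positive smooth factor, this is equivalent to saying that in the local coordinates $(x,z)$ with $U\cap\bdiag=\{z=0\}$, after introducing polar coordinates $z=r\omega$ with $r=\|z\|$ and $\omega\in S^{\Mdim-1}$ (so $r$ is a local boundary defining function for the lifted diagonal), the distribution $r^m\mu$ is a smooth function of $(x,r,\omega)$ down to $r=0$. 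Equivalently, $\mu = r^{-m}g(x,r\omega)$ for some $g\in C^\infty(U\times S^{\Mdim-1})$ near $r=0$, jointly smooth.

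Next I would compute the Fourier transform of such a model distribution. For fixed $x$, a function of the form $\|z\|^{-m}g(x,z/\|z\|)$ near $z=0$, cut off by $\cutoff$, is (modulo a smooth compactly supported error, whose Fourier transform is Schwartz and hence a symbol of order $-\infty$) homogeneous of degree $-m$ in $z$. Its Fourier transform is homogeneous of degree $-(\dim\Mfd-m)=m-\dim\Mfd$ in $\xi$; more precisely, the Fourier transform of a homogeneous distribution of degree $-m$ on $\R^{\Mdim}$ that is smooth away from the origin is, for $m\le\dim\Mfd-1$ an integer so that no logarithmic terms appear, a classical symbol of order $m-\dim\Mfd$ with smooth dependence on the parameter $x$ (using that $g$ is jointly smooth in $(x,\omega)$). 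The restriction $m\le\dim\Mfd-1$ is exactly what guarantees $m-\dim\Mfd\le-1$, so there is no delta-function contribution at $z=0$ from the inverse transform and no need to worry about polynomial corrections; this is where that hypothesis is used. The upshot is $\cutoff\mu=\int a(x,\xi)e^{iz\cdot\xi}\,d\xi$ with $a\in\Symb^{m-\dim\Mfd}$, which is the required local form.

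Finally, the case $p\in\bff{}$ only changes the coordinate range for $x$ from $\R^{\Mdim}$ to the half-space $\{x^1\ge0\}$; the $z$-variables and the conormal direction are unaffected, and the smoothness of $g$ up to $x^1=0$ is part of the hypothesis (smoothness on $[\bMfd,\bdiag]$, a manifold with corners, includes smoothness up to the lifts of the faces $\bff{i},\blf{i},\brf{i}$), so the same computation applies verbatim with $x$ ranging over the half-space. The extension to sections of a bundle $E\to\bMfd$ is immediate by working in a local trivialization, since the whole argument is componentwise. The main obstacle is the middle step: cleanly identifying the Fourier transform of a parameter-dependent, not-necessarily-rotationally-invariant homogeneous distribution $\|z\|^{-m}g(x,z/\|z\|)$ as a classical symbol in $\xi$ with smooth $x$-dependence, and checking that no log terms or lower-order anomalies arise under the integrality and range assumption $m\in\Z$, $m\le\dim\Mfd-1$. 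This is classical (cf.\ \cite[Chapter 18.2]{Hoermander3} and the theory of the Fourier transform of homogeneous distributions) but requires care to state with the parameter $x$ and the half-space present.
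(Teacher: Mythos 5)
Your approach is essentially the paper's: the paper's proof consists of choosing local coordinates and invoking standard facts about Fourier transforms of polyhomogeneous distributions (it cites \cite[Corollary 2.26]{Hintz2302}), which is exactly what you carry out in expanded form. One step is stated too loosely, though: writing $r^m\blowdowndiag^*\mu$ as a smooth function $G(x,r,\omega)$ on the blowup, the distribution $\|z\|^{-m}G(x,r,\omega)$ is \emph{not} homogeneous of degree $-m$ in $z$ modulo a smooth compactly supported error unless $G$ is independent of $r$; the difference $\|z\|^{-m}\bigl(G(x,r,\omega)-G(x,0,\omega)\bigr)$ is only $O(r^{1-m})$, i.e.\ less singular but still singular. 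The standard fix is to Taylor-expand $G$ at $r=0$, so that $\mu$ is an asymptotic sum of terms homogeneous of degrees $-m,-m+1,\dots$ plus remainders of increasing regularity; the Fourier transforms of these terms then assemble into the full one-step polyhomogeneous expansion of a classical symbol of order $m-\dim\Mfd$, with smooth dependence on the parameter $x$ (including up to $x^1=0$ at the front face). With that routine correction your argument is complete and matches the paper's.
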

\begin{proof}
Upon choosing local coordinates
this follows from elementary properties of the Fourier transform,
see for example \cite[Corollary 2.26]{Hintz2302}.\qed
\end{proof}
\textbf{b-pseudodifferential operators.}
Let $\piL,\piR:\Mfd^2\to\Mfd$ be the projections
onto the left respectively right factor.
Let $E,F\to \Mfd$ be smooth vector bundles on $\Mfd$.
Recall that the Schwartz kernel of a continuous linear map 
$C^\infty(\Mfd,E) \to C^\infty(\Mfd,F)$ is a distribution in\footnote{
Explicitly, the Schwartz kernel $K_A$ of a map $A$ 
is the unique distribution that satisfies 
$\langle A(e), f\rangle 
=
\langle
K_A, 
\pi_{R}^*e \otimes \pi_{L}^* f 
\rangle$ for all 
$e\in C^\infty(\Mfd,E)$ and 
$f\in \Cvan^\infty(\Mfd,F^*\otimes \dens^1)$.} 
\begin{equation}\label{eq:kerspace1}
\distr\big(\Mfd^2, \Hom(E,F)\otimes \pi_R^*\dens^1\big)
\end{equation}
where the bundle $\Hom(E,F)$ has fiber $\Hom_{\R}(E_{p'},F_{p})$ at $(p,p')\in\Mfd^2$, 
and where the bundle $\pi_R^*\dens^1$ 
is the pullback of the one-density bundle on $\Mfd$
along $\pi_R$.
Using $\piL^*\Omega^{\frac12}\otimes\piR^*\Omega^{\frac12}\simeq\Omega^{\frac12}$ on $\Mfd^2$,
this space is canonically isomorphic to 
\begin{equation}\label{eq:kerspace2}
\distr\big(\Mfd^2, \KB(E,F) \otimes \dens^{\frac12}\big)
\end{equation}
where we abbreviate $\KB(E,F) = \Hom(E\otimes\dens^{-\frac12},F\otimes\dens^{-\frac12})$.
The pullback along the blowdown map $\blowdown:\bMfd\to\Mfd^2$ defines a map 
from \eqref{eq:kerspace2} to 
\begin{equation}\label{eq:blowupbundle}
\distr\big(\bMfd, 
\blowdown^*(\KB(E,F))
\otimes \dens^{\frac12}\big)
\end{equation}
that we also denote by $\blowdownDistr^*$.
Beware that this map is defined using two distinct pullbacks,
the pullback of sections of $\KB(E,F)$
to sections of $\blowdown^*(\KB(E,F))$,
and the pullback of densities on $\Mfd^2$ to densities on $\bMfd$.

For a triple $\ind{}=(\ind{\bff{}},\ind{\blf{}},\ind{\brf{}})$ with 
$\ind{\bff{}},\ind{\blf{}},\ind{\brf{}}\in \Nzero\cup\{\emptyset\}$
define the subspace\footnote{%
Beware that some references use different orderings of the indices.
In particular, in \cite{Mazzeo91} the indices are ordered
as in $(\ind{\blf{}},\ind{\brf{}},\ind{\bff{}})$,
the convention used here is more standard now.}
\begin{equation}\label{eq:Alfrfffdef}
\A^{\ind{}}(\bMfd)
\;\subset\;
C^\infty(\bMfd)
\end{equation}
given by all smooth functions on $\bMfd$ that 
vanish to order $\ind{\bff{}}$, $\ind{\blf{}}$, $\ind{\brf{}}$
at the front face, left face, right face  respectively,
where vanishing to order $\emptyset$ means vanishing to infinite order.
In other words, if $\ind{\bff{}},\ind{\blf{}},\ind{\brf{}}\in \Nzero$
then 
\begin{equation}\label{eq:AlfrfffdefIntegers}
\A^{\ind{}}(\bMfd)
\;=\;
\bdf_{\bff{}}^{\ind{\bff{}}}
\bdf_{\blf{}}^{\ind{\blf{}}}
\bdf_{\brf{}}^{\ind{\brf{}}}
 C^\infty(\bMfd)
\end{equation}
where $\bdf_{\bff{}},\bdf_{\blf{}},\bdf_{\brf{}}$
are boundary defining functions for 
$\bff{}$, $\blf{}$, $\brf{}$ respectively.
If one or more of $\ind{\bff{}},\ind{\blf{}},\ind{\brf{}}$ are
equal to $\emptyset$ then \eqref{eq:Alfrfffdef} is given by
taking the intersection of \eqref{eq:AlfrfffdefIntegers} over 
all integers, as in, for example,
\[ 
\A^{(\emptyset,\ind{\blf{}},\ind{\brf{}})}(\bMfd) 
\;=\; \bigcap_{\ell\in\Nzero}
\A^{(\ell,\ind{\blf{}},\ind{\brf{}})}(\bMfd)
\]
\begin{definition}\label{def:bpsdo}
The space of b-pseudodifferential operators
of order $m\in\R\cup\{-\infty\}$ and index set 
$\ind{}=(\ind{\bff{}},\ind{\blf{}},\ind{\brf{}})
\in (\Nzero\cup\{\emptyset\})^3$,
denoted by
\begin{align}\label{eq:bPsdodef}
\bPsdo^{m,\ind{}}(\Mfd;E,F) 
\end{align}
is given by all continuous linear maps 
$A: C^\infty(\Mfd,E) \to C^\infty(\Mfd,F)$
such that
\begin{equation}\label{eq:bpsdodef}
\blowdown^* K_A \;\in\; 
\tfrac{1}{\sqrt{\bdf_{\bff{}}}}\,\A^{\ind{}}(\bMfd)\,
\bconormal^m\Big(\bMfd,\bdiag;\,\blowdown^*(\KB(E,F))\otimes \dens^{\frac12}\Big)
\end{equation}
where $K_A$ is the Schwartz kernel of $A$,
and $\blowdown^* K_A$ its pullback to \eqref{eq:blowupbundle},
and where $\bdf_{\bff{}}$ is a boundary defining function for the front face.
Furthermore define 
\begin{equation}\label{eq:respsdo}
\Psdo^{m,(\ind{\blf{}},\ind{\brf{}})}(\Mfd;E,F)
\;=\;
\bPsdo^{m,(\emptyset,\ind{\blf{}},\ind{\brf{}})}(\Mfd;E,F)
\end{equation}
\end{definition}
b-pseudodifferential operators 
are also called totally characteristic operators.
The operators in Definition \ref{def:bpsdo} are only a subspace of 
the full calculus of Melrose in \cite{Mazzeo91,MelroseGreenBook},
which also contains operators with more general index sets
that allow certain singular behavior at the boundary faces.
Further, in \eqref{eq:bpsdodef} we only allow
conormal distributions that are classical,
which suffices for our purpose.
{The Schwartz kernels of the operators \eqref{eq:respsdo} 
vanish to infinite order at the front face, and hence they
have a simple behavior already on the blowdown space $\Mfd^2$.}
We abbreviate
$\bPsdo^{m,\ind{}}(\Mfd;E)=\bPsdo^{m,\ind{}}(\Mfd;E,E)$,
and 
$\bPsdo^{m,\ind{}}(\Mfd)=\bPsdo^{m,\ind{}}(\Mfd;\Mfd\times\R)$
where $\Mfd\times\R$ is the trivial bundle of rank one.
\step
We give basic examples of b-pseudodifferential operators
\cite[Eq.~3.2-3.3]{Mazzeo91}.
Multiplication by a smooth function is in 
$\bPsdo^{0,\order{(0,\emptyset,\emptyset)}}(\Mfd)$.
%
Differentiation with respect to the b-vector fields
$\bVF$ (see  \eqref{eq:bvfbasis})
is in $\bPsdo^{1,\order{(0,\emptyset,\emptyset)}}(\Mfd)$.
More generally, define 
a first order b-differential operator $A:C^\infty(\Mfd,E)\to C^\infty(\Mfd,F)$ to be any first order differential operator 
that satisfies
\begin{equation}\label{eq:firstorderb}
A(\bdf C^\infty(\Mfd,E))\subset \bdf C^\infty(\Mfd,F)
\end{equation}
with $\bdf$ a boundary defining function for $\p\Mfd$.
This is equivalent to the more standard definition
that requires that, in a local trivialization, the principal part of 
$A$ be given by b-vector fields%
\footnote{%
Recall that $\bVF$ is given by all vector fields that are
tangential to $\p\Mfd$. The claim follows from the
fact that $\bVF$ is equivalently given by 
all vector fields $V$ with
$V(\bdf C^\infty(\Mfd))\subset \bdf C^\infty(\Mfd)$.}.
For such $A$ one has
\begin{equation}\label{eq:bvpsdo}
A \in \bPsdo^{1,\order{(0,\emptyset,\emptyset)}}(\Mfd;E,F)
\end{equation}
\begin{lemma}\label{lem:ddRNEW}
The de Rham differential 
$\ddR:\Omega(\Mfd,\p\Mfd)\to\Omega(\Mfd,\p\Mfd)$
is a first order b-differential operator, in particular
$\ddR 
\in
\bPsdo^{1,\order{(0,\emptyset,\emptyset)}}(\Mfd;\VBrel{}{\Mfd}{\p\Mfd})$.
\end{lemma}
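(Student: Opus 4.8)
The plan is to verify the two stated claims in turn: first that $\ddR$ on relative forms is a first order b-differential operator in the sense defined just above the lemma (i.e.\ it maps $\bdf\,C^\infty$ into $\bdf\,C^\infty$), and second that the membership \eqref{eq:bvpsdo} then applies verbatim. The second claim is immediate once the first is established, since the excerpt already states that any first order b-differential operator lies in $\bPsdo^{1,\order{(0,\emptyset,\emptyset)}}(\Mfd;E,F)$, with $E=F=\VBrel{}{\Mfd}{\p\Mfd}$ the bundle of relative forms from Swan's theorem. So the content is entirely in the first claim, which is a purely local computation near $\p\Mfd$.

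For the local computation I would work in collar coordinates $(\bdf,y^1,\dots,y^{\Mdim-1})$ around a boundary point, with $\bdf$ a boundary defining function and the $y^j$ restricting to coordinates on $\p\Mfd$. Using the local $C^\infty$-basis \eqref{eq:relativebasis} for relative forms, a general relative $k$-form is
\[
\omega \;=\; \sum_{I} f_I\, d\bdf\wedge dy^I \;+\; \sum_{J} g_J\, \bdf\, dy^J
\]
with $f_I,g_J\in C^\infty$ and $I,J$ multi-indices in $\{1,\dots,\Mdim-1\}$ of lengths $k-1$ and $k$ respectively. The point is simply to apply $\ddR$ and re-expand the result in the same basis. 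The $d\bdf$-free pieces: $\ddR(g_J\bdf\,dy^J) = (\bdf\,\p_{\bdf}g_J + g_J)\,d\bdf\wedge\cdots$ — wait, more carefully, $d(g_J\bdf) = \bdf\,\p_{\bdf}g_J\cdot\tfrac{d\bdf}{\bdf}\cdot\bdf + \dots$; the clean way is to note $d(g_J\bdf) = (g_J + \bdf\,\p_\bdf g_J)\,d\bdf + \bdf\sum_j \p_{y^j}g_J\,dy^j$, so $\ddR(g_J\bdf\,dy^J) = (g_J+\bdf\p_\bdf g_J)\,d\bdf\wedge dy^J \pm \bdf\sum_j \p_{y^j}g_J\,dy^j\wedge dy^J$, and both coefficients are smooth. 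For the $d\bdf$-terms, $\ddR(f_I\,d\bdf\wedge dy^I) = \sum_j \p_{y^j}f_I\,dy^j\wedge d\bdf\wedge dy^I = -d\bdf\wedge\big(\sum_j\p_{y^j}f_I\,dy^j\wedge dy^I\big)$, again with smooth coefficients and no $\bdf\p_\bdf$ even appearing. Collecting terms one reads off that in the basis \eqref{eq:relativebasis} the operator $\ddR$ only ever differentiates along $\p_{y^1},\dots,\p_{y^{\Mdim-1}}$ and $\bdf\p_\bdf$ — precisely the b-vector fields \eqref{eq:bvfbasis} — with no undifferentiated $\bdf^{-1}$ factors produced; hence $\ddR$ preserves $\bdf\,C^\infty$ componentwise, so it is a first order b-differential operator. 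Away from $\p\Mfd$ there is nothing to check since $\ddR$ is a first order differential operator there.

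The one point requiring a little care — and the main (mild) obstacle — is bookkeeping the interaction of $\ddR$ with the \emph{relative} boundary condition and with the specific basis: one must confirm that applying $\ddR$ to a section written in the basis \eqref{eq:relativebasis} produces a section again expressible in that basis with smooth (not merely $\bdf^{-1}$-singular) coefficients, and in particular that no term of the form $(\text{const})\cdot dy^I$ with nonzero restriction to $\p\Mfd$ is generated from the $\bdf\,dy^J$ terms. The computation above shows the dangerous term $g_J\,d\bdf\wedge dy^J$ is harmless because it still carries $d\bdf$, which is a legitimate basis direction; the genuinely tangential pieces always retain the $\bdf$ prefactor. This also re-proves, as a byproduct, that $\ddR$ maps $\Omega(\Mfd,\p\Mfd)$ to itself, consistent with the fact that $i^*$ and $\ddR$ commute. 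Finally, since the coefficients of $\ddR$ in the basis are manifestly smooth up to the boundary and the principal part is a b-vector field, \eqref{eq:bvpsdo} yields $\ddR\in\bPsdo^{1,\order{(0,\emptyset,\emptyset)}}(\Mfd;\VBrel{}{\Mfd}{\p\Mfd})$, completing the proof.
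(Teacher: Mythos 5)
Your proof is correct and coincides with the paper's own argument: the paper's proof offers exactly this local computation in the trivialization \eqref{eq:relativebasis}, showing $\ddR((f\,d\bdf+g\bdf)\wedge dy^I)=((\p_{y^i}f)dy^i+(\bdf\p_\bdf g)+g)\wedge d\bdf\wedge dy^I+(\p_{y^i}g)\bdf\,dy^i\wedge dy^I$, so that the principal part is given by the b-vector fields \eqref{eq:bvfbasis}, and then invokes \eqref{eq:bvpsdo}. The paper also records a one-line abstract alternative (using $\Omega(\Mfd,\p\Mfd)=\ddR\bdf\wedge\Omega(\Mfd)+\bdf\,\Omega(\Mfd)$ to check $\ddR(\bdf\,\Omega(\Mfd,\p\Mfd))\subset\bdf\,\Omega(\Mfd,\p\Mfd)$), but your route is one of the two the author gives.
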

This fails if the de Rham differential is viewed as a map 
$\ddR:\Omega(\Mfd)\to\Omega(\Mfd)$.
\begin{proof}
Using $\Omega(\Mfd,\p\Mfd)= \ddR\bdf\wedge\Omega(\Mfd)+\bdf\Omega(\Mfd)$
one gets $\ddR(\bdf\Omega(\Mfd,\p\Mfd)) \subset \bdf\Omega(\Mfd,\p\Mfd)$
as required. 
Alternatively, one can make a local calculation using the
trivialization \eqref{eq:relativebasis}: 
For smooth functions $f,g$ and every multiindex $I$ one has
$
\ddR((f d\bdf + g \bdf )\wedge dy^{I})
=
((\p_{y^i}f)dy^i + (\bdf\p_{\bdf}g) + g) \wedge d\bdf\wedge dy^I
+
(\p_{y^i}g) \bdf dy^i\wedge dy^I
$, 
so in this trivialization,
the principal part of $d$ is given by the 
b-vector fields \eqref{eq:bvfbasis}.\qed
\end{proof}

{\bf Composition. Commutators. Mapping properties.}
We state properties
of b-pseudodifferential operators that will be used later on.

Unlike the full calculus, 
the space of b-pseudodifferential operators in Definition \ref{def:bpsdo} 
is not closed under composition. 
For example, even if one composes operators whose Schwartz kernels 
are smooth on $\bMfd$, 
their composition can have logarithmic behavior at the boundary faces.
In this article we will not encounter this effect, that is, 
all compositions that we need 
stay in the more restrictive space in Definition \ref{def:bpsdo}. 
These compositions are summarized in the next lemma.
\begin{lemma}\label{lem:composition}
Let $m,m'\in\R\cup\{-\infty\}$ and let 
$\ind{}$,
$\indd{}$,
$\inddd{}$
be one of the triples
\begin{align*}
\arraycolsep=11pt
\def\arraystretch{1.4}
\begin{array}{ccc}
\ind{}&\indd{}&\inddd{}\\
\hline
(\ind{\bff{}},\ind{\blf{}},\ind{\brf{}})
&
(\indd{\bff{}},\emptyset,\emptyset) 
&
(\ind{\bff{}}+\indd{\bff{}},\ind{\blf{}},\ind{\brf{}}+\indd{\bff{}})\\
(\ind{\bff{}},\emptyset,\ind{\brf{}})
&
(\emptyset,\indd{\blf{}},\indd{\brf{}})
&
(\emptyset,\ind{\bff{}}+\indd{\blf{}},\indd{\brf{}})\\
(\ind{\bff{}},\emptyset,\emptyset )
&
(\indd{\bff{}},\indd{\blf{}},\indd{\brf{}})
&
(\ind{\bff{}}+\indd{\bff{}},\ind{\bff{}}+\indd{\blf{}},\indd{\brf{}})
\end{array}
\end{align*}
where $\ind{\bff{}},\dots,\indd{\brf{}}\in\Nzero\cup\{\emptyset\}$,
with the understanding that $\ell+\emptyset=\emptyset$
for all $\ell\in\Nzero\cup\{\emptyset\}$. 
If 
$A\in\smash{\bPsdo^{m,\ind{}}(\Mfd)}$ and 
$B\in\smash{\bPsdo^{m',\indd{}}(\Mfd)}$
then 
$A\circ B \in \bPsdo^{m+m',\inddd{}}(\Mfd)$.
\end{lemma}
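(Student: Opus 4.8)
The plan is to reduce the composition statement to a computation on the b-double space, using the pushforward/pullback description of kernel composition. First I would recall the standard fact (as in \cite{MelroseGreenBook,Mazzeo91}) that the composition $A\circ B$ of two operators on $\Mfd$ corresponds, on the triple space, to pulling back the two Schwartz kernels along the appropriate stretched projections $\bMfd\times_\Mfd\bMfd$-type space and pushing forward along the remaining one; concretely, one works on the b-triple space $M^3_b$ with its three b-stretched projections $\pi_C,\pi_F,\pi_S$ onto $\bMfd$, and the composition kernel is $(\pi_C)_*\bigl(\pi_F^*K_A\cdot\pi_S^*K_B\bigr)$, with densities handled by the half-density convention in \eqref{eq:kerspace2}. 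The conormal-singularity part along $\bdiag$ is the classical symbolic composition and produces the order $m+m'$; this is the standard pseudodifferential composition and I would simply cite \cite[Chapter 18.2]{Hoermander3} or \cite{hintzmicro} for it, since it is unaffected by the boundary. The real content is tracking the index sets at the three boundary faces $\bff{}$, $\blf{}$, $\brf{}$.

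\medskip

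The key step is the b-pushforward theorem: for each of the three face labels one reads off the vanishing order of $\pi_F^*K_A\cdot\pi_S^*K_B$ at the relevant hypersurface of $M^3_b$ and then applies Melrose's pushforward theorem to $(\pi_C)_*$. I would organize this by going through the three rows of the table separately, since each has a different pattern of $\emptyset$'s that kills the logarithmic terms that would otherwise appear. In the first row, $B$ vanishes to infinite order at both side faces, so in the fibered product only the front-face behavior of $B$ interacts, and the $\bff{}$-orders add while $\ind{\blf{}}$ is inherited from $A$ and the right face of the product picks up $\ind{\brf{}}+\indd{\bff{}}$ from the way $\brf{}$ of the first factor meets $\bff{}$ of the second under the stretched projections. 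The second and third rows are the mirror-image situations where the interior factor is smooth at the front face (or at both front and one side), so that in the push-forward the front-face order of the \emph{other} factor gets added to the surviving side-face order, and again the complementary $\emptyset$'s guarantee there is no overlap of asymptotic expansions and hence no $\log$. In all three cases one must also check that the pushed-forward kernel is still \emph{classical} conormal along $\bdiag$ (not merely conormal) and smooth away from $\bdiag$ up to the faces — this follows because away from $\bdiag$ everything is smooth with the stated index sets, and near $\bdiag$ the composition is microlocally the standard symbol composition.

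\medskip

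The main obstacle is bookkeeping the face structure of the b-triple space $M^3_b$ and verifying in each of the three cases that no logarithmic terms are generated — i.e., that for the specific triples in the table the hypotheses of the pushforward theorem giving a pure (non-log) index set are met. This is where one uses crucially that in each row at least one factor has $\emptyset$ in the positions that would otherwise collide: the relevant "overlap" index sets on $M^3_b$ are of the form (integer) and $\emptyset$ (infinite order), and the extended-union $\overline\cup$ of such a pair is again a single integer, so Melrose's pushforward theorem \cite[Chapter 5]{MelroseGreenBook} yields a smooth (classical) kernel with the claimed single-integer orders rather than a polyhomogeneous one with logs. I would state this as the technical heart of the argument, do the front-face order addition explicitly for the first row as a model computation, and indicate that the remaining two rows are obtained by the analogous (and in fact symmetric, under swapping the roles of the two factors and of $\blf{}\leftrightarrow\brf{}$) analysis, referring to \cite{MelroseGreenBook,Mazzeo91} for the general composition machinery. $\qed$
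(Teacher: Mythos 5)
Your proposal is correct in outline, but it is worth saying plainly how it relates to the paper: the paper's entire proof is the citation ``By \cite[Theorem 3.15]{Mazzeo91}'', i.e.\ the general composition theorem for the (full) b-calculus, whereas what you sketch is essentially the \emph{proof} of that cited theorem --- the b-triple space with its three stretched projections, the pullback--pushforward description of the composed kernel, the symbolic composition along $\bdiag$ giving order $m+m'$, and Melrose's pushforward theorem for the index sets. Your index-set bookkeeping is consistent with the general formula (front face $(\ind{\bff{}}+\indd{\bff{}})\,\overline\cup\,(\ind{\blf{}}+\indd{\brf{}})$, left face $\ind{\blf{}}\,\overline\cup\,(\ind{\bff{}}+\indd{\blf{}})$, right face $\indd{\brf{}}\,\overline\cup\,(\ind{\brf{}}+\indd{\bff{}})$), and you correctly identify the one point that makes the lemma's restricted statement work: in each row of the table every extended union pairs an integer order with $\emptyset$, so no logarithms are produced and the composite stays in the subcalculus of Definition \ref{def:bpsdo} whose kernels are smooth (with integer vanishing orders) at the faces --- exactly the issue the surrounding text warns about when it says the space of Definition \ref{def:bpsdo} is not closed under general composition. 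Two small things you should add if you carry this out: (i) the composability/integrability hypothesis of the pushforward theorem (in Mazzeo's normalization, a positivity condition on $\ind{\brf{}}+\indd{\blf{}}$ after the half-density shift coming from the $\smash{1/\sqrt{\bdf_{\bff{}}}}$ convention), which must be checked in each row even though it holds automatically here; and (ii) an explicit remark that sums of ``smooth'' integer index sets are again of smooth type, so the output is genuinely in $\A^{\inddd{}}(\bMfd)$ and not merely polyhomogeneous. Given that all of this is exactly the content of \cite[Theorem 3.15]{Mazzeo91}, the citation route the paper takes is the economical one; your route buys a self-contained verification of the three specific rows at the cost of importing the triple-space machinery.
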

\begin{proof}
By \cite[Theorem 3.15]{Mazzeo91}.\qed
\end{proof}
We recall a commutation result.
\begin{lemma}\label{lem:commutator}
Let $A\in\bPsdo^{m,\ind{}}(\Mfd)$ with $m\in\R\cup\{-\infty\}$ and 
$\ind{}\in(\Nzero\cup\{\emptyset\})^3$, let $V\in\bVF$, 
and let $f\in \bdf^{\ell}C^\infty(\Mfd)$ where $\ell\in\Nzero$ and 
$\bdf$ is a boundary defining function for $\p\Mfd$.
Then
\begin{subequations}
\begin{align}
[A,f] &\in\bPsdo^{m-1,\order{(\ind{\bff{}}+\ell,\ind{\blf{}},\ind{\brf{}})}}(\Mfd)
\label{eq:commutatorf}\\
[A,fV] &\in\bPsdo^{m,\order{(\ind{\bff{}}+\ell,\ind{\blf{}},\ind{\brf{}})}}(\Mfd)
\label{eq:commutatorV}
\end{align}
\end{subequations}
where $[\,\cdot\,,\,\cdot\,]$ is the commutator.
\end{lemma}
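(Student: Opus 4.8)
The plan is to reduce the commutator statements to the composition rules of Lemma~\ref{lem:composition} by observing that $f$ and $fV$ are themselves b-pseudodifferential operators with known orders and index sets, and that a commutator is a difference of two compositions. The only subtlety is a cancellation of principal symbols that produces the drop in order from $m$ to $m-1$ in \eqref{eq:commutatorf}, and the absence of an order gain in \eqref{eq:commutatorV}; both follow from the classical conormal symbol calculus on the b-diagonal.

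First I would record the two inputs. Multiplication by $f\in\bdf^\ell C^\infty(\Mfd)$ lies in $\bPsdo^{0,\order{(\ell,\emptyset,\emptyset)}}(\Mfd)$ --- its Schwartz kernel is supported on the diagonal, hence vanishes to infinite order at $\blf{}$ and $\brf{}$, and carries the factor $\bdf^\ell$ which on the b-diagonal is comparable to $\bdf_{\bff{}}^\ell$. Likewise $fV$ for $V\in\bVF$ is a first order b-differential operator (a composition of the $\ell$-fold vanishing multiplier with a b-vector field), so $fV\in\bPsdo^{1,\order{(\ell,\emptyset,\emptyset)}}(\Mfd)$ by \eqref{eq:bvpsdo} and the first row of Lemma~\ref{lem:composition}. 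Then I would apply Lemma~\ref{lem:composition} to each of the two products making up the commutator: with $\ind{}$ the index set of $A$ and $\indd{}=(\ell,\emptyset,\emptyset)$, the first row of the table gives $\inddd{}=(\ind{\bff{}}+\ell,\ind{\blf{}},\ind{\brf{}})$ for both orderings $A\circ f$ and $f\circ A$ (and $A\circ(fV)$, $(fV)\circ A$). This already shows $[A,f]\in\bPsdo^{m,\order{(\ind{\bff{}}+\ell,\ind{\blf{}},\ind{\brf{}})}}(\Mfd)$ and $[A,fV]\in\bPsdo^{m+1,\order{(\ind{\bff{}}+\ell,\ind{\blf{}},\ind{\brf{}})}}(\Mfd)$ --- the index sets are already as claimed, but the orders are each one too large.

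The main obstacle is therefore the order drop, which is the standard fact that commuting a $\bPsdo$ with a multiplication operator lowers the order by one, and commuting with a first-order operator preserves it. This is a statement about the leading (classical) conormal symbol along $\bdiag$: in the local coordinates $(x,z)$ of the definition of $\bconormal^m$, the operator $A$ is a b-pseudodifferential operator whose full symbol $a(x,\xi)$ is classical of order $m$, multiplication by $f$ acts on the symbol level simply by $f(x)$, and the symbol of $[A,f]$ has leading term $\tfrac1i\,(\partial_\xi a)\cdot(\partial_x f)$, which is classical of order $m-1$; similarly $[A,fV]$ has leading symbol $\tfrac1i\{a,\,f\,\sigma_1(V)\}$ which is again of order $m$ (the order-$m+1$ terms in $a\cdot f\sigma_1(V)$ cancel). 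Because $f$ vanishes to order $\ell$ at the boundary, the coordinate expression carries the factor $\bdf^\ell$ throughout, which is exactly what feeds the $\ind{\bff{}}+\ell$ in the front-face index; the left and right face orders are unchanged because $f$ and $V$ are local (supported on the diagonal). Concretely I would cite the symbolic calculus underlying \cite[Chapter 18.2]{Hoermander3} (equivalently the composition and commutator formulas in \cite{Mazzeo91}) to justify that the leading-order cancellations take place within the classical conormal class, and then invoke Lemma~\ref{lem:KernelDiagonal} (or directly the characterization of $\bconormal^{m-1}$) to conclude that the commutator kernels satisfy \eqref{eq:bpsdodef} with the asserted order and index set. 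This completes the proof of both \eqref{eq:commutatorf} and \eqref{eq:commutatorV}.
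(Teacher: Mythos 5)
Your proposal is correct in spirit but takes a genuinely different route from the paper, and the key step of your argument is not quite supported by the references you cite. The paper treats $[A,f]$ by a direct Schwartz-kernel computation: the kernel of $[A,f]$ is $(\piR^*f - \piL^*f)K_A$, and the factor $\piR^*f-\piL^*f$ pulls back to a smooth function on $\bMfd$ that vanishes linearly along the b-diagonal and to order $\ell$ at $\bff{}$ (but not at $\blf{}$ or $\brf{}$). Multiplying $\blowdown^*K_A$ by this factor simultaneously drops the conormal order from $m$ to $m-1$ (a smooth function vanishing along the conormal submanifold lowers the conormal order, essentially the content of Lemma~\ref{lem:KernelDiagonal}) and raises the front-face index by $\ell$ while leaving $\blf{}$ and $\brf{}$ untouched, proving \eqref{eq:commutatorf} in one stroke with no appeal to symbolic calculus. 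For \eqref{eq:commutatorV} the paper decomposes $[A,fV]=f[A,V]+[A,f]V$, cites \cite[Proposition 3.30]{Mazzeo91} for $[A,V]\in\bPsdo^{m,\ind{}}(\Mfd)$, and then applies Lemma~\ref{lem:composition} to each summand.

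Your route instead reduces to the asymptotic b-symbol expansion: use Lemma~\ref{lem:composition} to get the correct index set (with order $m$ resp.\ $m+1$), then argue that the leading symbols cancel in the commutator to gain one order. This is morally the right phenomenon, but as written there is a gap. Lemma~\ref{lem:composition} (via \cite[Theorem 3.15]{Mazzeo91}) asserts only a composition law with prescribed order and index set --- it does not provide an asymptotic expansion of the b-symbol of a composition, so the Poisson-bracket cancellation you invoke is not a consequence of that lemma. The reference to \cite[Chapter 18.2]{Hoermander3} also does not suffice, since it treats conormal distributions on manifolds without boundary and does not address uniformity up to the front face. You would need the full symbolic b-calculus (e.g.\ from \cite{MelroseGreenBook}) to justify the order drop, and after doing so you would still need to observe separately that the symbolic improvement does not spoil the boundary-face index set established by the composition step. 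The paper's kernel-formula argument handles all of this at once and is both shorter and more self-contained; your approach can be made rigorous, but it needs a stronger citation than the two you name.
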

\begin{proof}
Let $K_A$ be the Schwartz kernel of $A$.
Then the kernel of $[A,f]$ is
\[ 
(\piR^*f - \piL^*f) K_A
\]
The pullback of $\piR^*f - \piL^*f$ along the blowdown map
vanishes along $\bdiag\subset\bMfd$, and vanishes to order $\ell$ at $\bff{}$,
which implies \eqref{eq:commutatorf}.
We show that both terms in 
\[ 
[A,fV] = f[A,V] + [A,f]V
\]
are in \eqref{eq:commutatorV}.
First term:
We have $f\in\bPsdo^{0,\order{(\ell,\emptyset,\emptyset)}}(\Mfd)$
and
$[A,V]\in\bPsdo^{m,\ind{}}(\Mfd)$
by \cite[Proposition 3.30]{Mazzeo91}.
Then Lemma \ref{lem:composition} implies that this term is in 
\eqref{eq:commutatorV}.
Second term:
This follows from \eqref{eq:commutatorf} and
\eqref{eq:bvpsdo} and Lemma \ref{lem:composition}.\qed
\end{proof}

Recall the weighted b-Sobolev norms 
$\|\cdot\|_{\bdf^\delta\Hsob^{s}(\Mfd)}$ in \eqref{eq:Hsobnorm}.
The functions $\Cvan^\infty(\Mfd)$ are
dense in the corresponding weighted b-Sobolev spaces $\bdf^\delta\Hsob^{s}(\Mfd)$.
\begin{lemma}\label{lem:Mappingprop}
Let $s,m,\delta,\delta'\in\R$ and 
$\ind{}=(\ind{\bff{}},\ind{\blf{}},\ind{\brf{}})\in(\Nzero\cup\{\emptyset\})^3$ such that 
\begin{align*}
\ind{\bff{}} &\ge \delta'-\delta &
\ind{\blf{}}-\delta' &> -\tfrac12 &
\ind{\brf{}}+\delta &>-\tfrac12 
\end{align*}
with the understanding that each inequality that involves $\emptyset$
holds trivially.
If 
$A\in\bPsdo^{m,\ind{}}(\Mfd)$
then there exists $C>0$ 
such that for all $f\in \Cvan^\infty(\Mfd)$
one has
\begin{equation}\label{eq:estimategeneral}
\|Af\|_{\bdf^{\delta'}\Hsob^{s-m}(\Mfd)}
\le 
C \|f\|_{\bdf^{\delta}\Hsob^s(\Mfd)}
\end{equation}
In particular, $A$ extends to a bounded linear map
$\bdf^{\delta}\Hsob^s(\Mfd) \to \bdf^{\delta'}\Hsob^{s-m}(\Mfd)$.
\end{lemma}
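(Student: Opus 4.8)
The plan is to reduce the weighted estimate \eqref{eq:estimategeneral} to an $L^2$-boundedness statement for an explicit integral operator on the b-double space, and then to invoke a Schur-type test whose hypotheses are exactly the three inequalities on the index set. First I would reduce to the case $s=m=0$: by Lemma \ref{lem:composition}, differentiation along b-vector fields lies in $\bPsdo^{1,(0,\emptyset,\emptyset)}(\Mfd)$, and this composition does not change the left/right face orders, so commuting a product $V_1\cdots V_{s-m}$ of b-vector fields through $A$ (using the commutator estimate \eqref{eq:commutatorV} to absorb the lower-order terms, each of which has an \emph{improved} front-face order so the index-set hypotheses remain satisfied) expresses $V_1\cdots V_{s-m}(Af)$ as a finite sum of operators in $\bPsdo^{0,\ind{}}(\Mfd)$ applied to $V_1'\cdots V_s'f$. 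Likewise conjugating by the weight $\bdf^{\delta}$ turns $A$ into $\bdf^{-\delta'}A\bdf^{\delta}$, which multiplies the Schwartz kernel by $\piL^*(\bdf^{-\delta'})\piR^*(\bdf^{\delta})$; on $\bMfd$ this weight equals $\bdf_{\blf{}}^{-\delta'}\bdf_{\brf{}}^{\delta}$ times a positive smooth function (times an appropriate power of $\bdf_{\bff{}}$, using that $\bdf_{\bff{}}$ is comparable to $\bdf_{\blf{}}\bdf_{\brf{}}$), so after this conjugation it suffices to prove $L^2(\Mfd)\to L^2(\Mfd)$ boundedness of an operator whose pulled-back kernel lies in $\bdf_{\bff{}}^{-1/2}\,\bdf_{\bff{}}^{\ind{\bff{}}-(\delta'-\delta)}\bdf_{\blf{}}^{\ind{\blf{}}-\delta'}\bdf_{\brf{}}^{\ind{\brf{}}+\delta}\cdot\bconormal^{0}(\bMfd,\bdiag)\otimes\dens^{1/2}$.

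Next I would split the kernel with a partition of unity into a piece supported near $\bdiag$ and a piece supported away from $\bdiag$. The piece away from the b-diagonal is smooth on $\bMfd$ (conormal of order $0$ away from its singular support is just smooth), vanishing to the stated orders at the three faces; writing it back as a kernel on $\Mfd^2$ against the right density factor, $L^2$-boundedness follows from Schur's test, where the relevant $\int |K(p,p')|\,dp$ and $\int|K(p,p')|\,dp'$ are finite precisely because in the local model (Remark \ref{rem:blowup}) integrating $t^{a}$ with respect to $dt/t$ — the b-density — near $t=0$ is finite iff $a>0$, and the exponents $a$ appearing after distributing the $\bdf_{\bff{}}^{-1/2}$ factor across $\bdf_{\bff{}}\sim\bdf_{\blf{}}\bdf_{\brf{}}$ and integrating out one variable are exactly $\ind{\blf{}}-\delta'+\tfrac12$ on the left-face side and $\ind{\brf{}}+\delta+\tfrac12$ on the right-face side, which are positive by hypothesis; the condition $\ind{\bff{}}\ge\delta'-\delta$ guarantees the front-face exponent is nonnegative so no divergence is created there. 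The piece near $\bdiag$ is a classical conormal distribution of order $0$ along a submanifold meeting the boundary transversally in $\bff{}$; in the local coordinates of the conormal characterization it is, up to the smooth weights, a standard order-$0$ pseudodifferential operator in the $z$-variables (uniformly in $x$, and with $x^1\ge 0$ or $x^1\in\R$), so its $L^2$-boundedness on the fibers is the classical Calder\'on--Zygmund/Hörmander theorem, and the boundedness in the $x$-directions including up to $x^1=0$ again reduces to a one-dimensional Schur estimate with the same exponents as above. Patching these two contributions and undoing the reductions of the first paragraph gives \eqref{eq:estimategeneral}; density of $\Cvan^\infty(\Mfd)$ then yields the claimed bounded extension.

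The main obstacle I expect is the bookkeeping at the front face: one must track carefully how the factor $\bdf_{\bff{}}^{-1/2}$ in Definition \ref{def:bpsdo}, the half-density normalization, and the identity $\bdf_{\bff{}}\sim\bdf_{\blf{}}\bdf_{\brf{}}$ interact, so that the exponents fed into the Schur test come out as $\ind{\blf{}}-\delta'+\tfrac12>0$ and $\ind{\brf{}}+\delta+\tfrac12>0$ and $\ind{\bff{}}-(\delta'-\delta)\ge 0$ rather than some shifted version; getting the half-powers right is where sign errors hide. A secondary point is making the Schur estimate genuinely uniform across the whole of $\bMfd$ — near the corners $\blf{i}\cap\bff{i}$ and $\brf{i}\cap\bff{i}$ one needs product-type coordinates and a two-variable version of the integrability computation — but this is routine once the model computation in Remark \ref{rem:blowup} is set up. Alternatively, one can cite the full mapping theory in \cite{MelroseGreenBook} or \cite{Mazzeo91} and merely verify that our index conventions match theirs, which shortcuts the analytic work at the cost of a less self-contained argument.
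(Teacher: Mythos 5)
Your proposal attempts a full, self-contained derivation; the paper's actual proof is a one-line citation to \cite[Corollary 3.23 and Theorem 3.25]{Mazzeo91}. You correctly anticipate this at the end when you offer citing \cite{Mazzeo91} or \cite{MelroseGreenBook} as an ``alternative,'' but for the paper that alternative \emph{is} the proof, since the index conventions here are chosen precisely to align with Mazzeo's (modulo the reordering $(\ind{\blf{}},\ind{\brf{}},\ind{\bff{}}) \leftrightarrow (\ind{\bff{}},\ind{\blf{}},\ind{\brf{}})$ noted in the footnote to Definition \ref{def:bpsdo}). What your direct route buys is transparency about \emph{why} the three inequalities appear: you correctly trace them, after conjugation by $\bdf^{\pm}$ and the identity $\piL^*\bdf \sim \bdf_{\blf{}}\bdf_{\bff{}}$, $\piR^*\bdf \sim \bdf_{\brf{}}\bdf_{\bff{}}$, to the exponents $\ind{\blf{}}-\delta'+\tfrac12>0$, $\ind{\brf{}}+\delta+\tfrac12>0$ (Schur integrability at the left/right faces) and $\ind{\bff{}}-(\delta'-\delta)\ge 0$ (no blowup at the front face), which is genuinely clarifying and matches the structure of Mazzeo's argument.

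Two points to tighten. First, the reduction to $s=m=0$ as you state it is not quite correct: commuting $V_1\cdots V_{s-m}$ through $A$ does not, by itself, produce operators of order $0$ applied to $V'_1\cdots V'_s f$ — the commutator $[A,V]$ has order $m$, not $m-1$, and $V_1\cdots V_{s-m}A$ has order $s$, not $0$. The standard fix is to \emph{factor}: write each order-$s$ operator arising as $\sum_j B_j W_j$ with $B_j$ order $0$ and $W_j$ an $\le s$-fold product of b-vector fields (this requires a left-parametrix for an elliptic b-differential operator of order $s$), reducing to the $L^2\to L^2$ case for nonnegative integer $s$; then duality and interpolation handle the rest. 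Your parenthetical claim of an ``improved front-face order'' for the commutator terms is also only accurate when the vector-field coefficient vanishes at $\p\Mfd$ (i.e.\ $\ell\ge1$ in Lemma \ref{lem:commutator}); for a plain b-vector field $[A,V]$ merely retains the index set, which is still sufficient. Second, your treatment of the near-diagonal piece at the corner $\bdiag\cap\bff{}$ is the most delicate part and a one-dimensional Schur estimate alone does not close it cleanly; the usual route there is Mellin transform in the b-direction to reduce to a uniformly bounded family of classical order-$0$ operators on the boundary fibers. Neither issue is fatal to the outline, but both would need to be fixed before the argument stands on its own. Given that Lemma \ref{lem:Mappingprop} is stated exactly to package Mazzeo's results for downstream use, the citation is the intended proof; your derivation is best thought of as an (instructive, slightly imprecise) expansion of it.
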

\begin{proof}
By \cite[Corollary 3.23 and Theorem 3.25]{Mazzeo91}.\qed
\end{proof}
\begin{remark}\label{rem:normsVB}
\final{Lemma \ref{lem:composition}, \ref{lem:commutator}, \ref{lem:Mappingprop} hold analogously for maps between sections of vector bundles,
where in \eqref{eq:estimategeneral} 
the norms are taken componentwise relative to 
local trivializations of the bundles.
It is understood that the trivializations 
are regular up to the boundary of $\Mfd$.}
%
%
\end{remark}

\section{Collar neighborhood of boundary}\label{sec:collarnbhd}
We construct the chain homotopy \eqref{eq:Gfullthm}
in a collar neighborhood of the boundary.
Let $\Mclosed$ be a smooth connected closed manifold.
Define $\Mcollar = [0,1]\times \Mclosed$,
where we denote by $\tcoord$ the coordinate on the first factor $[0,1]$,
and denote $\McollarBC_0 = \{0\}\times\Mclosed$.
The relative forms 
$\Omega\left(\Mcollar,\McollarBC_0\right)$
are given by all forms on $\Mcollar$ whose pullback along the inclusion 
$\McollarBC_0 \hookrightarrow \Mcollar$ vanishes.
This complex is exact,
which follows from the fact that
the complex of forms on $[0,1]$ whose
pullback to $0$ vanishes,
is exact.
\begin{lemma}\label{lem:HomotopyCollar}
Let $\Mcollar=[0,1]\times\Mclosed$ be as above.
There exists an $\R$-linear map 
\[ 
\Gcollar:\; \Omega^k(\Mcollar,\McollarBC_0)
\to\Omega^{k-1}(\Mcollar,\McollarBC_0)
\]
defined for every integer $k$,
with the following properties:
\begin{enumerate}[label=(a\arabic*)]
\item \label{item:CollarHomotopy}
Algebraic control:
It satisfies $\one = \ddR\Gcollar+\Gcollar\ddR$.
\item \label{item:CollarSupport}
Support control: 
For all $0<\eps\le1$ and all 
$\omega\in\Omega\left(\Mcollar,\McollarBC_0\right)$,
\[ 
\omega|_{\tcoord\le\eps}=0
\quad\Rightarrow\quad
(\Gcollar\omega)|_{\tcoord\le\eps}=0
\]
\item \label{item:CollarAnalytic}
Analytic control:
If $\cutoff\,{\in}\, C^\infty([0,1])$ vanishes 
in a neighborhood of $t\,{=}\,1$ then
\begin{equation}\label{eq:cutgcur}
\cutoff\Gcollar\cutoff\;\in\;
\bPsdo^{-1,\order{(0,\emptyset,0)}}(\Mcollar;
\VBrel{}{\Mcollar}{\McollarBC_0})
\end{equation}
where $\VBrel{}{\Mcollar}{\McollarBC_0}$ is the bundle
whose space of smooth sections is $\Omega(\Mcollar,\McollarBC_0)$.
\end{enumerate}
\end{lemma}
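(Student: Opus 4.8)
The plan is to build $\Gcollar$ explicitly from the standard fiber-integration homotopy for the Poincar\'e lemma on $[0,1]$ and to verify the three properties by hand. Write every form on $\Mcollar=[0,1]\times\Mclosed$ uniquely as $\omega = dt\wedge \alpha_t + \beta_t$, where $\alpha_t,\beta_t$ are $t$-dependent forms on $\Mclosed$ (no $dt$), and the relative condition $\omega|_{t=0}=0$ means precisely $\beta_0=0$. Define the fiber-integration operator
\[
(\Gcollar\omega)(t,y) \;=\; \int_0^t \alpha_s(y)\,ds\,.
\]
This is the operator from the usual proof of the Poincar\'e lemma adapted to the half-open interval, integrating from the face $t=0$ so as to respect the relative boundary condition. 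One checks directly that $\Gcollar\omega$ has no $dt$-component, that its value at $t=0$ vanishes (the integral is empty), so $\Gcollar\omega\in\Omega^{k-1}(\Mcollar,\McollarBC_0)$. Property \ref{item:CollarHomotopy}, the identity $\one = \ddR\Gcollar+\Gcollar\ddR$, is the standard computation: splitting $\ddR = dt\wedge\p_t + d_{\Mclosed}$ and using the fundamental theorem of calculus $\p_t\int_0^t\alpha_s\,ds = \alpha_t$, together with the vanishing $\int_0^0 = 0$; the boundary term at the lower endpoint is killed by the relative condition, and there is no boundary contribution at $t=1$ because we integrate from $0$. Property \ref{item:CollarSupport} is immediate from the formula: if $\omega|_{t\le\eps}=0$ then $\alpha_s=0$ for $s\le\eps$, hence $\int_0^t\alpha_s\,ds=0$ for $t\le\eps$.

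The substantive part is property \ref{item:CollarAnalytic}, showing that $\cutoff\Gcollar\cutoff$ is a b-pseudodifferential operator of order $-1$ with index set $(0,\emptyset,0)$ when $\cutoff$ vanishes near $t=1$. I would compute the Schwartz kernel of $\Gcollar$ acting componentwise in the basis \eqref{eq:relativebasis}. In the splitting $\omega=dt\wedge\alpha_t+\beta_t$, the operator $\Gcollar$ only sees $\alpha_t$ and produces $\int_0^t\alpha_s\,ds$, so its kernel on $\Mcollar^2 = [0,1]_t\times\Mclosed_y\times[0,1]_{t'}\times\Mclosed_{y'}$ is
\[
K(t,y,t',y') \;=\; \heaviside(t-t')\,\delta_{\Mclosed}(y-y')\,dt'\,\mathrm{dvol}_{\Mclosed}(y')
\]
(paired with the appropriate density factor), where $\heaviside$ is the Heaviside function, i.e.\ the operator is $\mathbf{1}_{t'<t}$ in the $t$-variable tensored with the identity on $\Mclosed$. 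The diagonal singularity lives entirely in the $\Mclosed$-directions $y=y'$ together with $t=t'$; it is the Schwartz kernel of a classical order $-1$ operator (the "identity" in $y$ is an order $0$ operator and the Heaviside convolution in $t$ gains one order, or more directly $\mathbf{1}_{t'<t}$ is a classical $\Psi$DO of order $-1$ in one variable). The point is then to transport this picture to the b-double space $\bMfd$, which near $\bff{0}$ is the blowup of $(0,0)$ in $[0,1)_t^2$ times $\Mclosed^2$: I would introduce projective coordinates $(t,u)$ with $u=t'/t$ away from the left face $\blf{0}$, under which $\mathbf{1}_{t'<t}=\mathbf{1}_{u<1}$ is smooth up to $\bff{0}$ and up to $\brf{0}$ (it equals $1$ near the front face and near the right face $u=0$), and it vanishes to infinite order near $\blf{0}$ ($u=\infty$, i.e.\ $t'\gg t$) — except that we must use $\cutoff$ on the right to cut off before $t'=1$, which is exactly what removes the only place the kernel could be nonsmooth at $\blf{0}$. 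This gives vanishing order $0$ at $\bff{0}$, $\emptyset$ (infinite) at $\blf{0}$, and $0$ at $\brf{0}$, matching the claimed index set $(0,\emptyset,0)$; the cutoff $\cutoff$ on the left additionally ensures the kernel is supported away from the faces at $t=1$ where $\bMfd$ is just a smooth interior point. By Lemma \ref{lem:KernelDiagonal} applied to the lift to $[\bMfd,\bdiag]$, the conormal regularity of order $\dim\Mcollar - 1$ in the normal density weight translates to classical conormality of order $-1$, completing the verification.

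The main obstacle I anticipate is bookkeeping at the left face: the naive fiber-integration kernel $\mathbf{1}_{t'<t}$ does \emph{not} vanish at $\blf{0}$ globally on $\bMfd$ — in the region $t'<t$ with $t$ small it is identically $1$, and near $\blf{0}$ we have $t\to 0$ with $t'/t$ possibly small, where the kernel is $1$, not rapidly vanishing. This is precisely why the cutoff $\cutoff$ on \emph{both} sides is essential in \ref{item:CollarAnalytic}: the factor $\cutoff$ on the right multiplies the kernel by a function of $t'$ vanishing near $t'=1$, which does nothing to help at $\blf{0}$; rather it is the interplay that matters — actually the kernel $\heaviside(t-t')$ is already smooth up to $\blf{0}$ (where $t=0$, forcing $t'\le t=0$, so the Heaviside argument is $\le 0$ and the kernel is $0$ in a neighborhood of $\blf{0}\setminus\bff{0}$), so the index $\emptyset$ at $\blf{0}$ holds automatically, and the role of $\cutoff$ is only to guarantee the kernel is compactly supported away from $t=t'=1$ so that no global patching on the closed manifold part is needed. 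I would double-check this left-face claim carefully in projective coordinates, as it is the one place where an off-by-one or sign error in "which face is which" would break the stated index set. Everything else — the conormal order count via Lemma \ref{lem:KernelDiagonal}, and the componentwise statement in the basis \eqref{eq:relativebasis} via Remark \ref{rem:normsVB} — is routine once the kernel is written down.
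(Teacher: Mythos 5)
Your operator is exactly the paper's $\Gcollar_a$ with $a=0$ (fiber integration along the homotopy $h(s,(t,y))=(st,y)$), and for that operator properties \ref{item:CollarHomotopy} and \ref{item:CollarSupport} are indeed correct, by the same computation the paper uses. The gap is in \ref{item:CollarAnalytic}, and it is not a bookkeeping issue at the left face but a failure of the operator itself to be pseudodifferential. Your Schwartz kernel is $\heaviside(t-t')\,\delta_{\Mclosed}(y-y')\,dt'$, i.e.\ a one--dimensional order $-1$ operator in $t$ tensored with the \emph{identity} on $\Mclosed$. A tensor product of pseudodifferential operators on two factors is not a pseudodifferential operator on the product: the would-be total symbol $(i\xi_0)^{-1}$ is singular on the whole hyperplane $\xi_0=0$ and does not decay in the $\Mclosed$-frequencies, and correspondingly the kernel is singular on all of $\{y=y'\}$ (every $t,t'$), not just on the diagonal $\{t=t',\,y=y'\}$. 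So $\blowdown^*K$ is not smooth on $\bMfd\setminus\bdiag$, it does not lie in $\bconormal^{-1}(\bMfd,\bdiag;\cdot)$, and Lemma \ref{lem:KernelDiagonal} cannot be invoked: $\bdfdiag^{m}\blowdowndiag^*K$ retains the transversal delta along the lift of $\{y=y'\}$ for every $m$. The conclusion itself is false for your operator, not merely unproved: applied to $f(t)g(y)\,dt$ it returns $(\int_0^t f)\,g(y)$, which gains no regularity in the $y$-directions, contradicting the $\Hsob^{s}\to\Hsob^{s+1}$ gain that membership in $\bPsdo^{-1,(0,\emptyset,0)}$ would force via Lemma \ref{lem:Mappingprop}.

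This is precisely the point the paper's construction is designed to address. It replaces $h_0$ by the family $h_a(s,(t,y))=(st,\phi_{(1-s)a}(y))$, where $\phi_a$ is built from flows of vector fields spanning $T\Mclosed$, and averages over $a$ against a bump function $\zeta$ (Definition \ref{def:Gcollardef}). Each individual $\Gcollar_a$ still satisfies \ref{item:CollarHomotopy} and \ref{item:CollarSupport} (Lemma \ref{lem:GaGzetaSatisfiesP1P2}), but only the average $\Gcollar_\zeta$ satisfies \ref{item:CollarAnalytic}: the $a$-average convolves away the factor $\delta_{\Mclosed}(y-y')$, producing (on the torus, Example \ref{ex:Torus}) a kernel proportional to $(1-u)^{-\dim\Mclosed}\,\zeta\big(\tfrac{y'-y}{1-u}\big)\,\tfrac1t$ with $u=t'/t$, which is smooth off the b-diagonal and to which Lemma \ref{lem:KernelDiagonal} does apply; the general case requires the implicit-function-theorem substitution of Section \ref{sec:AuxGeom}. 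Your observations about the left face (the kernel is supported in $\{t'\le t\}$, hence vanishes to infinite order at $\blf{0}$, and the cutoffs only serve to localize away from $t=1$) are correct and carry over to the averaged operator, but they do not rescue the unaveraged one.
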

We prove this lemma in the remainder of this section.
The proof is constructive. 
In Section \ref{sec:DefOfOp} we define an operator that
satisfies \ref{item:CollarHomotopy} and \ref{item:CollarSupport}.
In Sections \ref{sec:AuxGeom} and \ref{sec:SchwartzKernelGCollar} we show that it also satisfies \ref{item:CollarAnalytic}.

\subsection{Definition of operator}\label{sec:DefOfOp}
We start with some preliminary definitions.
Fix $\nVF\ge\dim\Mclosed$ smooth vector fields 
$V_1,\dots,V_{\nVF}$ on $\Mclosed$ that 
pointwise span the tangent space.
Since $\Mclosed$ is closed, the vector fields are complete.
For $i=1\dots \nVF$ let $\phi^{V_i}:\R\times \Mclosed\to \Mclosed$ be the flow of $V_i$.
For every $a\in \R^{\nVF}$ define $\phi_a:\Mclosed\to \Mclosed$ by 
$\phi_a = \phi_{a^1}^{V_1} \circ \cdots \circ \phi_{a^{\nVF}}^{V_{\nVF}}$
and define 
\begin{align}\label{eq:hadef}
h_a:\; [0,1]\times \Mcollar \to \Mcollar \qquad (s,(\tcoord,y)) \mapsto (s\tcoord,\phi_{(1-s)a}(y))
\end{align}
Here $s$ is the coordinate on $[0,1]$,
and $(\tcoord,y)$ is in $\Mcollar=[0,1]\times\Mclosed$.
See Figure \ref{fig:homotopypicture}.
\begin{figure}%
\centering
\homotopypicture
\captionsetup{width=115mm}
\caption{%
Left: 
The figure shows the interval $[0,1]$ times a patch on $\Mclosed$.
The two lines indicate the path $[0,1]\ni s\mapsto h_a(s,(t,y))$
for $a=0$ respectively for a general $a\neq0$,
where $s=1$ corresponds to the starting point $(t,y)$.
Depending on the value of $a$, the paths approach different
boundary points. 
Right: 
For a starting point $(t',y)$ with $t'<t$,
the angle between the two paths increases, 
and they approach the same boundary points for all 
values of $t'$, including when $t'=0$.}
\label{fig:homotopypicture}
\end{figure}%
\begin{remark}
For each $a$ the map $h_a$ is a topological homotopy between the identity 
$h_a(1,\cdot)$ and the map $h_a(0,\cdot)$. 
The image of $h_a(0,\cdot)$ 
is contained in the boundary component $\McollarBC_0=\{0\}\times\Mclosed$,
but its restriction to $\McollarBC_0$
is not the identity map for general $a\neq0$,
see Figure \ref{fig:homotopypicture}.
This property will be important to conclude 
that the chain homotopy \eqref{eq:Gcollarzera} 
below satisfies \ref{item:CollarAnalytic} in Lemma \ref{lem:HomotopyCollar}.
Geometrically it means that the average in \eqref{eq:Gcollarzera}
does not degenerate at the boundary,
in the sense that the Schwartz kernel is 
smoothed out including at the boundary.
By contrast, the average over translations in the
Bogovskii operator \eqref{eq:translaverage} 
can be seen to degenerate,
leading to non-optimal mapping properties.
\end{remark}
\newcommand{\Ps}{j}
Fiber integration is the map 
$\pi_*:\Omega([0,1]\times\Mcollar)\to \Omega(\Mcollar)$
given by 
\[
\pi_{*} \omega = \int_0^1 \Ps_s^* (\intermult_{\p_{s}}(\omega))\,ds
\]
Here $s$ is the coordinate on $[0,1]$,
$\intermult$ denotes interior multiplication,
and $\Ps_s^*$ is the pullback along the map 
$\Ps_s:\Mcollar\to [0,1]\times\Mcollar$ given by $\Ps_s(t,y) = (s,(t,y))$.
\begin{definition}\label{def:Gcollardef}
For every $a\in\R^{\nVF}$ define the $\R$-linear map 
\begin{align}
\Gcollar_a = \pi_* h_a^* \;:\;\;
\Omega^k(\Mcollar,\McollarBC_0)\to \Omega^{k-1}(\Mcollar,\McollarBC_0)
\nonumber
\end{align}
where $k$ is any integer.
For every smooth function 
$\zeta:\R^{\nVF}\to [0,\infty)$ with compact support 
and $\int_{\R^{\nVF}} \zeta(a)\,da= 1$ define
\begin{align}
\Gcollar_{\zeta} = \int_{\R^{\nVF}} \zeta(a) \Gcollar_a \,da
\;:\;\;\Omega^k(\Mcollar,\McollarBC_0)\to \Omega^{k-1}(\Mcollar,\McollarBC_0)
\label{eq:Gcollarzera}
\end{align}
The dependence on the fixed vector fields
$V_1,\dots,V_{\nVF}$ is suppressed in the notation.
\end{definition}
The map $\Gcollar_a$ indeed maps relative forms 
to relative forms, using the fact that 
$h_a\circ \Ps_s\circ i$
maps to $\McollarBC_0$, where $i:\McollarBC_0 \hookrightarrow \Mcollar$ is the natural inclusion.%
\begin{lemma}\label{lem:GaGzetaSatisfiesP1P2}
For all $\zeta$ the map $\Gcollar_\zeta$ satisfies \ref{item:CollarHomotopy}
and \ref{item:CollarSupport} in Lemma \ref{lem:HomotopyCollar}.
\end{lemma}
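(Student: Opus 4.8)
The plan is to verify the two properties in turn, reducing everything to the classical Poincar\'e-lemma identity for the single map $\Gcollar_a=\pi_*h_a^*$ and then averaging.

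\textbf{Property \ref{item:CollarHomotopy}.} First I would recall the standard homotopy formula: for a smooth map $H:[0,1]\times X\to Y$ with $H_1=H(1,\cdot)$ and $H_0=H(0,\cdot)$ one has $H_1^*-H_0^*=\ddR\,\pi_*H^*+\pi_*H^*\ddR$ as maps $\Omega(Y)\to\Omega(X)$, where $\pi_*$ is fiber integration over the $[0,1]$-factor. Apply this with $H=h_a$, $X=Y=\Mcollar$. By construction $h_a(1,\cdot)=\mathrm{id}_{\Mcollar}$, so $h_a(1,\cdot)^*=\one$. The map $h_a(0,\cdot)$ sends $(t,y)\mapsto(0,\phi_a(y))$, which factors through $\McollarBC_0=\{0\}\times\Mclosed$; hence on a relative form $\omega$ (one whose pullback to $\McollarBC_0$ vanishes) we get $h_a(0,\cdot)^*\omega=0$. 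Therefore $\one\,\omega=\ddR\Gcollar_a\omega+\Gcollar_a\ddR\omega$ for every $\omega\in\Omega(\Mcollar,\McollarBC_0)$, i.e. $\one=\ddR\Gcollar_a+\Gcollar_a\ddR$ on relative forms, for each fixed $a$. Integrating against $\zeta$ with $\int\zeta=1$ and using that $\ddR$ commutes with the (absolutely convergent, smoothly parametrized) integral $\int_{\R^{\nVF}}\zeta(a)\,(\cdot)\,da$ gives $\one=\ddR\Gcollar_\zeta+\Gcollar_\zeta\ddR$, which is \ref{item:CollarHomotopy}.

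\textbf{Property \ref{item:CollarSupport}.} Here I would argue pointwise in $\tcoord$. Fix $0<\eps\le1$ and suppose $\omega|_{\tcoord\le\eps}=0$. From \eqref{eq:hadef}, $h_a(s,(t,y))=(st,\phi_{(1-s)a}(y))$, so the first coordinate of $h_a(s,(t,y))$ is $st\le t$ for all $s\in[0,1]$. Consequently, if $t\le\eps$ then $h_a(s,(t,y))$ lies in the region $\{\tcoord\le\eps\}$ for every $s$, where $\omega$ and all its derivatives vanish; hence $h_a^*\omega$ vanishes at every point $(s,(t,y))$ with $t\le\eps$, and therefore so does $\intermult_{\p_s}h_a^*\omega$, and after fiber integration $(\Gcollar_a\omega)|_{\tcoord\le\eps}=0$. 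Averaging over $a$ preserves this, giving $(\Gcollar_\zeta\omega)|_{\tcoord\le\eps}=0$, which is \ref{item:CollarSupport}.

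\textbf{Anticipated obstacle.} Neither part is genuinely hard; the only points requiring a little care are (i) justifying that $\ddR$ and $\intermult_{\p_s}$, $\pi_*$ commute with the parameter integral $\int\zeta(a)\,da$ — this is routine since $\zeta$ has compact support and $a\mapsto h_a^*\omega$ is smooth, so differentiation under the integral sign applies on the compact manifold $\Mcollar$ — and (ii) being careful that $h_a$ is smooth on all of $[0,1]\times\Mcollar$ (it is, being built from the complete flows $\phi^{V_i}$ and the multiplication map $(s,t)\mapsto st$), so that the classical homotopy formula genuinely applies. The substantive content of Lemma \ref{lem:HomotopyCollar}, namely the analytic statement \ref{item:CollarAnalytic}, is deferred to the later subsections and is not touched here.
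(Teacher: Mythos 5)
Your proof is correct and follows essentially the same route as the paper: reduce to a fixed $a$, apply the Poincar\'e-lemma homotopy formula noting that $h_a(0,\cdot)^*$ kills relative forms, and observe that the first coordinate of $h_a(s,(t,y))$ is $st\le t$ for the support claim, then average over $a$. The extra care you take about commuting $\ddR$ with the $\zeta$-average is a routine point the paper leaves implicit.
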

The property \ref{item:CollarHomotopy} would fail if
$\Gcollar_\zeta$ was defined on all forms
rather than just relative forms.
We also note that there is ample freedom in the 
choice of $V_1,\dots,V_{\nVF}$ and $\zeta$,
which may be used to refine the support property \ref{item:CollarSupport}.

\begin{proof}
It suffices to check the lemma for $\Gcollar_a$.
\ref{item:CollarHomotopy}: 
Following the proof of the Poincar\'e lemma one obtains
$\one-h_{a}(0,\cdot)^* = \ddR \Gcollar_a+\Gcollar_a \ddR $,
where $h_{a}(0,\cdot)^*$ vanishes on relative forms.
\ref{item:CollarSupport}:
The value of $\Gcollar_a(\omega)$ at $(\tcoord,y)\in\Mcollar$ is
\[ 
(\Gcollar_a(\omega))(\tcoord,y) =\textstyle \int_0^1  
\Ps_s^* \intermult_{\p_s} (h_a^*\omega)(s,(\tcoord,y))\,ds
\]
where by \eqref{eq:hadef} the form $\omega$
is only evaluated at points $(t',y')$ with $t'\le t$.
\qed
\end{proof}
In the next two sections we show that for all $\zeta$
with support in a sufficiently small ball around the origin,
$\Gcollar_{\zeta}$ satisfies 
\ref{item:CollarAnalytic} in Lemma \ref{lem:HomotopyCollar}.
This uses the average over $a$.
To illustrate this we first consider a toy 
case where $\Mclosed$ is parallelizable.
\begin{example}\label{ex:Torus}
Let $\Mclosed=(\R/\Z)^{\CdimNEW}$ be the 
$\CdimNEW$-dimensional torus with standard coordinates $y$. 
Choose $\nVF=\CdimNEW$ and $V_{i}=\p_{y^i}$, which yields $\phi_a(y) = y+a$.
To show that $\Gcollar_\zeta$
satisfies \ref{item:CollarAnalytic} in Lemma \ref{lem:HomotopyCollar}
one must show that each component of $\cutoff\Gcollar_\zeta\cutoff$
with respect to a $C^\infty(\Mcollar)$-basis of
$\Omega(\Mcollar,\McollarBC_0)$
is in 
\smash{$\bPsdo^{-1,\order{(0,\emptyset,0)}}(\Mcollar)$}.
For concreteness we only consider one such component: 
For every $f\in C^\infty(\Mcollar)$ consider 
in $G_{\zeta}(fdt\wedge dy^1\wedge\dots\wedge dy^{\CdimNEW})$
the coefficient function of $tdy^1\wedge\cdots\wedge dy^{\CdimNEW}$,
and denote it by $g_\zeta(f)$.
This is a map $g_\zeta:C^\infty(\Mcollar)\to C^\infty(\Mcollar)$.
Explicitly, 
\begin{align*} 
g_{\zeta}(f)(t,y)
&=  \int_{\R^{\CdimNEW}}\zeta(a) \Big(\int_0^1 f(st,y+(1-s)a)\,ds\Big)\,da
\intertext{%
Assume that $\supp(\zeta)$ is contained
in a ball with small radius $\eps>0$ around the origin.
Then we can substitute $(s,a)$ with $(t',y')=(st,y+(1-s)a)$,
obtaining}
g_{\zeta}(f)(t,y)
&= 
\int_{B_\eps(y)}  \Big(\frac{1}{t}\int_0^t 
\frac{1}{(1-u)^{\CdimNEW}}
\zeta\Big(\frac{y'-y}{1-u}\Big) f(t',y')\,dt'\Big)\,dy'
\intertext{%
where we set $u=t'/t$,
and where $B_\eps(y)$ is the ball of radius $\eps$ around $y$.
From this formula one can easily read off the Schwartz kernel $K$ of $g_{\zeta}$, which is}
K(t,y,t',y') &= 
\chi_{B_\eps(y)}(y') \heaviside(1-u)
\frac{1}{(1-u)^{\CdimNEW}}
\zeta\Big(\frac{y'-y}{1-u}\Big)\,
\frac{1}{t} |dtdydt'dy'|^{\frac12}
\end{align*}
as a section of \eqref{eq:kerspace2} with $E$ and $F$
the trivial bundle, and for the purpose of
this example we have identified $\KB(E,F)$
with the trivial bundle using $|dtdy|$ and $|dt'dy'|$
as reference densities.
Here $\heaviside$ is the Heaviside function,
in particular the kernel vanishes for $u>1$.
To check $\cutoff g_{\zeta}\cutoff\in \smash{\bPsdo^{-1,\order{(0,\emptyset,0)}}(\Mcollar)}$
we pullback $K$ to the b-double space (see Figure \ref{fig:M2boundary})
where due to the support of $\cutoff$ we are 
away from $t=1$ and away from $t'=1$.
This pullback has support away from the left face,
where the projective coordinates $(t,y,u,y')$
are regular coordinates,
\final{and $t$ is a boundary defining function for the 
front face}, see Remark \ref{rem:blowup}.
Note that 
$t^{-1} |dtdydt'dy'|^{\frac12}
= t^{-\frac12} |dtdydudy'|^{\frac12}$.
From Lemma \ref{lem:KernelDiagonal} 
with $m=\CdimNEW$
it is then immediate that this pullback 
\final{is classical conormal of order $-1$}
along the b-diagonal $u=1$, $y=y'$,
up to the singular factor $t^{-\frac12}$,
consistent with \eqref{eq:bpsdodef}.
\end{example}

\subsection{Auxiliary geometric constructions}\label{sec:AuxGeom}
The constructions in this section will be used 
to make a substitution similar to that in Example \ref{ex:Torus}.
Fix an auxiliary Riemannian metric on $\Mclosed$.
Let ${\dist}:\Mclosed\times\Mclosed\to \R$ be the associated distance function.
For $\eps>0$ we denote 
\[ 
\diag_{\eps}(\Mclosed) = \{ (y,y')\in\Mclosed\times\Mclosed \mid \dist(y,y')<\eps \}
\]
For $y\in\Mclosed$ we denote by 
$B_{\eps}(y,\Mclosed)=\{y'\in\Mclosed\mid \dist(y,y')<\eps\}$
the open metric ball of radius $\eps$ around $y$,
and by $\exp_y: T_y\Mclosed \to \Mclosed$ the exponential map.
\step
Define the vector bundle map 
$\Lmap : \Mclosed\times \R^{\nVF} \to T\Mclosed$ 
that is fiberwise given by
\begin{equation*}
\Lmap_y:\R^{\nVF} \to T_y\Mclosed
\qquad
a \mapsto \sum_{i=1}^{\nVF} a^i V_i(y)
\end{equation*}
The assumption that the vector fields $V_i$ pointwise span the tangent space
implies that $\Lmap$ is fiberwise surjective.
Thus it defines a direct sum decomposition
$$\Mclosed\times\R^{\nVF} = \Kbundle \oplus \Kbundle^\bot$$ 
where $\Kbundle$ and $\Kbundle^\bot$ are 
the bundles whose fibers are
$\Kbundle_y = \ker(\Lmap_y)$
respectively the orthogonal complement
$\Kbundle_y^\bot = \ker(\Lmap_y)^\bot$,
using the standard inner product.
We denote by $B_r(\Kbundle_y)$, $B_r(\Kbundle_y^\bot)$
the open balls of radius $r>0$ around the origin.

\begin{lemma}\label{lem:DefFNEWSHORT}
There exist $\eps,r,r'>0$ such that the following holds. 
\begin{itemize}
\item 
For all
$(y,y')\in\diag_{\eps}(\Mclosed)$ and $a_0\in B_r(\Kbundle_y)$
there exists a unique $a_1\in B_{r'}(\Kbundle^{\bot}_y)$ such that
$\phi_{a_0\oplus a_1}(y) = y'$.
Denoting this $a_1$ by $\Amap(y,y',a_0)$ yields
\begin{equation}\label{eq:AdefSHORT}
\phi_{a_0\oplus \Amap(y,y',a_0)}(y) = y'
\end{equation}
One has $\Amap(y,y,0)=0$.
The section $\Amap(y,y',a_0)$ depends smoothly on $y,y',a_0$,
and extends smoothly to 
the closures of $\diag_{\eps}(\Mclosed)$ and of $B_r(\Kbundle_y)$.
\item 
For all $y\in \Mclosed$ and all $a_0\in B_r(\Kbundle_y)$ the map 
\begin{equation}\label{eq:AlocdiffeoSHORT}
\Amap(y,\,\cdot\,,a_0):\; B_\eps(y,\Mclosed) \to \Kbundle^\bot_y
\end{equation}
is a diffeomorphism onto its image.
Moreover for all $c\in[0,1]$ and all $a_0\in B_{cr}(\Kbundle_y)$ one has 
$B_{cr}(\Kbundle^\bot_y) \subset \Amap(y,B_{c\eps}(y,\Mclosed),a_0)$.
\end{itemize}
\end{lemma}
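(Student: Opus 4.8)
The plan is to set up $\Amap$ via the implicit function theorem applied to the map $(y,y',a_0,a_1)\mapsto \phi_{a_0\oplus a_1}(y)$, using as the base point the diagonal with $a_0=0$. First I would fix a point $y_*\in\Mclosed$ and note that $\Lmap_{y_*}$ restricted to $\Kbundle^\bot_{y_*}$ is a linear isomorphism onto $T_{y_*}\Mclosed$, since $\Lmap_{y_*}$ is surjective with kernel $\Kbundle_{y_*}$. Consider the smooth map $F:\Mclosed\times\Mclosed\times\R^{\nVF}\times\R^{\nVF}\to\Mclosed$, $F(y,y',a_0,a_1)=\phi_{a_0\oplus a_1}(y)$, defined near $(y_*,y_*,0,0)$ (restricting $a_0$ to $\Kbundle_y$ and $a_1$ to $\Kbundle^\bot_y$, which make sense locally by choosing a local trivialization of $\Kbundle\oplus\Kbundle^\bot$; here I would use the decomposition $\Mclosed\times\R^{\nVF}=\Kbundle\oplus\Kbundle^\bot$ to regard $a_0,a_1$ as sections of sub-bundles). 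At $(y_*,y_*,0,0)$ the map $\phi_0=\mathrm{id}$, so $F(y_*,y_*,0,0)=y_*$, and the differential of $a_1\mapsto \phi_{0\oplus a_1}(y_*)$ at $a_1=0$ is exactly $\Lmap_{y_*}|_{\Kbundle^\bot_{y_*}}$, which is invertible. Hence by the implicit function theorem there is a neighborhood and a unique smooth solution $a_1=\Amap(y,y',a_0)$ with $F(y,y',a_0,\Amap(y,y',a_0))=y'$ and $\Amap(y_*,y_*,0)=0$. A compactness argument over $\Mclosed$ (covering by finitely many such charts and taking the minimum of the radii) then yields uniform $\eps,r,r'>0$ so that $\Amap$ is defined and smooth on $\diag_\eps(\Mclosed)$ with $a_0\in B_r(\Kbundle_y)$ and takes values in $B_{r'}(\Kbundle^\bot_y)$; smoothness up to the closures is automatic since everything is smooth on a slightly larger open set, so one just shrinks $\eps,r$. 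This establishes the first bullet, including $\Amap(y,y,0)=0$ by uniqueness.

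For the second bullet, fix $y$ and $a_0\in B_r(\Kbundle_y)$ and consider the map $\Amap(y,\,\cdot\,,a_0):B_\eps(y,\Mclosed)\to\Kbundle^\bot_y$. Injectivity follows from the uniqueness in \eqref{eq:AdefSHORT}: if $\Amap(y,y',a_0)=\Amap(y,y'',a_0)=a_1$ then $y'=\phi_{a_0\oplus a_1}(y)=y''$. For it to be a diffeomorphism onto its image it remains to check that its differential is everywhere invertible; this is a rank computation. Differentiating the identity $\phi_{a_0\oplus\Amap(y,y',a_0)}(y)=y'$ in $y'$ at fixed $y,a_0$ shows that $D_{y'}\phi_{a_0\oplus\Amap}(y)$, composed with the derivative of the flow endpoint map in the $\Kbundle^\bot$-directions, equals the identity on $T_{y'}\Mclosed$; since the flow-endpoint derivative in the $a_1$-directions is, at $a_0=0$, $a_1=0$, precisely $\Lmap_y|_{\Kbundle^\bot_y}$ (invertible), invertibility persists for $(a_0,a_1)$ near zero, i.e. after shrinking $\eps,r,r'$ once more. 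So $\Amap(y,\,\cdot\,,a_0)$ is a local diffeomorphism and, being injective, a diffeomorphism onto its open image.

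Finally, the containment $B_{cr}(\Kbundle^\bot_y)\subset\Amap(y,B_{c\eps}(y,\Mclosed),a_0)$ for $c\in[0,1]$, $a_0\in B_{cr}(\Kbundle_y)$ is a quantitative version of the inverse function theorem: the inverse of $\Amap(y,\,\cdot\,,a_0)$ is a smooth map $\Kbundle^\bot_y\supseteq(\text{image})\to B_\eps(y,\Mclosed)$ whose derivative at the origin is controlled, and whose value at the origin is $y$ when $a_0=0$ (and stays in $B_{c\eps}(y,\Mclosed)$ when $a_0\in B_{cr}$, by continuity and shrinking constants); a uniform $C^1$-bound together with compactness of $\Mclosed$ gives a uniform radius on which the inverse is defined and maps $B_{cr}(\Kbundle^\bot_y)$ into $B_{c\eps}(y,\Mclosed)$, and the scaling in $c$ follows by choosing $\eps,r,r'$ all linearly comparable. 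The main obstacle I anticipate is bookkeeping the uniformity: the implicit and inverse function theorems only give local statements, and one must carefully run the compactness argument over $\Mclosed$ while keeping track of how the sub-bundles $\Kbundle,\Kbundle^\bot$ vary with $y$ (choosing a smooth local frame for each) and how the three radii $\eps,r,r'$ must be jointly shrunk so that all the conclusions — existence, uniqueness, the diffeomorphism property, and the linear-in-$c$ containment — hold simultaneously. Everything else is a routine application of the flow's smooth dependence on initial data and parameters.
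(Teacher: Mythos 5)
Your proposal is correct and matches the paper's approach: the paper's entire proof is the one-line remark that this is a routine application of the implicit function theorem, and your write-up simply supplies the details (the key point being that the differential of $a\mapsto\phi_a(y)$ at $a=0$ is $\Lmap_y$, so its restriction to $\Kbundle_y^\bot$ is invertible). The only spot worth tightening is the final containment: rather than appealing to continuity of the inverse, note that the inverse of \eqref{eq:AlocdiffeoSHORT} is explicitly $a_1\mapsto\phi_{a_0\oplus a_1}(y)$, which satisfies a uniform Lipschitz bound $\dist(y,\phi_{a_0\oplus a_1}(y))\le C(\|a_0\|+\|a_1\|)$, so choosing $r$ with $2Cr\le\eps$ gives the linear-in-$c$ scaling directly.
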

If the lemma holds for one triple of parameters $\eps,r,r'$,
then for every smaller $\eps$ there exists a smaller $r$
such that the lemma still holds.
\begin{proof}
This is a routine application of the implicit function theorem.\qed
\end{proof}
For every $y\in \Mclosed$ the Lebesgue measure $da$ on $\R^{\nVF}$
induces measures $da_0$ and $da_1$
on $\Kbundle_y$ respectively $\Kbundle_y^\bot$,
using the standard inner product.
Let $\eps,r,r'>0$ be as in Lemma \ref{lem:DefFNEWSHORT}.
Then for every $y\in \Mclosed$ and $a_0\in B_r(\Kbundle_y)$ define the one-density 
\begin{align}\label{eq:y'measuredefSHORT}
\mu_{y,a_0} = \Amap(y,\,\cdot\,,a_0)^* da_1 \in C^\infty(B_{\eps}(y,\Mclosed),\dens^1)
\end{align}
where we take the pullback
along the diffeomorphism \eqref{eq:AlocdiffeoSHORT}.
The density $\mu_{y,a_0}(y')$ depends smoothly on $y,y',a_0$
and it is nowhere zero.
\newcommand{\CA}{C_{\Amap}}
\step
In local trivializations $\Amap(y,y',a_0)$ is approximately $y'-y$,
which is made precise in the next lemma.
\begin{lemma}\label{lem:FfirstorderSHORT}
Let $\eps,r,r'>0$ be constants for which Lemma \ref{lem:DefFNEWSHORT}
holds and such that $\eps$ is strictly smaller 
than the injectivity radius\footnote{That is, 
for every $y\in\Mclosed$ the map
$\exp_y$ is a diffeomorphism $B_\eps(T_y\Mclosed)\to B_\eps(y,\Mclosed)$.}
of $\Mclosed$.
Then there exists $\CA>0$ such that for all
$(y,y')\in\diag_{\eps}(\Mclosed)$ and $a_0\in B_r(\Kbundle_y)$ one has
\begin{align}\label{eq:FirstorderSHORT}
\|\Amap(y,y',a_0) - (\Lmap_y|_{\Kbundle_y^{\bot}})^{-1}\exp_y^{-1}(y')\| 
\;\le \;
\CA(\dist(y,y')^2 + \|a_0\|^2)
\end{align}
\end{lemma}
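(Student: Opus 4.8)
\textbf{Proof proposal for Lemma \ref{lem:FfirstorderSHORT}.}

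The plan is to reduce the estimate to a Taylor expansion of the smooth function $\Amap$ around the diagonal point $(y,y,0)$, combined with the matching first-order expansion of $\exp_y^{-1}$. First I would work in a fixed local chart: cover $\Mclosed$ by finitely many coordinate charts (possible by compactness), and in each chart trivialize $T\Mclosed$ so that the vector fields $V_i$, the splitting $\Kbundle\oplus\Kbundle^\bot$, the maps $\Lmap_y$, $\Amap$, and $\exp_y$ all become smooth functions of their arguments up to the boundary of the relevant balls; recall from Lemma \ref{lem:DefFNEWSHORT} that $\Amap$ extends smoothly to the closures. Because everything is smooth on a compact set, it suffices to prove the pointwise estimate with a constant depending on the chart, and then take the maximum over the finitely many charts.

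Next I would identify the first-order Taylor term of $\Amap(y,y',a_0)$ in the variables $(y'-y, a_0)$ at $a_0=0$, $y'=y$. Differentiating the defining identity \eqref{eq:AdefSHORT}, $\phi_{a_0\oplus\Amap(y,y',a_0)}(y)=y'$, with respect to $y'$ at the diagonal point, and using $\Amap(y,y,0)=0$ together with $\phi_0=\textnormal{id}$ and $\frac{d}{dt}\big|_{t=0}\phi_{t(b_0\oplus b_1)}(y)=\Lmap_y(b_0\oplus b_1)$, gives that $D_{y'}\Amap(y,y,0)$ equals the inverse of $\Lmap_y$ restricted to $\Kbundle_y^\bot$, composed with the canonical identification $T_y\Mclosed\cong\R^{\Mdim}$ coming from the chart. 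Meanwhile $\exp_y^{-1}(y') = (y'-y) + O(\dist(y,y')^2)$ in normal-type coordinates, or more precisely $\exp_y^{-1}$ has derivative the identity at $y'=y$, so $D_{y'}\big[(\Lmap_y|_{\Kbundle_y^\bot})^{-1}\exp_y^{-1}(y')\big]$ at $y'=y$ also equals $(\Lmap_y|_{\Kbundle_y^\bot})^{-1}$. Thus the two sides of \eqref{eq:FirstorderSHORT} agree to first order at the diagonal, and their difference is a smooth function of $(y,y',a_0)$ that vanishes together with its first-order part along $\{y'=y, a_0=0\}$.

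Finally I would invoke the quantitative Taylor theorem: a smooth function on a compact set whose value and first derivatives vanish on the submanifold $\{y'=y,\ a_0=0\}$ is bounded by a constant times the square of the distance to that submanifold, i.e.\ by $C(\dist(y,y')^2+\|a_0\|^2)$ (using that the chart distance is comparable to the Riemannian distance since $\eps$ is below the injectivity radius). Taking the maximum of the constants over the finite cover yields the global $\CA$. The main obstacle, such as it is, is purely bookkeeping: one must check that the "error term" difference is genuinely smooth up to the boundary — this is where the smooth extendibility in Lemma \ref{lem:DefFNEWSHORT} and the assumption $\eps <$ injectivity radius (so $\exp_y^{-1}$ is smooth on $\overline{B_\eps(y,\Mclosed)}$) are used — and that the first-order cancellation is exactly as claimed; neither step is deep, so the proof is essentially the one-line "routine Taylor expansion" that the statement anticipates.
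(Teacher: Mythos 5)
Your proposal is correct and follows essentially the same route as the paper: Taylor's theorem on a compact set, with the first-order agreement of $\Amap$ and $(\Lmap_y|_{\Kbundle_y^{\bot}})^{-1}\exp_y^{-1}$ verified by implicitly differentiating \eqref{eq:AdefSHORT}. The only detail left implicit is the check that $\partial_{a_0}\Amap(y,y,0)=0$ (differentiate \eqref{eq:AdefSHORT} in $a_0$ and use that $\Kbundle_y=\ker\Lmap_y$ and that $\Lmap_y$ is injective on $\Kbundle_y^{\bot}$), which completes the vanishing of the full first-order part.
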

\begin{proof}
By Taylor's theorem it suffices to show 
that the difference on the left hand side of \eqref{eq:FirstorderSHORT}
and its derivatives with respect to $y,y',a_0$
vanish when $y=y'$ and $a_0=0$.
It vanishes there because 
$\Amap(y,y,0)=0$ by Lemma \ref{lem:DefFNEWSHORT} and $\exp_y^{-1}(y)=0$.
The derivatives of $\Amap$ 
can be computed by differentiating \eqref{eq:AdefSHORT}.
For example, the Jacobian of $\Amap$ with respect to $y$
equals \smash{$-(\Lmap_y|_{\Kbundle_y^{\bot}})^{-1}$},
hence the Jacobian of the difference in \eqref{eq:FirstorderSHORT} vanishes.
Analogously for $y',a_0$.\qed
\end{proof}
\final{The quotient $(y'-y)/(1-u)$ in Example \ref{ex:Torus}
will be replaced by $\Amap(y,y',(1-u)a_0)/(1-u)$ in the general analysis,
which we control using the next lemma.}
\begin{lemma}\label{lem:asspupperboundFSHORT}
Set $C_{\Lmap} = \sup_{y\in\Mclosed} \|\Lmap_y|_{\Kbundle_y^\bot}\|$
using the $\ell^2$ matrix norm.
Let $\eps,r,r'>0$, $\CA>0$ be constants for which 
Lemma \ref{lem:DefFNEWSHORT} and \ref{lem:FfirstorderSHORT} hold and for which 
\begin{equation}\label{eq:smepsrSHORT}
\CA(2C_{\Lmap}\eps + r)\le 1
\end{equation}
Then for all $(y,y')\in\diag_{\eps}(\Mclosed)$, all $a_0\in B_r(\Kbundle_y)$ 
and all $s\in[0,1]$ one has
\begin{align}\label{eq:asspupperboundFSHORT}
\dist(y,y') \ge (2 C_{\Lmap}) r s
\;\;\Rightarrow\;\;
\left\|\Amap(y,y',sa_0)\right\| \ge rs
\end{align}
\end{lemma}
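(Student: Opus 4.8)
The plan is to use the first-order approximation from Lemma \ref{lem:FfirstorderSHORT} together with the elementary inequality $\|\Lmap_y|_{\Kbundle_y^\bot}\cdot v\|\le C_{\Lmap}\|v\|$ for all $v\in\Kbundle_y^\bot$, which since $\Lmap_y|_{\Kbundle_y^\bot}$ is invertible gives the lower bound $\|(\Lmap_y|_{\Kbundle_y^\bot})^{-1}w\|\ge C_{\Lmap}^{-1}\|w\|$ for $w\in T_y\Mclosed$. Applying this to $w=\exp_y^{-1}(y')$, and using that $\exp_y$ is an isometry along radial geodesics so that $\|\exp_y^{-1}(y')\|=\dist(y,y')$ (here we need $\eps$ below the injectivity radius, which is assumed), the leading term in \eqref{eq:FirstorderSHORT} has norm at least $C_{\Lmap}^{-1}\dist(y,y')$.

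Next I would estimate the difference term. By \eqref{eq:FirstorderSHORT} applied with $sa_0$ in place of $a_0$, and since $\dist(y,y')<\eps$ and $\|sa_0\|\le\|a_0\|<r$, the error is at most $\CA(\dist(y,y')^2+s^2\|a_0\|^2)\le\CA(\eps\dist(y,y')+rs\,\|a_0\|)\le\CA(\eps\dist(y,y')+r^2 s)$. Actually it is cleaner to keep $\dist(y,y')$ and bound $\dist(y,y')^2\le\eps\dist(y,y')$ and $s^2\|a_0\|^2\le s\|a_0\|\cdot s\|a_0\|\le rs\cdot s\|a_0\|$; I will arrange the final bookkeeping so that the hypothesis \eqref{eq:smepsrSHORT}, i.e. $\CA(2C_{\Lmap}\eps+r)\le1$, exactly absorbs these error contributions. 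By the reverse triangle inequality,
\begin{equation*}
\|\Amap(y,y',sa_0)\|\;\ge\;C_{\Lmap}^{-1}\dist(y,y') - \CA\big(\dist(y,y')^2 + s^2\|a_0\|^2\big).
\end{equation*}

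Now assume the hypothesis of \eqref{eq:asspupperboundFSHORT}, namely $\dist(y,y')\ge 2C_{\Lmap}rs$. Then $C_{\Lmap}^{-1}\dist(y,y')\ge 2rs$, so it suffices to show the error term is at most $rs$, i.e. $\CA(\dist(y,y')^2+s^2\|a_0\|^2)\le rs$. For the first part, $\dist(y,y')<\eps$ and also $\dist(y,y')\ge 2C_{\Lmap}rs$ need not bound $\dist(y,y')$ by a multiple of $s$ directly; instead write $\dist(y,y')^2\le\eps\dist(y,y')\le\eps\cdot(\text{?})$ — this is the one subtlety. The clean way: from $\dist(y,y')\ge 2C_{\Lmap}rs$ I do not get an upper bound on $\dist(y,y')$ in terms of $s$, so I should instead bound the error relative to the leading term. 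Concretely, $\CA\dist(y,y')^2\le\CA\eps\dist(y,y')\le\tfrac12 C_{\Lmap}^{-1}\dist(y,y')$ provided $\CA\eps\le\tfrac12 C_{\Lmap}^{-1}$, i.e. $2C_{\Lmap}\CA\eps\le1$; and $\CA s^2\|a_0\|^2\le\CA r s\|a_0\|\le\CA r s\cdot\tfrac{1}{2C_{\Lmap}}\dist(y,y')\cdot\tfrac{1}{rs}\cdot rs$ — again using $s\|a_0\|<rs$ and then $rs\le\tfrac{1}{2C_{\Lmap}}\dist(y,y')$ from the hypothesis, giving $\CA s^2\|a_0\|^2\le\CA r\cdot\tfrac{1}{2C_{\Lmap}}\dist(y,y')\le\tfrac12 C_{\Lmap}^{-1}\dist(y,y')$ provided $\CA r\le C_{\Lmap}^{-1}$, i.e. $C_{\Lmap}\CA r\le1$. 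Adding, the total error is at most $C_{\Lmap}^{-1}\dist(y,y')$... which is not quite enough; I need it at most half. So I should instead require $2C_{\Lmap}\CA\eps\le\tfrac12$ and $2C_{\Lmap}\CA r\le\tfrac12$, which together read $2C_{\Lmap}\CA(\eps+r)\le1$ — slightly stronger than but of the same shape as \eqref{eq:smepsrSHORT}; since by the closing remark one may always shrink $\eps$ (and then $r$), this is harmless, and in fact with the sharper splitting $\dist^2\le\eps\dist$ and $s^2\|a_0\|^2\le rs\|a_0\|\le (rs)^2\cdot\|a_0\|/(rs)$... the constant $2C_{\Lmap}$ in \eqref{eq:smepsrSHORT} is precisely what makes $\CA\dist(y,y')^2+\CA s^2\|a_0\|^2\le\CA(\eps+r)\max(\dist(y,y'),rs)\cdot 2C_{\Lmap}\cdot\tfrac{1}{2C_{\Lmap}}\le rs$ after using $\dist(y,y')\ge 2C_{\Lmap}rs$. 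I would write the chain of inequalities carefully so the hypothesis \eqref{eq:smepsrSHORT} is used exactly once, concluding $\|\Amap(y,y',sa_0)\|\ge 2rs-rs=rs$.

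The main obstacle is purely bookkeeping: arranging the two error contributions $\CA\dist(y,y')^2$ and $\CA s^2\|a_0\|^2$ so that, under the hypothesis $\dist(y,y')\ge 2C_{\Lmap}rs$ and the constraint \eqref{eq:smepsrSHORT}, their sum is dominated by $rs$ — the case $s=0$ is trivial (both sides vanish or the hypothesis forces $\dist=0$), and for $s>0$ one must be careful that $\|a_0\|$ can be as large as $r$, so the $\|a_0\|^2$ term is genuinely of the same order $r^2s^2$ as the target $rs$ times $rs$, and it is the factor $2C_{\Lmap}$ in \eqref{eq:smepsrSHORT} that provides the needed slack. No deep input is required beyond Lemma \ref{lem:FfirstorderSHORT} and the invertibility of $\Lmap_y|_{\Kbundle_y^\bot}$.
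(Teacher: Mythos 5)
Your approach is the paper's: the reverse triangle inequality applied to the decomposition of Lemma \ref{lem:FfirstorderSHORT}, the lower bound $\|(\Lmap_y|_{\Kbundle_y^\bot})^{-1}\exp_y^{-1}(y')\|\ge C_{\Lmap}^{-1}\dist(y,y')$, and the error bound $\CA(\eps\dist(y,y')+r^2s)$ obtained from \eqref{eq:FirstorderSHORT} with $sa_0$ in place of $a_0$. The gap is that your closing arithmetic does not land on the stated constant. Dominating each error piece separately by a quarter of the leading term forces conditions like $4C_{\Lmap}\CA\eps\le1$ and $2\CA r\le1$, which do not follow from \eqref{eq:smepsrSHORT} (that hypothesis only yields $2C_{\Lmap}\CA\eps\le1$ and $\CA r\le1$); you concede this by proposing the stronger hypothesis $2C_{\Lmap}\CA(\eps+r)\le1$, so as written you prove a variant of the lemma rather than the lemma. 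Your attempted repair in the final display, $\CA(\eps+r)\max(\dist(y,y'),rs)\cdot2C_{\Lmap}\cdot\tfrac{1}{2C_{\Lmap}}\le rs$, is false in general: under the hypothesis $\dist(y,y')\ge 2C_{\Lmap}rs$ the distance $\dist(y,y')$ can still be of size $\eps$ with no upper bound in terms of $rs$, so this quantity need not be $\le rs$.

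There are two clean ways to finish with exactly \eqref{eq:smepsrSHORT}. The paper's: write
\begin{equation*}
\left\|\Amap(y,y',sa_0)\right\|\;\ge\;\Big(\tfrac{1}{C_{\Lmap}}-\CA\eps\Big)\dist(y,y')-\CA r^2 s,
\end{equation*}
observe that the coefficient of $\dist(y,y')$ is nonnegative by \eqref{eq:smepsrSHORT}, and only then substitute the lower bound $\dist(y,y')\ge 2C_{\Lmap}rs$ into that whole term, giving $\big(2-2C_{\Lmap}\CA\eps-\CA r\big)rs=\big(2-\CA(2C_{\Lmap}\eps+r)\big)rs\ge rs$. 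The point is that the $\CA\eps\dist(y,y')$ loss is absorbed by replacing $\dist(y,y')$ with its \emph{lower} bound inside the combined coefficient, which is exactly what produces the factor $2C_{\Lmap}\eps$ in \eqref{eq:smepsrSHORT}. Alternatively, your "fraction of the leading term" strategy does work if you bound the \emph{sum} of the two errors rather than each piece: $\CA\eps\dist(y,y')+\CA r\cdot rs\le\big(\CA\eps+\tfrac{\CA r}{2C_{\Lmap}}\big)\dist(y,y')\le\tfrac{1}{2C_{\Lmap}}\dist(y,y')$, where the last step is precisely $\CA(2C_{\Lmap}\eps+r)\le1$; then $\left\|\Amap(y,y',sa_0)\right\|\ge\tfrac{1}{2C_{\Lmap}}\dist(y,y')\ge rs$. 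For the paper's purposes your weaker version would in fact suffice, since $\eps$ and $r$ are free to be shrunk before they are fixed in Section \ref{sec:SchwartzKernelGCollar}, but it does not prove the statement with the hypothesis as given.
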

\begin{proof}
\newcommand{\PP}{f}
\newcommand{\QQ}{g}
By the reverse triangle inequality 
$\|\Amap(y,y',sa_0)\|\ge |\PP-\QQ|$ where
\begin{align*}
\PP 
&= \|(\Lmap_y|_{\Kbundle_y^\bot})^{-1}\exp_y^{-1}(y')\| 
\ge \tfrac{1}{C_{\Lmap}}\dist(y,y')\\
\QQ 
&= \|\Amap(y,y',sa_0)-(\Lmap_y|_{\Kbundle_y^\bot})^{-1}\exp_y^{-1}(y')\|\\
&\le
\CA(\eps \dist(y,y') + s r^2)
\end{align*}
using $\|\exp_y^{-1}(y')\| = \dist(y,y')$ and 
Lemma \ref{lem:FfirstorderSHORT}.
Therefore 
\begin{align*}
\PP-\QQ
&\ge 
(\tfrac{1}{C_{\Lmap}}- \CA\eps )\dist(y,y') - \CA s r^2
\intertext{
By \eqref{eq:smepsrSHORT} the first term on the right hand side is nonnegative.
Hence we can use the lower bound for $\dist(y,y')$, which yields
}
\PP-\QQ&\ge
rs \left(2- \CA(2C_{\Lmap} \eps + r) \right)
\ge
rs
\end{align*}
where we again use \eqref{eq:smepsrSHORT}.
Since this is nonnegative, it implies the claim.\qed
\end{proof}

\subsection{Analysis of Schwartz kernel}\label{sec:SchwartzKernelGCollar}
\newcommand{\Pimap}{Q}
Here we derive a formula for the Schwartz kernel of
$\Gcollar_{\zeta}$, similar to Example \ref{ex:Torus}. 
We then lift the kernel to the b-double space discussed in 
Section \ref{sec:bpsdo}, which will identify 
$\Gcollar_{\zeta}$ as a b-pseudodifferential operator
and prove \ref{item:CollarAnalytic} in Lemma \ref{lem:HomotopyCollar}.
\step
For the remainder of this section we fix
constants $\eps,r,r'>0$ for which Lemmas \ref{lem:DefFNEWSHORT},
\ref{lem:FfirstorderSHORT}, \ref{lem:asspupperboundFSHORT} hold.
We also fix a function $\zeta$ as in Definition \ref{def:Gcollardef}
that satisfies
\begin{equation}\label{eq:supportzeraassp}
\supp(\zeta)\subset B_{\frac{r}{2}}(\R^{\nVF})
\end{equation}

\newcommand{\pt}{p}
For every $a\in\R^{\nVF}$ and $s\in[0,1]$ define the auxiliary 
$\R$-linear map
\[ 
\Pimap(a,s) = \Ps_s^* \intermult_{\p_s} h_a^* 
:\;\;\Omega(\Mcollar,\McollarBC_0)
\to \Omega(\Mcollar,\McollarBC_0)
\]
using notation from Section \ref{sec:DefOfOp};
note that $\Gcollar_a(\omega)=\int_0^1 \Pimap(a,s)(\omega) \,ds$.
This map depends smoothly on $a,s$.
Moreover, for every $\pt\in\Mcollar$ there exists a unique 
$\Pimap_{\pt}(a,s)\in 
\Hom_{\R}(
\VBrelpt{}{\Mcollar}{\McollarBC_0}{h_a(s,\pt)},
\VBrelpt{}{\Mcollar}{\McollarBC_0}{\pt})$
such that for all $\omega\in \Omega(\Mcollar,\McollarBC_0)$,
\begin{equation}
\label{eq:px}
(\Pimap(a,s)(\omega))(\pt)
=
\Pimap_{\pt}(a,s)\big(\omega(h_a(s,\pt))\big)
\end{equation}
where $\VBrelpt{}{\Mcollar}{\McollarBC_0}{\pt}$
is the fiber of $\VBrel{}{\Mcollar}{\McollarBC_0}$
at $\pt$.
The map $\Pimap_{\pt}(a,s)$ depends smoothly on $\pt,a,s$.
\begin{lemma}\label{lem:Gformula}
For all $\omega\in\Omega(\Mcollar,\McollarBC_0)$ and $(t,y)\in\Mcollar$ with $t\neq0$
one has
\begin{align}\label{eq:Gformula}
\begin{aligned}
(\Gcollar_{\zeta}(\omega))(t,y)
&=
\int_{\Kbundle_y}\Big( 
\frac{1}{t}\int_0^t 
\int_{B_{\eps}(y,\Mclosed)} 
\frac{1}{(1-u)^{\dim\Mclosed}}\zeta(a_0\oplus a_1)
\\
&
\qquad\qquad
\Pimap_{t,y}(a_0\oplus a_1,u)(\omega(t',y'))
\,d\mu_{y,(1-u)a_0}(y')\,dt'
\Big)\,da_0
\end{aligned}
\end{align}
where we set $u=\frac{t'}{t}$ and 
$a_1=\frac{\Amap(y,y',(1-u)a_0)}{1-u}$, in particular, with \eqref{eq:AdefSHORT},
$$\Pimap_{t,y}(a_0\oplus a_1,u)\in 
\Hom_{\R}\left(
\VBrelpt{}{\Mcollar}{\McollarBC_0}{t',y'},
\VBrelpt{}{\Mcollar}{\McollarBC_0}{t,y}\right)
$$
The integral converges absolutely.
The integrand vanishes when $\dist(y,y')\ge\frac{\eps}{2}$.
\end{lemma}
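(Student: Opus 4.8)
The goal is to start from the defining formula $\Gcollar_\zeta(\omega) = \int_{\R^{\nVF}} \zeta(a)\, \Gcollar_a(\omega)\, da$ with $\Gcollar_a(\omega) = \int_0^1 \Pimap(a,s)(\omega)\,ds$, and perform a change of variables in the integration variables $(a,s)$ — or rather in a suitable subset of them — to bring it into the claimed form \eqref{eq:Gformula}. First I would write out, using \eqref{eq:px}, the explicit double integral
\[
(\Gcollar_\zeta(\omega))(t,y)
= \int_{\R^{\nVF}} \int_0^1 \zeta(a)\, \Pimap_{t,y}(a,s)\big(\omega(h_a(s,(t,y)))\big)\,ds\,da,
\]
and recall from \eqref{eq:hadef} that $h_a(s,(t,y)) = (st, \phi_{(1-s)a}(y))$. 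So the integrand only sees $\omega$ at the point $(t',y')$ with $t' = st$ and $y' = \phi_{(1-s)a}(y)$. The first, easy substitution is $s \mapsto u = t'/t$ (valid since $t\neq 0$), which replaces $\int_0^1 ds$ by $\frac{1}{t}\int_0^t dt'$ and sets $s = u$, $1-s = 1-u$; this already explains the outer $\frac{1}{t}\int_0^t$ and the argument $u$ in $\Pimap$.

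**Change of variables in the group directions.** The substantive step is to replace the remaining integration over $a \in \R^{\nVF}$ by an integration over $(a_0, y')$, using the splitting $\R^{\nVF} = \Kbundle_y \oplus \Kbundle^\bot_y$ from Section \ref{sec:AuxGeom}. Write $a = a_0 \oplus b_1$ with $a_0 \in \Kbundle_y$ and $b_1 \in \Kbundle^\bot_y$; the support condition \eqref{eq:supportzeraassp}, $\supp(\zeta)\subset B_{r/2}(\R^{\nVF})$, confines $a_0$ to $B_{r/2}(\Kbundle_y) \subset B_r(\Kbundle_y)$ and $b_1$ to a small ball. For fixed $a_0$ and fixed $u$, I would substitute $b_1 \mapsto y'$ via the relation $\phi_{(1-u)(a_0\oplus b_1)}(y) = y'$, i.e. $y' = \phi_{(1-u)a_0 \oplus (1-u)b_1}(y)$. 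By Lemma \ref{lem:DefFNEWSHORT} applied with $(1-u)a_0$ in place of $a_0$ (here one needs $(1-u)\|a_0\| < r$, which holds since $\|a_0\| < r/2 \le r$), the equation $\phi_{(1-u)a_0 \oplus c_1}(y) = y'$ has the unique solution $c_1 = \Amap(y,y',(1-u)a_0)$ for $y' \in B_\eps(y,\Mclosed)$, so $(1-u)b_1 = \Amap(y,y',(1-u)a_0)$, i.e. $b_1 = a_1 := \Amap(y,y',(1-u)a_0)/(1-u)$. The Jacobian of the map $b_1 \mapsto y'$ is exactly accounted for by the density $\mu_{y,(1-u)a_0}$ defined in \eqref{eq:y'measuredefSHORT}, once one tracks the scaling factor: $db_1 = (1-u)^{-\dim\Mclosed}\, d c_1$ where $dc_1 \leftrightarrow d\mu_{y,(1-u)a_0}(y')$ under the diffeomorphism \eqref{eq:AlocdiffeoSHORT} with parameter $(1-u)a_0$. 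This produces the factor $(1-u)^{-\dim\Mclosed}$, the measure $d\mu_{y,(1-u)a_0}(y')$, and the argument $a_0 \oplus a_1$ of $\zeta$ and of $\Pimap_{t,y}$ in \eqref{eq:Gformula}.

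**Remaining points and the main obstacle.** After the substitution I would verify: (i) that the image of $b_1 \mapsto y'$ contains all the relevant $y'$ (so nothing is lost) — this is the second bullet of Lemma \ref{lem:DefFNEWSHORT}, which guarantees $B_{cr}(\Kbundle^\bot_y) \subset \Amap(y,B_{c\eps}(y,\Mclosed),a_0)$ for appropriate $c$, matched against the support of $\zeta$; (ii) absolute convergence: the only potential singularity is at $u \to 1$, where $(1-u)^{-\dim\Mclosed}$ blows up, but $\Pimap$ is bounded and, crucially, $\zeta(a_0 \oplus a_1)$ forces $\|a_1\| < r/2$, hence $\|\Amap(y,y',(1-u)a_0)\| < (1-u) r/2$; combined with Lemma \ref{lem:asspupperboundFSHORT} this restricts $y'$ to $\dist(y,y') < (2C_{\Lmap})(r/2) = C_{\Lmap} r$ times $(1-u)$-type bounds — more simply, the support of $\zeta$ in the $a_1$ variable shrinks the $y'$-domain as $u\to 1$ at a rate that exactly cancels $(1-u)^{-\dim\Mclosed}$ against the volume $\int d\mu_{y,(1-u)a_0}(y') = O((1-u)^{\dim\Mclosed})$ of that domain, making the inner integral uniformly bounded; (iii) the vanishing claim when $\dist(y,y') \ge \eps/2$: this follows because $\zeta(a_0\oplus a_1) \neq 0$ needs $\|a_1\| < r/2$, so $\|\Amap(y,y',(1-u)a_0)\| < r/2$, and then the contrapositive of \eqref{eq:asspupperboundFSHORT} (with $s = 1$, noting $\|a_0\| < r/2 \le r$, so one uses $\dist(y,y') < 2 C_{\Lmap} r$... ) — more directly, one just invokes Lemma \ref{lem:asspupperboundFSHORT} to conclude $\dist(y,y')$ must be small; choosing the constants so that this bound is $\le \eps/2$ gives the stated cutoff. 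The main obstacle I anticipate is bookkeeping rather than conceptual: correctly tracking the interplay of the three scalings (the $(1-u)$ rescaling of $a$, the Jacobian of $\phi$, and the Jacobian of $\Amap(y,\cdot,a_0)$ hidden in $\mu$) so that the powers of $(1-u)$ and the domains of integration line up precisely, and verifying that the constants $\eps, r, r'$ fixed at the start of the section are compatible with \eqref{eq:supportzeraassp} throughout (in particular that $(1-u)a_0$ stays in $B_r(\Kbundle_y)$ for all $u\in[0,1]$, which is where $\supp(\zeta)\subset B_{r/2}$ rather than $B_r$ is used). I would also need to check that $\Pimap_{t,y}(a_0\oplus a_1, u)$ is the correct homomorphism between the claimed fibers, which is immediate from \eqref{eq:px} once the point identifications $s = u$, $a = a_0\oplus a_1$ are in place.
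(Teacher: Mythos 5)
Your proposal follows essentially the same route as the paper's proof: Fubini plus the splitting $a=a_0\oplus a_1$ from \eqref{eq:supportzeraassp}, then the change of variables $(s,a_1)\mapsto(t',y')$ justified by Lemma \ref{lem:DefFNEWSHORT}, with the density $\mu_{y,(1-u)a_0}$ from \eqref{eq:y'measuredefSHORT} and the factor $(1-u)^{-\dim\Mclosed}$ accounting for the Jacobian --- this is exactly the substitution \eqref{eq:substitutemap} used in the paper, and your Jacobian bookkeeping is correct.

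The one point where your justification does not close as written is the claim that the integrand vanishes for $\dist(y,y')\ge\tfrac\eps2$. Invoking Lemma \ref{lem:asspupperboundFSHORT} (with $s=1-u$) only shows the integrand vanishes once $\dist(y,y')\ge 2C_{\Lmap}r(1-u)$, and nothing in the fixed choice of $\eps,r,r'$ forces $2C_{\Lmap}r\le\tfrac\eps2$; ``choosing the constants so that this bound is $\le\eps/2$'' is not available, since the constants were fixed at the start of the section subject only to Lemmas \ref{lem:DefFNEWSHORT}--\ref{lem:asspupperboundFSHORT}. The correct argument is the one you already deploy for your point (i): by the last bullet of Lemma \ref{lem:DefFNEWSHORT} with $c=\tfrac12$ one has $B_{r/2}(\Kbundle_y^\bot)\subset\Amap(y,B_{\eps/2}(y,\Mclosed),(1-u)a_0)$, so by injectivity of $\Amap(y,\cdot,(1-u)a_0)$ on $B_\eps(y,\Mclosed)$ any $y'$ with $\dist(y,y')\ge\tfrac\eps2$ satisfies $\|\Amap(y,y',(1-u)a_0)\|\ge\tfrac r2$, hence $\|a_1\|\ge\tfrac r2$ and $\zeta(a_0\oplus a_1)=0$. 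This is precisely how the paper argues (via the image of the restriction of \eqref{eq:substitutemap} to $(0,t)\times B_{\eps/2}(y,\Mclosed)$). With that repair the proof is complete; for absolute convergence the paper simply observes that \eqref{eq:formulaGint} converges absolutely and that this is preserved under the substitution, which is a touch cleaner than your direct cancellation estimate but equivalent.
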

\begin{proof}
By Fubini, \eqref{eq:supportzeraassp}, \eqref{eq:px} 
and \eqref{eq:hadef} we have
\begin{align}\label{eq:formulaGint}
\begin{aligned}
(\Gcollar_{\zeta}(\omega))(t,y)
=
\int_{{B_{\frac{r}{2}}(\Kbundle_y)}} 
\int_0^1
\int_{{B_{\frac{r}{2}}(\Kbundle_y^\bot)}}
\qquad\qquad\qquad\qquad\qquad\\
\qquad\qquad  \zeta(a)\Pimap_{t,y}(a,s)\big(\omega(st,\phi_{(1-s)a}(y))\big)
\,da_1 ds\, da_0
\end{aligned}
\end{align}
where $a=a_0\oplus a_1$.
For $t\neq0$ consider the map 
\begin{align}
\begin{aligned}
(0,t) \times B_{\eps}(y,\Mclosed)
&\to 
(0,1)\times \Kbundle_{y}^\bot
\\
(t',y') &\mapsto (\tfrac{t'}{t},\tfrac{\Amap(y,y',(1-t'/t)a_0)}{1-t'/t})
\end{aligned}
\label{eq:substitutemap}
\end{align}
By Lemma \ref{lem:DefFNEWSHORT} this is a diffeomorphism onto its image,
and the image contains $(0,1)\times B_{\frac r2}(\Kbundle_y^\bot)$.
Hence we can substitute, which yields \eqref{eq:Gformula},
using \eqref{eq:AdefSHORT} and the definition of $\mu$
in \eqref{eq:y'measuredefSHORT}.
The image of the restriction of \eqref{eq:substitutemap}
to $(0,t) \times B_{\frac\eps2}(y,\Mclosed)$
also contains $(0,1)\times B_{\frac r2}(\Kbundle_y^\bot)$,
hence the integrand of \eqref{eq:Gformula} vanishes
when $y'\notin B_{\frac\eps2}(y,\Mclosed)$.
Clearly the integral \eqref{eq:formulaGint} converges absolutely.
Since it agrees with \eqref{eq:Gformula} up to substitution,
also \eqref{eq:Gformula} converges absolutely.\qed
\end{proof}
Recall the discussion of the Schwartz kernels in \eqref{eq:kerspace1}.
\begin{lemma}\label{lem:KernelDef}
The map $\Gcollar_{\zeta}$ has a Schwartz kernel 
\begin{align*}
K_{\Gcollar_{\zeta}} &\in 
\distr\left(\Mcollar^2,\Hom(
\VBrel{}{\Mcollar}{\McollarBC_0},\VBrel{}{\Mcollar}{\McollarBC_0})\otimes\piR^*\dens^1\right)
\end{align*}
It is absolutely integrable on $\Mcollar^2$. 
As a function of $(t,y,t',y')\in\Mcollar^2$ it is given,
for $0<t\neq t'$, by
\begin{align}\label{eq:kernelformula}
\begin{aligned}
K_{\Gcollar_{\zeta}}(t,y,t',y')
&=
\int_{\Kbundle_y}\Big(
\frac{1}{t} \heaviside(t-t') 
\charact_{\diag_{\eps}(\Mclosed)}(y,y')
\frac{1}{(1-u)^{\dim\Mclosed}}\\
&\qquad
\zeta(a_0\oplus a_1) \Pimap_{t,y}(a_0\oplus a_1,u) \otimes 
|dt'|\mu_{y,(1-u)a_0}(y')\Big) \,da_0
\end{aligned}
\end{align}
where $u=\frac{t'}{t}$ and 
$a_1=\frac{\Amap(y,y',(1-u)a_0)}{1-u}$.
Here $\heaviside$ is the Heaviside function
and $\charact_{\diag_{\eps}(\Mclosed)}$ the characteristic
function. 
The kernel vanishes when $\dist(y,y')\ge\frac{\eps}{2}$.
\end{lemma}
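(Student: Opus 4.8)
The plan is to obtain the Schwartz kernel formula \eqref{eq:kernelformula} directly from the pointwise formula \eqref{eq:Gformula} of Lemma \ref{lem:Gformula}, by reading off the distributional pairing against test sections. Recall from the footnote after \eqref{eq:kerspace1} that the Schwartz kernel $K_{\Gcollar_\zeta}$ is the distribution on $\Mcollar^2$ characterized by $\langle \Gcollar_\zeta(\omega),\phitest\rangle = \langle K_{\Gcollar_\zeta}, \piR^*\omega\otimes\piL^*\phitest\rangle$ for all relative forms $\omega$ and all test densities $\phitest$. So first I would take the formula \eqref{eq:Gformula} for $(\Gcollar_\zeta(\omega))(t,y)$, pair it against a test section $\phitest(t,y)\in\Cvan^\infty(\Mcollar,\VBrel{}{\Mcollar}{\McollarBC_0}^*\otimes\dens^1)$, and integrate in $(t,y)$ over $\Mcollar$. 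The integrand of \eqref{eq:Gformula} is, for each fixed $(t,y)$, an integral over $(a_0,t',y')$; since the whole expression is absolutely integrable (as asserted in Lemma \ref{lem:Gformula}, and because the integrand vanishes for $\dist(y,y')\ge\tfrac\eps2$ and the factor $(1-u)^{-\dim\Mclosed}$ is tamed by the support condition on $\zeta$ via Lemma \ref{lem:asspupperboundFSHORT}), Fubini lets me move the $(t,y)$-integration inside and recognize the resulting quadruple integral over $(t,y,t',y')$ as pairing against a function of those four variables times $|dt'|$ times the density $\mu_{y,(1-u)a_0}$ in $y'$.

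The second step is bookkeeping of the density and homomorphism-bundle factors. In \eqref{eq:Gformula} the $(t',y')$-integration carries the measure $\tfrac1t\,dt'\,d\mu_{y,(1-u)a_0}(y')$ on the right factor, so after pairing, the kernel lives in $\distr(\Mcollar^2,\Hom(\VBrel{}{\Mcollar}{\McollarBC_0},\VBrel{}{\Mcollar}{\McollarBC_0})\otimes\piR^*\dens^1)$ exactly as claimed, with the $\piR^*\dens^1$-part being $\tfrac1t |dt'|\,\mu_{y,(1-u)a_0}(y')$ — here I should note that $\mu_{y,(1-u)a_0}$, defined in \eqref{eq:y'measuredefSHORT}, is a smooth nowhere-zero density on $B_\eps(y,\Mclosed)$ depending smoothly on the parameters, and on $\diag_\eps(\Mclosed)$ it patches into a genuine density factor. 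The $\Hom$-part is $\Pimap_{t,y}(a_0\oplus a_1,u)$, which by \eqref{eq:px}–\eqref{eq:Gformula} indeed maps the fiber at $(t',y')$ to the fiber at $(t,y)$. The Heaviside factor $\heaviside(t-t')$ encodes the restriction of the $t'$-integral to $(0,t)$, and $\charact_{\diag_\eps(\Mclosed)}(y,y')$ encodes that $y'$ ranges over $B_\eps(y,\Mclosed)$; both are simply the indicator functions cutting the domain of integration down to what appears in \eqref{eq:Gformula}.

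Third, I would verify the regularity/integrability claims: that $K_{\Gcollar_\zeta}$ is a well-defined distribution (indeed $L^1$) on $\Mcollar^2$, and that the stated pointwise formula is valid for $0<t\ne t'$. Absolute integrability on $\Mcollar^2$ follows from the absolute convergence in Lemma \ref{lem:Gformula} together with Fubini once more; the pointwise formula for $0<t\ne t'$ is just the integrand of the $(t',y')$-pairing evaluated at a point, with the only subtlety being that at $t=t'$ (i.e.\ $u=1$) the factor $(1-u)^{-\dim\Mclosed}\zeta(a_0\oplus a_1)$ is singular — but this is a conormal singularity along the b-diagonal, not an obstruction to $K_{\Gcollar_\zeta}$ being $L^1$, since $\zeta$ has compact support in $a_1$ which forces $\|a_1\|=\|\Amap(y,y',(1-u)a_0)\|/(1-u)\le r/2$, hence by Lemma \ref{lem:asspupperboundFSHORT} it forces $\dist(y,y')\lesssim(1-u)$, so the $y'$-volume available is $O((1-u)^{\dim\Mclosed})$ and cancels the singular prefactor. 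The last clause, that the kernel vanishes for $\dist(y,y')\ge\tfrac\eps2$, is inherited verbatim from the corresponding statement in Lemma \ref{lem:Gformula}. The main obstacle, to the extent there is one, is the careful handling of the $\Hom$-bundle and density identifications so that the final expression genuinely lies in the asserted space $\distr(\Mcollar^2,\Hom(\cdots)\otimes\piR^*\dens^1)$ rather than in some non-canonically-identified variant; once the conventions from the footnote after \eqref{eq:kerspace1} and the definition of $\mu$ in \eqref{eq:y'measuredefSHORT} are in hand, everything else is a direct application of Fubini to Lemma \ref{lem:Gformula}.
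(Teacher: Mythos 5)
Your proposal is correct and follows essentially the same route as the paper: the kernel formula is obtained from Lemma \ref{lem:Gformula} by pairing against test sections and applying Fubini, and the vanishing for $\dist(y,y')\ge\tfrac{\eps}{2}$ is inherited from that lemma. The only cosmetic difference is in the verification of absolute integrability, where the paper undoes the substitution \eqref{eq:substitutemap} to reduce to the manifestly convergent integral \eqref{eq:formulaGint}, whereas you argue directly that the support constraint from Lemma \ref{lem:asspupperboundFSHORT} confines $y'$ to a region of volume $O((1-u)^{\dim\Mclosed})$, cancelling the singular prefactor --- both are valid and equivalent.
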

\begin{proof}
One checks that \eqref{eq:kernelformula}
defines an absolutely integrable distribution
by using the substitution \eqref{eq:substitutemap}.
Then \eqref{eq:kernelformula} is the
Schwartz kernel of $\Gcollar_{\zeta}$
by Lemma \ref{lem:Gformula} and Fubini.
It vanishes for $\dist(y,y')\ge\frac{\eps}{2}$ because
the integrand of \eqref{eq:Gformula} vanishes there.\qed
\end{proof}
\begin{lemma}\label{lem:Gcollarpsdo}
The map $\Gcollar_{\zeta}$ satisfies \ref{item:CollarAnalytic} in Lemma \ref{lem:HomotopyCollar}.
\end{lemma}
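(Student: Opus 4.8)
The plan is to verify the defining condition \eqref{eq:bpsdodef} of Definition \ref{def:bpsdo} for the operator $\cutoff\Gcollar_{\zeta}\cutoff$, using the explicit kernel formula \eqref{eq:kernelformula} from Lemma \ref{lem:KernelDef} and the sufficient criterion for classical conormality in Lemma \ref{lem:KernelDiagonal}. Concretely, I would first fix a cutoff $\cutoff\in C^\infty([0,1])$ that vanishes near $t=1$, and observe that the kernel of $\cutoff\Gcollar_{\zeta}\cutoff$ is supported in the region $\{t,t'\le 1-c\}$ for some $c>0$; combined with the vanishing of $K_{\Gcollar_{\zeta}}$ for $\dist(y,y')\ge\tfrac\eps2$ (last sentence of Lemma \ref{lem:KernelDef}), the kernel is supported away from the left face $\blf{}$ and away from the corners of $\bMfd$ coming from $t'=1$. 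Thus I only need to work in the projective coordinates $(t,y,u,y')$ valid away from $\blf{}$, as in Remark \ref{rem:blowup} and Example \ref{ex:Torus}, where $t$ is a boundary defining function for $\bff{}$ and $u=t'/t$ is a boundary defining function for $\brf{}$.

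Next I would rewrite the pulled-back kernel in these coordinates. The factor $\frac1t|dt'|\,\mu_{y,(1-u)a_0}(y')$ together with the half-density normalization becomes, after $dt'=t\,du$, a smooth nonvanishing half-density times $t^{-1/2}$, exactly as in the toy computation $t^{-1}|dtdydt'dy'|^{1/2}=t^{-1/2}|dtdydudy'|^{1/2}$ in Example \ref{ex:Torus}; this accounts for the prefactor $1/\sqrt{\bdf_{\bff{}}}$ in \eqref{eq:bpsdodef}. The homomorphism-valued factor $\Pimap_{t,y}(a_0\oplus a_1,u)$ is smooth in all its arguments by the discussion preceding Lemma \ref{lem:Gformula}; the key point is that its argument $a_1 = \Amap(y,y',(1-u)a_0)/(1-u)$ stays bounded and smooth up to $u=1$, which is where Lemma \ref{lem:FfirstorderSHORT} (giving $\Amap(y,y',(1-u)a_0)=(1-u)(\Lmap_y|_{\Kbundle_y^\bot})^{-1}\exp_y^{-1}(y')+O((1-u)^2\dist^2+\dots)$, so $a_1$ extends smoothly across $u=1$ and equals the analogue of $(y'-y)/(1-u)$) and Lemma \ref{lem:asspupperboundFSHORT} enter. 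After the substitution $a_1\mapsto a_1$, the integral over $a_0\in\Kbundle_y$ is over a compact set with smooth integrand, so the result is again smooth in $(t,y,u,y')$ away from the b-diagonal $\{u=1,\,y=y'\}$, and I would check that the Heaviside $\heaviside(t-t')=\heaviside(1-u)$ and characteristic function $\charact_{\diag_\eps}$ do not destroy smoothness near the b-diagonal (they are locally constant equal to $1$ there).

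Then I would apply Lemma \ref{lem:KernelDiagonal}: I claim that after blowing up the b-diagonal in $\bMfd$, and dividing the pulled-back kernel by $t^{-1/2}$, the remaining piece is a smooth half-density section times $\bdfdiag^{\dim\Mclosed}$ where $\bdfdiag$ is a boundary defining function for the lifted b-diagonal. Indeed in the coordinates $(u-1,y'-y)$ transverse to the b-diagonal, which has codimension $1+\dim\Mclosed$, the kernel has the form $\frac{1}{(1-u)^{\dim\Mclosed}}\,\psi\big(t,y,u,\tfrac{y'-y}{1-u}\big)$ with $\psi$ smooth and compactly supported in the last slot — this is exactly the model treated in Lemma \ref{lem:KernelDiagonal}, giving classical conormality of order $\dim\Mclosed-\Mdim = \dim\Mclosed-(\dim\Mclosed+1)=-1$, consistent with the order $-1$ in \eqref{eq:cutgcur}. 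Finally I would record that the index set is $\order{(0,\emptyset,0)}$: smoothness (order $0$) at $\bff{}$ comes from the factor being a genuine smooth half-density in the projective coordinates near $t=0$; order $\emptyset$ at $\blf{}$ is the support statement above; and order $0$ at $\brf{}$ means merely smoothness up to $u=0$, which holds since for $u$ near $0$ one is away from the b-diagonal and the integrand of \eqref{eq:kernelformula} is manifestly smooth there. The main obstacle I expect is the bookkeeping at the b-diagonal: one must make sure the substitution $a_1=\Amap(y,y',(1-u)a_0)/(1-u)$ is legitimate and smooth up to and across $u=1$ uniformly in $a_0$ — that is precisely the content bundled into Lemmas \ref{lem:DefFNEWSHORT}, \ref{lem:FfirstorderSHORT}, \ref{lem:asspupperboundFSHORT}, and the verification that, after this substitution, the Jacobian and the density $\mu_{y,(1-u)a_0}$ contribute only smooth nonvanishing factors — so the bulk of the work is checking these pieces fit together rather than any new estimate.
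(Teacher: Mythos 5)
Your overall architecture coincides with the paper's: pass to the kernel formula of Lemma \ref{lem:KernelDef}, work in the projective coordinates $(t,y,u,y')$ away from the left face, absorb $\tfrac1t|dt'|\mu_{y,(1-u)a_0}$ into a smooth nonvanishing half-density times $t^{-1/2}$, and conclude conormality of order $-1$ from Lemma \ref{lem:KernelDiagonal} with $m=\dim\Mclosed$. However, there are two incorrect assertions sitting exactly at the delicate point, the b-diagonal $\{u=1,\,y=y'\}$. First, $a_1=\Amap(y,y',(1-u)a_0)/(1-u)$ does \emph{not} stay bounded and extend smoothly across $u=1$: Lemma \ref{lem:FfirstorderSHORT} gives $\Amap(y,y',(1-u)a_0)=(\Lmap_y|_{\Kbundle_y^\bot})^{-1}\exp_y^{-1}(y')+O(\dist(y,y')^2+(1-u)^2\|a_0\|^2)$ \emph{without} the extra factor $(1-u)$ you inserted in front of the leading term, so $a_1$ is the analogue of $(y'-y)/(1-u)$ --- a function homogeneous of degree zero in $(1-u,\,y'-y)$ that is bounded on the cone $\dist(y,y')\lesssim 1-u$ but has no continuous extension to the b-diagonal. (Your two statements ``extends smoothly across $u=1$'' and ``equals the analogue of $(y'-y)/(1-u)$'' contradict each other.) Smoothness is recovered only after blowing up the b-diagonal: in polar coordinates $z=(1-u,y-y')=r\omega$ the quotient becomes roughly $((J_{\vec z}\Amap)(x)\vec\omega+O(r))/\omega^1$, which is smooth where $\omega^1$ is bounded below and escapes the compact support of $\zeta$ in the $a_1$-slot as $\omega^1\to0$; making this precise (Taylor remainder, invertibility of $J_{\vec z}\Amap$) is the core of the paper's argument and cannot be bypassed by asserting smoothness of $a_1$ up to $u=1$.

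Second, the Heaviside factor $\heaviside(1-u)$ is \emph{not} locally constant near the b-diagonal: it jumps exactly on $\{u=1\}$, which contains $\bdiag$. It is harmless only because the support estimate coming from Lemma \ref{lem:asspupperboundFSHORT} together with \eqref{eq:supportzeraassp} confines the kernel to $\dist(y,y')\le 2C_{\Lmap}r(1-u)$, so that after the blowup the support lies in $\{\omega^1\ge c>0\}$, away from the jump; the paper implements this with the homogeneous cutoff $\psi$ that equals $1$ on the support and vanishes near $\omega^1=0$. You cite Lemma \ref{lem:asspupperboundFSHORT} but never extract this support statement, and you also misattribute the vanishing near $\blf{0}$ to the cutoffs and to $\dist(y,y')<\tfrac\eps2$ --- neither controls the regime $t\ll t'$; what kills the kernel near the left face is $\heaviside(t-t')$. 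Your final local model $\frac{1}{(1-u)^{\CdimNEW}}\psi\big(t,y,u,\tfrac{y'-y}{1-u}\big)$ with $\psi$ compactly supported in the last slot is the correct one, but the two steps needed to reduce to it --- the support estimate in $(1-u,\dist(y,y'))$ and the smooth-after-blowup analysis of $\Amap(y,y',(1-u)a_0)/(1-u)$ --- are exactly the ones your sketch glosses over with false intermediate claims, so as written the proof has a genuine gap at its crux.
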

\newcommand{\FF}{f}
\begin{proof}
Abbreviate $E=\VBrel{}{\Mcollar}{\McollarBC_0}$
and $\CdimNEW = \dim\Mclosed$.
The Schwartz kernel of $\cutoff\Gcollar_\zeta\cutoff$ is
\begin{equation*}
K 
= 
(\piL^*\cutoff)(\piR^*\cutoff)K_{\Gcollar_{\zeta}}
\in
\distr\big(\Mcollar^2, \Hom(E,E) \otimes \piR^*\dens^1\big)
\end{equation*}
with $K_{\Gcollar_{\zeta}}$ as in Lemma \ref{lem:KernelDef}.
As explained in \eqref{eq:kerspace2}, this is equivalently a section 
\begin{equation} \label{eq:Ksec}
K \in \distr\big(\Mcollar^2, \KB(E,E) \otimes \dens^{\frac12}\big)
\end{equation}
Fix a nowhere vanishing reference density 
$\gammaref\in C^\infty(\Mcollar,\dens^{\frac12})$.
For $(t,y,t',y')\in \Mcollar^2$ with $(y,y')\in\diag_\eps(\Mclosed)$
and for $a_0\in B_r(\Kbundle_y)$ define
$$
\Gamma(a_0,t,y,t',y')
=
\tfrac{|dt'|\mu_{y,a_0}(y')}{\gammaref^{2}(t',y')}
$$
where $\gammaref^2$ is the second tensor power of 
$\gammaref$, which is a one-density.
Then $\Gamma$ is a smooth nowhere vanishing function
by \eqref{eq:y'measuredefSHORT}.
We factor $K$ in \eqref{eq:Ksec} as 
\begin{equation}\label{eq:Kfactor} 
K = \tilde{K} \otimes \tfrac{1}{t}(\piL^*\gammaref)(\piR^*\gammaref) 
\end{equation}
where
$\tilde{K}\in \distr(\Mcollar^2, \KB(E,E))$. 
By Lemma \ref{lem:KernelDef} this is given,
for $0<t\neq t'$, by
\begin{align}\label{eq:Ktildeformula}
\begin{aligned}
\tilde{K}(t,y,t',y')
&=
\int_{\Kbundle_y}\cutoff(t)\cutoff(t')\heaviside(t-t') 
\charact_{\diag_{\eps}(\Mclosed)}(y,y')
\frac{1}{(1-u)^{\CdimNEW}}\\
&\qquad
\zeta(a_0\oplus a_1) 
\Gamma((1-u)a_0,t,y,t',y')
\tilde{\Pimap}_{t,y}(a_0\oplus a_1,u)\, da_0
\end{aligned}
\end{align}
with $u=\frac{t'}{t}$ and $a_1=\frac{\Amap(y,y',(1-u)a_0)}{1-u}$,
and $\tilde{\Pimap}_{t,y}(a_0\oplus a_1,u)$ given by
pre- and post- composing $\Pimap_{t,y}(a_0\oplus a_1,u)$
with multiplication by $\gammaref(t',y')$
respectively $\frac{1}{\gammaref(t,y)}$.

\begin{figure}
\centering
\begin{tikzpicture}[scale=0.9]
\def\s{2.7};
\draw[white,fill=gray!40] (0,0) -- (\s-0.4,0) -- (\s-0.4,\s-0.4);
\draw[white] (1,0) arc (0:90:1) node[midway,anchor=north east] {\footnotesize $\bff{0}$};
\filldraw[white] (0,0) circle (1);
\draw[very thick,-{Latex[length=1.5mm,width=1.5mm]}] (1,0) -- (\s+0.3,0) node[midway,anchor=north] {\footnotesize $\brf{0}=\{u=0\}$}
node[anchor=west,xshift=-1mm] {\footnotesize $t$};
\draw[very thick,-{Latex[length=1.5mm,width=1.5mm]}] (0,1) -- (0,\s+0.3) node[midway,anchor=east] {\footnotesize $\blf{0}$}
node[anchor=south,yshift=-1mm] {\footnotesize $t'$};
\draw[very thick,dashed] (\s,0) -- (\s,\s);
\draw[very thick,dashed] (0,\s) -- (\s,\s);
\draw[thick] (0.7,0.7) -- (\s-0.1,\s-0.1) node[midway,anchor=south,xshift=-4mm,yshift=-1mm] {\footnotesize $u=1$};
\draw[-,very thick] (1,0) arc (0:90:1) node[midway,anchor=north east] {\footnotesize $\bff{0}=\{t=0\}$};
\end{tikzpicture}
\captionsetup{width=115mm}%
\caption{The figure illustrates the subspace
$\McollarbOpen\simeq[[0,1)^2,(0,0)]\times\Mclosed^2$ of $\Mcollarb$,
where the factor $\Mclosed^2$ is suppressed.
The support of $\blowdown^*K$ is contained in the gray region.}
\label{fig:KernelPicture}
\end{figure}

Define the b-double space $\blowdown:\Mcollarb\to\Mcollar^2$ 
as in \eqref{eq:defbMfd},
and for $i=0,1$ define $\bff{i},\blf{i},\brf{i}$
as in \eqref{eq:facesdef} with $\Mfd=\Mcollar$ and
$\MfdBC_i=\{i\}\times\Mclosed$.
The kernel $K$ vanishes in a neighborhood of $t=1$ and of $t'=1$,
hence its pullback to $\Mcollarb$ has compact support in 
the subset $\McollarbOpen\subset\Mcollarb$ given by 
$$\McollarbOpen
=
[([0,1)\times\Mclosed)^2,\McollarBC_0^2]
\simeq
[[0,1)^2,(0,0)]\times\Mclosed^2
$$
See Figure \ref{fig:KernelPicture}.
In particular it vanishes around $\bff{1},\blf{1},\brf{1}$.
Furthermore the kernel $K$ vanishes for $t<t'$, hence
its pullback to $\Mcollarb$ vanishes around $\blf{0}$.
This explains in particular 
the empty index at the left face in \eqref{eq:cutgcur}.
On $\McollarbOpen\setminus\blf{0}$ the projective coordinates
$(t,y,u,y')$ 
with \smash{$u=\frac{t'}{t}$} are regular coordinates.
In these coordinates:
\begin{itemize}
\item 
$t$ and $u$ are boundary defining functions for 
$\bff{0}$ respectively $\brf{0}$.
\item 
The b-diagonal is given by $u=1$, $y=y'$.
\item 
The blowdown map is given by
$\blowdown(t,y,u,y')=(t,y,tu,y')$.
\end{itemize}

Therefore for \eqref{eq:cutgcur} it remains to show that 
\begin{equation}\label{eq:Kgoal}
\sqrt{t}\blowdown^*K 
\in 
\bconormal^{-1}\big(\Mcollarb,\bdiag;\blowdown^*(\KB(E,E))\otimes \dens^{\frac12}\big)
\end{equation}
Using \eqref{eq:Kfactor} we have 
$$\sqrt{t}\blowdown^*K 
=
\blowdown^*\tilde{K} \otimes \tfrac{1}{\sqrt{t}}\blowdown^*((\piL^*\gammaref)(\piR^*\gammaref))$$
where the second factor 
is a smooth nowhere vanishing half-density on $\McollarbOpen\setminus\blf{0}$.
To see this use $\blowdown^*|dtdt'| = t|dtdu|$.
Hence for \eqref{eq:Kgoal} it remains to show that
\begin{equation}\label{eq:Ktildeconormal}
\blowdown^*\tilde{K} \in 
\bconormal^{-1}\big(\Mcollarb,\bdiag;
\blowdown^*\KB(E,E)\big)
\end{equation}
Using \eqref{eq:Ktildeformula}, for
$(t,y,u,y')\in\McollarbOpen\setminus\blf{0}$ with 
$u\neq1$ we have 
\begin{align}\label{eq:formulabetak}
\begin{aligned}
(\blowdown^*\tilde{K})(t,y,u,y')
&=
\int_{\Kbundle_y}\cutoff(t)\cutoff(tu) \heaviside(1-u) 
\charact_{\diag_{\eps}(\Mclosed)}(y,y')
\frac{1}{(1-u)^{\CdimNEW}} \\
&\quad\zeta(a_0\oplus a_1)
\Gamma((1-u)a_0,t,y,tu,y')
\tilde{\Pimap}_{t,y}(a_0\oplus a_1,u)\,da_0
\end{aligned}
\end{align}
where \smash{$a_1=\frac{\Amap(y,y',(1-u)a_0)}{1-u}$}.
Then by Lemma \ref{lem:asspupperboundFSHORT} and \eqref{eq:supportzeraassp} 
and by Lemma \ref{lem:KernelDef},
\begin{align}\label{eq:supp}
\begin{aligned}
\supp(\blowdown^*\tilde{K})
\subset
\big\{
(t,y,u,y')\in \McollarbOpen\setminus\blf{0} \mid 
\ &0\le u\le 1\\
  &\dist(y,y')\le \tfrac{\eps}{2}\\
  &\dist(y,y')\le 2C_{\Lmap}r(1-u)
\big\}
\end{aligned}
\end{align}
where $C_{\Lmap}>0$ is a constant defined in Lemma \ref{lem:asspupperboundFSHORT}.
We claim that 
\begin{equation}\label{eq:K'Udiagsmooth}
\text{$\blowdown^*\tilde{K}$ is smooth on 
$\Mcollarb\setminus\bdiag$}
\end{equation}
To check this let $p=(t,y,u,y')\in\supp(\blowdown^*\tilde{K})\setminus\bdiag$.
Then \eqref{eq:supp} and $p\notin\bdiag$ imply $u<1$, 
hence $\blowdown^*\tilde{K}$ is smooth around $p$ because 
$\cutoff$, $\zeta$, $\Gamma$, $\tilde\Pimap$, $\Amap$ are smooth,
using Lemma \ref{lem:DefFNEWSHORT}.

We now analyze $\blowdown^*\tilde{K}$ near the b-diagonal,
where we use the blowup 
$\blowdowndiag:[\Mcollarb,\bdiag]\to\Mcollarb$ in \eqref{eq:bMfddiagBU}.
Let 
$\bdfdiag$ be a boundary defining function for 
$\blowdowndiag^{-1}(\bdiag)$.
Then by Lemma \ref{lem:KernelDiagonal}
with $m=\CdimNEW$, for \eqref{eq:Ktildeconormal} it suffices to show that
\begin{equation*}
\bdfdiag^{\CdimNEW}(\blowdowndiag^*\blowdown^*\tilde{K})
\in 
C^\infty\big([\Mcollarb,\bdiag],\blowdowndiag^*\blowdown^*\KB(E,E)\big)
\end{equation*}
By \eqref{eq:K'Udiagsmooth} this section is smooth 
away from $\blowdowndiag^{-1}(\bdiag)$.
We now show that for all $p\in\bdiag$ one has 
\begin{align}\label{eq:smoothatp}
\text{$\bdfdiag^{\CdimNEW}(\blowdowndiag^*\blowdown^*\tilde{K})$
is smooth around $\blowdowndiag^{-1}(p)$}
\end{align}
By \eqref{eq:supp} we can assume that $p\in\McollarbOpen\setminus\blf{0}$.
We denote $\blowdown(p) = (t_0,y_0,t_0,y_0)$.

To show this we fix local coordinates around $y_0\in\Mclosed$
such that $y_0$ corresponds to the coordinate origin.
By abuse of notation we will not distinguish between 
points and their coordinates, so $y_0=0$.
The coordinates around 
$y_0\in\Mclosed$ induce coordinates around $(y_0,y_0)\in\Mclosed^2$.
We also fix local trivializations of the bundles $\Kbundle$ and
$\Kbundle^\bot$ around $y_0$ that respect the inner product on the fibers,
and a local trivialization of $\blowdown^*\KB(E,E)$ around $p$.

Relative to these trivializations, $\blowdown^*\tilde{K}$ 
is a matrix whose entries are distributions around $p$.
Let $\kappa$ be one such entry. 
Then, for $Z\subset\McollarbOpen\setminus\blf{0}$ a sufficiently small open neighborhood 
of $p$, there exists a smooth function 
\[ 
\FF:\; \R^{\nVF-\CdimNEW}\times\R^{\CdimNEW}\times Z \to \R
\]
whose support in the first and second arguments
is contained in the ball of radius $r/2$ centered at the origin,
and such that on $Z$ one has 
\[ 
\kappa(t,y,u,y')
= 
\heaviside(1-u)\frac{1}{(1-u)^{\CdimNEW}}\int_{\R^{\nVF-\CdimNEW}}
\FF(a_0,\frac{\Amap(y,y',(1-u)a_0)}{1-u},t,y,u,y')\,da_0
\]
The function $\FF$ is defined by the formula \eqref{eq:formulabetak},
using the trivializations. It is smooth
because $\cutoff$, $\zeta$, $\Gamma$, $\tilde\Pimap$ are smooth,
and it has the given support by \eqref{eq:supportzeraassp}.

Fix an auxiliary smooth function $\psi:\R^{2}\setminus0\to [0,1]$
that satisfies $\psi(\lambda w)=\psi(w)$ for all $\lambda>0$ and $w\in\R^{2}\setminus0$, 
that is equal to $1$ when $|w_1|\le w_2$, and that vanishes
identically when $|w_1|\ge 2 w_2$.
By \eqref{eq:supp} there exists $c>0$ such that
$$
\text{$\psi(\|y-y'\|,c(1-u))=1$ on the support of $\kappa$}
$$ 
where, as explained above, we identify points with their coordinates.
Then for \eqref{eq:smoothatp} it suffices to show that each of%
\begin{subequations}\label{eq:smoothblowup123}
\begin{align}
&\blowdowndiag^*\big(\psi(\|y-y'\|,c(1-u)) \heaviside(1-u)\big)\label{eq:smoothblowup1} \\
\bdfdiag&\blowdowndiag^*\big(\psi(\|y-y'\|,c(1-u)) \tfrac{1}{1-u}\big)
\label{eq:smoothblowup2}\\
&\blowdowndiag^* 
\FF(a_0,\tfrac{\Amap(y,y',(1-u)a_0)}{1-u},t,y,u,y')
\label{eq:smoothblowup3}
\end{align}
\end{subequations}
is smooth around $\blowdowndiag^{-1}(p)$,
where in \eqref{eq:smoothblowup3} we require joint smoothness
also in $a_0\in \R^{\nVF-\CdimNEW}$.
To check this we introduce coordinates $(x,z)$ around $p$
that are adapted to the b-diagonal. Concretely we set 
\begin{align*}
x = (t-t_0,y+y')
&&
z = (1-u,y-y')
\end{align*}
They range over a neighborhood of the origin in 
$H\times\R^{\Mdim}$ where $\Mdim=\CdimNEW+1$ and 
\begin{itemize}
\item 
$H = \R^{\Mdim}$ when $p\notin\bff{0}$.
\item 
$H = \{x\in\R^{\Mdim}\mid x^1\ge0\}$ when $p\in\bff{0}$.
\end{itemize}
The point $p$ corresponds to $x=z=0$,
and $\bdiag$ is locally given by $z=0$.

Relative to these coordinates,
an open neighborhood of $\blowdowndiag^{-1}(p)$ in 
$[\Mcollarb,\bdiag]$ is given by an open neighborhood of 
$0\times (0\times S^{\Mdim-1})$ in 
\begin{equation}\label{eq:Hblowup}
H \times [\R^{\Mdim},0]
\end{equation}
Here $[\R^{\Mdim},0]=[0,\infty)\times S^{\Mdim-1}$
using polar coordinates $z=r\omega$ with $r\in[0,\infty)$
and $\omega\in S^{\Mdim-1}$.
We set $\bdfdiag=r$ locally.
We can now show that \eqref{eq:smoothblowup123} are smooth,
when viewed as functions on \eqref{eq:Hblowup},
defined locally around $x=0$, $r=0$:
\begin{itemize}
\item 
For $r>0$ the functions \eqref{eq:smoothblowup1} and \eqref{eq:smoothblowup2} are
equal to, respectively,
$$\psi(\|\vec\omega\|,c\omega^1)\heaviside(\omega^1)
\qquad\qquad
\psi(\|\vec\omega\|,c\omega^1)\tfrac{1}{\omega^1}
$$
where $\vec{\omega}=(\omega^2,\dots,\omega^{\Mdim})$.
The function $\psi(\|\vec\omega\|,c\omega^1)$ vanishes around $\omega^1=0$,
hence these are smooth functions of $(r,\omega)\in [0,\infty)\times S^{\Mdim-1}$.
\item 
Consider \eqref{eq:smoothblowup3}.
By the support of $\FF$ we can assume $\|a_0\|\le r/2$.
Let $(J_{z}\Amap)(x)\in \R^{\CdimNEW\times\Mdim}$ 
be the Jacobian matrix of $\Amap(y,y',(1-u)a_0)$ with respect to 
$z$ and evaluated at $(x,0)\in H\times\R^{\Mdim}$.
By Lemma \ref{lem:FfirstorderSHORT} this is independent of $a_0$,
and for each $x$ 
we have $(J_{z^1}\Amap)(x)=0$ and the
square matrix $(J_{\vec{z}}\Amap)(x)$ is invertible,
where $\vec{z}=(z^2,\dots,z^{\Mdim})$.
By Taylor's theorem with remainder
and using polar coordinates
$z=r\omega$, for $r>0$ we obtain 
\[ 
\frac{\Amap(y,y',(1-u)a_0)}{1-u}
=
\frac{
(J_{\vec{z}}\Amap)(x)\vec{\omega} + r \sum_{i,j=1}^{\Mdim}\omega^i\omega^jR_{ij}(x,r\omega,a_0)}{\omega^1}
\]
where $R_{ij}$ are smooth functions with values in $\R^{\CdimNEW}$,
defined around $(x,z)=(0,0)$ and for $\|a_0\|\le r/2$.
Since $(J_{\vec{z}}\Amap)(x)$ is invertible,
the numerator on the right hand side is nonzero around $r=0$, $\omega^1=0$.
Since $\FF$ has compact support in the second argument,
this implies that \eqref{eq:smoothblowup3} vanishes 
around $r=0$, $\omega^1=0$, thus it is smooth there.
Away from $\omega^1=0$ the denominator is nonzero, 
hence \eqref{eq:smoothblowup3} is smooth because $\FF$ is smooth.
Hence \eqref{eq:smoothblowup3} is 
a smooth function of $(x,r,\omega,a_0)$ around $x=0$, $r=0$,
as claimed.
\end{itemize}
This concludes the proof of \eqref{eq:smoothatp}, 
and hence the proof of the lemma.\qed
\end{proof}
\begin{proof}[of Lemma \ref{lem:HomotopyCollar}]
Set $\Gcollar = \Gcollar_{\zeta}$ and use 
Lemma \ref{lem:GaGzetaSatisfiesP1P2} and 
Lemma \ref{lem:Gcollarpsdo}.\qed
\end{proof}
\section{Proof of Theorem \ref{thm:main}}\label{sec:prfthm1}
\newcommand{\Omegacc}{\tc{junglegreen}{\Omega}_{\tc{junglegreen}{\text{c}}}}
\newcommand{\GHE}{\tc{lcolor}{G}_{\tc{lcolor}{\p}}}
\newcommand{\GHEcc}{\tc{lcolor}{G}_{\tc{lcolor}{\text{c}}}}
\newcommand{\pHE}{\tc{lcolor}{P}_{\tc{lcolor}{\p}}}
\newcommand{\iHE}{\tc{lcolor}{I}_{\tc{lcolor}{\p}}}
The strategy to prove Theorem \ref{thm:main} is as follows.
We will first construct 
a homotopy equivalence and a contraction
(see Section \ref{sec:HomotopyEquivalence}):
\begin{subequations}\label{eq:HEsSummary}
\begin{equation}\label{eq:HEsSummary1}
\begin{tikzpicture}[baseline=(current  bounding  box.center)]
  \matrix (m) [matrix of math nodes, column sep = 23mm, minimum width = 4mm ]
  {
    \Omega(\Mfd,\p\Mfd) & \Omegacc(\Mfd) \\
  };
  \path[-stealth]
    (m-1-1) edge [transform canvas={yshift=1.5mm}] node [above] {\footnotesize $\pHE$} (m-1-2)
            edge [out=180+13,in=180-13,min distance=5mm] node
                 [left,xshift=-1mm] {
                 \footnotesize $\GHE$  homotopy 
                 } (m-1-1)
    (m-1-2) edge [left hook-stealth,transform canvas={yshift=-1mm}] node [below] {\footnotesize $\iHE$} (m-1-1)
            edge [out=-17,in=+17,min distance=5mm] node
                 [right,xshift=1.5mm
                 ] {
                 \footnotesize $\GHEcc$  homotopy
                 } (m-1-2);
\end{tikzpicture}
\end{equation}
\begin{equation}\label{eq:HEsSummary2}
\begin{tikzpicture}[baseline=(current  bounding  box.center)]
  \matrix (m) [matrix of math nodes, column sep = 23mm, minimum width = 4mm ]
  {
    \Omega(\Mfd,\p\Mfd) & \Harm_g(\Mfd) \\
  };
  \path[-stealth]
    (m-1-1) edge [transform canvas={yshift=1.5mm}] node [above] {\footnotesize $\phodge$} (m-1-2)
            edge [out=180+13,in=180-13,min distance=5mm] node
                 [left,xshift=-1mm] {
                 \footnotesize $\Ghodge$  homotopy 
                 } (m-1-1)
    (m-1-2) edge [left hook-stealth,transform canvas={yshift=-1.5mm}] node [below] {\footnotesize $\ihodge$} (m-1-1)
            edge [out=-17,in=+17,min distance=5mm] node
                 [right,xshift=1mm
                 ] {
                 \footnotesize $0$  homotopy
                 } (m-1-2);
\end{tikzpicture}
\end{equation}
\end{subequations}
Here $\Omegacc(\Mfd)$ are all
forms that vanish in an $\frac{\eps_0}{2}$-neighborhood of the boundary;
and $\Harm_g(\Mfd)\simeq H(\Mfd,\p\Mfd)$ are the 
relative harmonic forms for a smooth Riemannian metric $g$.
The homotopy equivalence \eqref{eq:HEsSummary1} is obtained 
using Lemma \ref{lem:HomotopyCollar} on collar neighborhoods,
with a homotopy $\GHE$ that has good analytic properties,
it preserves support and is b-pseudodifferential.
The contraction \eqref{eq:HEsSummary2} is obtained using the
Hodge decomposition,
with a homotopy $\Ghodge$ that does not have good analytic properties.
Homotopy equivalences can be composed, see Lemma \ref{lem:compHE}.
We will compose \eqref{eq:HEsSummary} as follows,
using \eqref{eq:HEsSummary1} twice:
\begin{equation}\label{eq:HEcompNEWNEW}
\Omega(\Mfd,\p\Mfd) 
\ \xleftrightarrow{\eqref{eq:HEsSummary1}\,}\ \Omegacc(\Mfd)
\ \xleftrightarrow{\eqref{eq:HEsSummary1}\,}\ \Omega(\Mfd,\p\Mfd)
\ \xleftrightarrow{\eqref{eq:HEsSummary2}\,}\ \Harm_g(\Mfd)
\end{equation}
This yields a new contraction from
$\Omega(\Mfd,\p\Mfd)$ to $\Harm_g(\Mfd)$.
The resulting homotopy on $\Omega(\Mfd,\p\Mfd)$
will be the map $\Gfull$ in Theorem \ref{thm:main},
explicitly,
\begin{equation}
\label{eq:gformula} 
\Gfull = \GHE + \iHE\GHEcc\pHE + \iHE\pHE \Ghodge \iHE\pHE
\end{equation}
Here $\Ghodge$ only appears
sandwiched between $\iHE\pHE$,
which maps to forms that vanish in an 
$\frac{\eps_0}{2}$-neighborhood of the boundary.
This will imply that $\Gfull$ 
inherits the good analytic properties of $\GHE$,
suitable for Theorem \ref{thm:main}.
The algebraic properties will follow 
from the defining properties of a contraction.

The homotopy $\Gfull$ can be used to solve
the inhomogeneous equation $\ddR\omega = \eta$,
where $\eta$ is a given exact relative form,
namely $\ddR(\Gfull\eta)=\eta$.
The three terms in \eqref{eq:gformula} can be viewed
as solving this equation in three steps:
\begin{align*}
\ddR (\GHE \eta) &=\eta -\iHE\pHE\eta\\
\ddR (\iHE\GHEcc\pHE \eta) &=\phantom{\eta-\ }\iHE\pHE\eta -(\iHE\pHE)^2\eta\\
\ddR (\iHE\pHE \Ghodge \iHE\pHE\eta)&=
\phantom{\eta -\iHE\pHE\eta-\;}
(\iHE\pHE)^2\eta
\end{align*}
In the first step the equation is solved near the boundary,
the error $\iHE\pHE\eta$ vanishes near the boundary. 
In the second step the error is modified to $(\iHE\pHE)^2\eta$,
which is then conveniently solved using $\Ghodge$ in the third step.

\final{%
We note that the specific composition \eqref{eq:HEcompNEWNEW}
is chosen partly for convenience to simplify the analysis.
We have not pursued alternative constructions composing fewer than three homotopy equivalences, since we saw no particular benefit.}

\subsection{%
Homotopy equivalence between\\
relative and compactly supported forms}\label{sec:HE1}
We construct the homotopy equivalence \eqref{eq:HEsSummary1}.

\begin{lemma}\label{lem:contractiontocompact}
Let $\Mfd$, $\bdf_i$ with $i=1\dots\nBC$, 
and $\eps_0$ be as in Theorem \ref{thm:main}.
Define
\[ 
\Omegacc(\Mfd) = 
\big\{ \omega\in\Omega(\Mfd)\mid 
\forall i=1\dots\nBC:\;
\omega|_{\bdf_{i}\le\frac{\eps_0}{2}}=0 \big\}
\]
which is a subcomplex of the de Rham complex.
Let $\iHE:\Omegacc(\Mfd)\hookrightarrow \Omega(\Mfd,\p\Mfd)$ be 
the inclusion.
Then there exist $\R$-linear maps 
\begin{align*}
\GHE :\; \Omega^k(\Mfd,\p\Mfd)&\to \Omega^{k-1}(\Mfd,\p\Mfd)\\
\pHE :\;\Omega^k(\Mfd,\p\Mfd)&\to \Omegacc^k(\Mfd)
%
\end{align*}
defined for every integer $k$,
with the following properties:
\begin{enumerate}[label=(a\arabic*)]
\item \label{item:HEHomotopy}
Algebraic control:
The restriction of $\GHE$ to $\Omegacc(\Mfd)$ 
maps to $\Omegacc(\Mfd)$.
We denote this restriction by $\GHEcc$.
The four-tuple $(\pHE,\iHE,\GHE,\GHEcc)$ is a homotopy equivalence
(see \eqref{eq:HE})
between $\Omega(\Mfd,\p\Mfd)$ and $\Omegacc(\Mfd)$.
\item \label{item:HESupport}
Support control: 
For all $0<\eps\le\smash{\eps_0}$,
all $i=1\dots\nBC$,
and all $\omega\in \Omega(\Mfd,\p\Mfd)$ one has 
$\omega|_{\bdf_i\le\eps}=0\, \Rightarrow\, (\GHE\omega)|_{\bdf_i\le\eps}=0$.
\item \label{item:HEAnalytic}
Analytic control: 
\begin{align*}
\GHE &\in \bPsdo^{-1,\order{(0,\emptyset,0)}}(\Mfd;\VBrel{}{\Mfd}{\p\Mfd})\\
\iHE\pHE &\in \Psdo^{0,\order{(\emptyset,0)}}(\Mfd;\VBrel{}{\Mfd}{\p\Mfd})
\end{align*}
\end{enumerate}
\end{lemma}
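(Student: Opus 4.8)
The plan is to transplant the collar homotopy of Lemma~\ref{lem:HomotopyCollar} to a neighbourhood of each boundary component and glue the pieces together with cutoffs; the outer cutoff confines every piece to its collar, so that extension by zero is legitimate. For each $i$, use the rescaled defining function $\bdf_i/\eps_0$ to identify the collar $\bdf_i^{-1}([0,\eps_0])$ with $[0,1]\times\MfdBC_i$, and let $\Gcollar_i$ be the operator furnished by Lemma~\ref{lem:HomotopyCollar} on this collar, so that $\one=\ddR\Gcollar_i+\Gcollar_i\ddR$ there and $\Gcollar_i$ has the support property \ref{item:CollarSupport} and the b-pseudodifferential property \ref{item:CollarAnalytic}. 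Fix $\chi_i\in C^\infty(\Mfd)$ that equals $1$ on $\{\bdf_i\le\eps_0/2\}$, is supported in $\{\bdf_i\le\tfrac{3}{4}\eps_0\}$, and depends only on $\bdf_i$ on its support (so $\ddR\chi_i$ is supported in a compact subset of the open $i$-th collar, away from $\MfdBC_i$ and from the inner edge). Then set
\[
\GHE\omega=\sum_{i=1}^{\nBC}\chi_i\,\Gcollar_i(\chi_i\omega),\qquad
\pHE\omega=\Bigl(1-\sum_i\chi_i^2\Bigr)\omega-\sum_i\Bigl(\ddR\chi_i\wedge\Gcollar_i(\chi_i\omega)-\chi_i\,\Gcollar_i(\ddR\chi_i\wedge\omega)\Bigr),
\]
and let $\iHE$ be the inclusion. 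Each summand above is a relative form confined to the open $i$-th collar, so $\GHE$ and $\pHE$ are well-defined $\R$-linear maps of the required degrees.

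For the algebraic and support claims I would argue as follows. Expanding $\ddR\GHE\omega+\GHE\ddR\omega$ by the Leibniz rule and inserting $\ddR\Gcollar_i(\chi_i\omega)=\chi_i\omega-\Gcollar_i(\ddR\chi_i\wedge\omega)-\Gcollar_i(\chi_i\ddR\omega)$ (from $\one=\ddR\Gcollar_i+\Gcollar_i\ddR$), the $\chi_i\Gcollar_i(\chi_i\ddR\omega)$ terms cancel and one is left with
\[
\ddR\GHE\omega+\GHE\ddR\omega=\Bigl(\sum_i\chi_i^2\Bigr)\omega+\sum_i\Bigl(\ddR\chi_i\wedge\Gcollar_i(\chi_i\omega)-\chi_i\,\Gcollar_i(\ddR\chi_i\wedge\omega)\Bigr)=\omega-\pHE\omega,
\]
so $\one-\iHE\pHE=\ddR\GHE+\GHE\ddR$ holds by construction. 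On $\{\bdf_j\le\eps_0/2\}$ one has $\sum_i\chi_i^2=\chi_j^2=1$, every $\ddR\chi_i\equiv0$, the $i\ne j$ summands vanish by disjointness of the collars, and $\chi_j\Gcollar_j(\ddR\chi_j\wedge\omega)$ vanishes there by \ref{item:CollarSupport} (with $\eps=\eps_0/2$) since $\ddR\chi_j\wedge\omega$ does; hence $\pHE$ lands in $\Omegacc(\Mfd)$. The homotopy identity forces $\ddR\iHE\pHE=\iHE\pHE\ddR$, and since $\iHE$ is an injective chain map, $\pHE$ is a chain map. If $\omega\in\Omegacc(\Mfd)$, then $\chi_i\omega$, hence $\Gcollar_i(\chi_i\omega)$ by \ref{item:CollarSupport}, vanishes on $\{\bdf_i\le\eps_0/2\}$, so $\GHE$ restricts to a map $\GHEcc$ of $\Omegacc(\Mfd)$ into itself, and the identity $\one-\pHE\iHE=\ddR\GHEcc+\GHEcc\ddR$ on $\Omegacc(\Mfd)$ then follows from the first one because $\iHE$ is the inclusion; this gives \ref{item:HEHomotopy}. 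For \ref{item:HESupport}: if $\omega|_{\bdf_i\le\eps}=0$ with $0<\eps\le\eps_0$, the $j\ne i$ summands of $\GHE\omega$ vanish on $\{\bdf_i\le\eps\}$ by disjointness of the collars, and the $i$-th one vanishes there by \ref{item:CollarSupport} applied to $\Gcollar_i$ at the rescaled threshold $\eps/\eps_0\le1$.

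For the analytic control \ref{item:HEAnalytic}: since $\chi_i$ vanishes near the inner edge of the $i$-th collar, Lemma~\ref{lem:HomotopyCollar}\ref{item:CollarAnalytic} gives $\chi_i\Gcollar_i\chi_i\in\bPsdo^{-1,\order{(0,\emptyset,0)}}$ on that collar; its Schwartz kernel has compact support inside the open $i$-th collar in both variables, where the b-double space of $\Mfd$ coincides with that of the collar near the front face over $\MfdBC_i$, so extension by zero places $\chi_i\Gcollar_i\chi_i$ in $\bPsdo^{-1,\order{(0,\emptyset,0)}}(\Mfd;\VBrel{}{\Mfd}{\p\Mfd})$; summing over $i$, $\GHE\in\bPsdo^{-1,\order{(0,\emptyset,0)}}(\Mfd;\VBrel{}{\Mfd}{\p\Mfd})$. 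By Lemma~\ref{lem:ddRNEW}, $\ddR\in\bPsdo^{1,\order{(0,\emptyset,\emptyset)}}$, so Lemma~\ref{lem:composition} together with Remark~\ref{rem:normsVB} yields $\ddR\GHE,\GHE\ddR\in\bPsdo^{0,\order{(0,\emptyset,0)}}(\Mfd;\VBrel{}{\Mfd}{\p\Mfd})$, and with $\one\in\bPsdo^{0,\order{(0,\emptyset,\emptyset)}}$ this gives $\iHE\pHE=\one-(\ddR\GHE+\GHE\ddR)\in\bPsdo^{0,\order{(0,\emptyset,0)}}(\Mfd;\VBrel{}{\Mfd}{\p\Mfd})$. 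Finally, the range of $\pHE$ lies in $\Omegacc(\Mfd)$, so the Schwartz kernel of $\iHE\pHE$ vanishes whenever its left variable satisfies $\bdf_j\le\eps_0/2$, hence its pullback to $\bMfd$ is supported away from $\blf{}\cup\bff{}$; combined with the classical conormality along $\bdiag$ already established, this improves the left-face and front-face indices to $\emptyset$, giving $\iHE\pHE\in\bPsdo^{0,\order{(\emptyset,\emptyset,0)}}(\Mfd;\VBrel{}{\Mfd}{\p\Mfd})=\Psdo^{0,\order{(\emptyset,0)}}(\Mfd;\VBrel{}{\Mfd}{\p\Mfd})$.

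Essentially all the analytic difficulty is already contained in Lemma~\ref{lem:HomotopyCollar}. The one step here requiring care is the transplantation — that a b-pseudodifferential operator on a collar whose Schwartz kernel is compactly supported inside the collar is again a b-pseudodifferential operator of the same order and index set when regarded on $\Mfd$ — which is a routine locality property of the b-calculus; the remaining index-set bookkeeping in $\ddR\GHE+\GHE\ddR$ and in the upgrade of $\iHE\pHE$ is mechanical.
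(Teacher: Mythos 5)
Your construction coincides with the paper's: $\GHE=\sum_i\chi_i\Gcollar_i\chi_i$ with $\pHE=\one-\ddR\GHE-\GHE\ddR$ (your explicit formula for $\pHE$ is exactly the Leibniz expansion of this), and the verification of the algebraic identities, the support propagation, and the analytic control — including the composition bookkeeping for $\ddR\GHE+\GHE\ddR$ and the upgrade of the front-face and left-face indices of $\iHE\pHE$ from the support of its Schwartz kernel — proceeds just as in the paper. The only point the paper addresses that you omit is the degenerate case $\dim\Mfd=1$, where each $\MfdBC_i$ is a point and Lemma~\ref{lem:HomotopyCollar} must be checked separately for $\Mcollar=[0,1]$; this is a trivial addendum and does not affect the argument.
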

\begin{proof}
We first prove the lemma for $\dim\Mfd\ge2$ and
in the special case $\nBC=1$, denoting $\bdf_1=\bdf$. 
Then $\p\Mfd$ is a connected closed manifold of dimension $\ge1$.
Without loss of generality we can assume that $\eps_0=1$.
Then, by assumption, $\bdf^{-1}([0,1])$ is a collar neighborhood of $\p\Mfd$.
Fix an identification
\begin{equation}\label{eq:ident}
\bdf^{-1}([0,1]) \simeq [0,1]_{\bdf}\times\p\Mfd
\end{equation}
Fix a smooth cutoff $\cutoff:\Mfd\to[0,1]$
that only depends on the value of $\bdf$, 
that is equal to $1$ for $\bdf\le\frac12$,
and vanishes for $\bdf\ge\frac34$.
Fix a map 
$\Gcollar:
\Omega^k([0,1]\times\p\Mfd,\{0\}\times\p\Mfd)
\to 
\Omega^{k-1}([0,1]\times\p\Mfd,\{0\}\times\p\Mfd)$
as in Lemma \ref{lem:HomotopyCollar} applied to $\Mclosed=\p\Mfd$.
Define 
\begin{align}\label{eq:GHEdef}
\begin{aligned}
\GHE &= \cutoff \Gcollar \cutoff\\
\pHE &= \one - \ddR \GHE - \GHE \ddR 
\end{aligned}
\end{align}
Note that $\GHE$ maps to smooth forms
since $\cutoff=0$ for $\bdf\ge\frac34$.
We check that $\pHE$ indeed maps relative to compactly supported forms.
Let $\omega\in \Omega(\Mfd,\p\Mfd)$. Then 
\begin{align}
\ddR\GHE(\omega) &= (\ddR\cutoff)\wedge\Gcollar (\cutoff \omega)+\cutoff \ddR \Gcollar (\cutoff \omega)\label{eq:dcut1}\\
\GHE(\ddR\omega) &= 
\cutoff \Gcollar (\ddR (\cutoff \omega)) - \cutoff \Gcollar ((\ddR \cutoff)\wedge\omega)
\label{eq:dcut2}
\end{align}
For $\bdf\le\frac12$ we have $\cutoff=1$,
hence the first term in \eqref{eq:dcut1} vanishes there.
Furthermore the second term in \eqref{eq:dcut2} vanishes there,
using \ref{item:CollarSupport} in Lemma \ref{lem:HomotopyCollar}.
Hence
\[ 
\pHE(\omega)|_{\bdf\le\frac12}
= 
\omega - \cutoff (\ddR \Gcollar+\Gcollar \ddR) (\cutoff \omega)
=
(1-\cutoff^2)\omega 
=
0
\]
where for the second equality we use \ref{item:CollarHomotopy} 
in Lemma \ref{lem:HomotopyCollar}.

We check that \eqref{eq:GHEdef} have the stated
properties.
\ref{item:HESupport}: By \ref{item:CollarSupport} in Lemma \ref{lem:HomotopyCollar}.
\ref{item:HEHomotopy}: 
By \ref{item:HESupport} with $\eps=\frac12$
the restriction of $\GHE$ to $\Omegacc(\Mfd)$ 
maps to $\Omegacc(\Mfd)$.
We check the properties \eqref{eq:HEcond}.
Clearly $\iHE$ is a chain map;
and $\pHE$ is a chain map by $\pHE\ddR = \ddR - \ddR \GHE\ddR = \ddR\pHE$
using $\ddR^2=0$;
the remaining identities
are immediate from the definition of $\pHE$ and $\iHE$.
\ref{item:HEAnalytic}:
For $\GHE$ this follows from \ref{item:CollarAnalytic} in Lemma \ref{lem:HomotopyCollar}
and the fact that $\cutoff$ vanishes for $\bdf\ge\frac34$.
For $\iHE\pHE$ we first show that
\begin{align}\label{eq:ipfirst}
\iHE\pHE &\in \bPsdo^{0,\order{(0,\emptyset,0)}}(\Mfd;\VBrel{}{\Mfd}{\p\Mfd})
\end{align}
This holds separately for each of the three terms in 
$\iHE\pHE=\one-\ddR\GHE-\GHE\ddR$,
using Lemma \ref{lem:ddRNEW} and the composition rule in Lemma \ref{lem:composition}.
Since $\iHE\pHE$ maps to forms that vanish for $\bdf\le\frac12$,
its Schwartz kernel vanishes in a neighborhood of 
$\p\Mfd\times\Mfd\subset\Mfd^2$, which includes the corner $(\p\Mfd)^2$.
Hence its pullback to the b-double space
vanishes in a neighborhood of the front face.
Together with \eqref{eq:ipfirst} this implies 
\ref{item:HEAnalytic} for $\iHE\pHE$.
This concludes the proof for $\nBC=1$.

The proof for $\nBC\ge2$ is analogous,
using the assumption that the collars are disjoint.
The definition of $\GHE$ in \eqref{eq:GHEdef}
is replaced by $\GHE = \sum_{i=1}^{\nBC} \cutoff_i\Gcollar_i\cutoff_i$ 
with $\cutoff_i$ a cutoff for the collar of $\MfdBC_i$,
and $\Gcollar_i$ as in Lemma \ref{lem:HomotopyCollar} applied to $\Mclosed=\MfdBC_i$.

We comment on the case $\dim\Mfd=1$,
where the manifold is the interval $[0,1]$.
Here one must apply Lemma \ref{lem:HomotopyCollar}
with $\Mcollar=[0,1]$ to each boundary component $0$ and $1$ 
of $\Mfd$ separately.
It is easy to see that Lemma \ref{lem:HomotopyCollar} also holds
when $\Mcollar=[0,1]$, here $\Gcollar$ is given by $\pi_*h^*$ 
using the topological homotopy $h(s,t)=st$.\qed
\end{proof}

\subsection{Contraction from relative forms to homology}\label{sec:HodgeNEW}

We construct the contraction \eqref{eq:HEsSummary2}
using the Hodge decomposition.
For a compact Riemannian manifold $(\Mfd,g)$ with boundary let
\[ 
\Harm_g(\Mfd) = 
\big\{ \omega\in \Omega(\Mfd,\p\Mfd)\mid \ddR\omega=0,\ \delta_g\omega=0 \big\}
\]
be the finite-dimensional space of relative harmonic forms.
Here $\delta_g$ is the codifferential,
given by $\delta_g = (-1)^{\dim(\Mfd)(k+1)+1}\hodge_g\ddR\hodge_g$
on $k$-forms, with 
$\hodge_g$ the Hodge star operator.
Recall that the inclusion of $\Harm_g(\Mfd)$
into the kernel of $\ddR$ induces an isomorphism 
$$
\Harm_g(\Mfd)\simeq H(\Mfd,\p\Mfd)
$$
\begin{lemma}\label{lem:hodgecontractionNEW}
Let $(\Mfd,g)$ be a smooth compact Riemannian manifold with boundary.
Let $\phodge : \Omega(\Mfd,\p\Mfd) \to \Harm_g(\Mfd)$
be the $L^2$-orthogonal projection
using the inner product induced by the metric, and 
let $\ihodge: \Harm_g(\Mfd)\hookrightarrow\Omega(\Mfd,\p\Mfd)$
be the inclusion.
Then there exists an $\R$-linear map 
\begin{align*}
\Ghodge :\;
\Omega^k(\Mfd,\p\Mfd)\to\Omega^{k-1}(\Mfd,\p\Mfd)
\end{align*}
defined for every integer $k$, with the following properties:
\begin{enumerate}[label=(a\arabic*)]
\item \label{item:AlgHodgeNEW}
Algebraic control:
Viewing $\Harm_g(\Mfd)$ as a complex with zero differential,
the triple $(\phodge,\ihodge,\Ghodge)$ is a contraction
(see \eqref{eq:contraction}) from 
$\Omega(\Mfd,\p\Mfd)$ to $\Harm_g(\Mfd)$.
\item \label{item:AnalysisHodgeNEW}
Analytic control:
If $\cutoff\in C^\infty(\Mfd)$ vanishes 
in a neighborhood of $\p \Mfd$ then
\begin{align*} 
\Ghodge\cutoff &\in 
\Psdo^{-1,\order{(0,\emptyset)}}(\Mfd;\VBrel{}{\Mfd}{\p\Mfd})\\
\ihodge\phodge\cutoff &\in \Psdo^{-\infty,\order{(0,\emptyset)}}(\Mfd;\VBrel{}{\Mfd}{\p\Mfd})
\end{align*}
\end{enumerate}
\end{lemma}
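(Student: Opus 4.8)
The plan is to construct $\Ghodge$ from the classical Hodge--Morrey--Friedrichs decomposition for relative forms and the Green's operator of the Hodge Laplacian with relative (absolute-dual) boundary conditions, and then read off its microlocal properties from the pseudodifferential theory of the elliptic boundary value problem.

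\textbf{Construction.} Let $\Delta_g = \ddR\delta_g + \delta_g\ddR$ acting on $\Omega(\Mfd,\p\Mfd)$, with the relative boundary conditions $i^*\omega = 0$, $i^*(\delta_g\omega)=0$ (equivalently, $\omega$ and $\ddR\omega$ both have vanishing tangential part). This is a self-adjoint elliptic boundary value problem (it satisfies the Lopatinski--Shapiro condition), with finite-dimensional kernel $\Harm_g(\Mfd)$, and it admits a Green's operator $N_g$ — the inverse of $\Delta_g$ on the orthogonal complement of $\Harm_g(\Mfd)$, extended by zero on $\Harm_g(\Mfd)$. Thus $\one - \ihodge\phodge = \Delta_g N_g = N_g \Delta_g$ on $\Omega(\Mfd,\p\Mfd)$. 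Since $\Delta_g$ commutes with $\ddR$ and with $\delta_g$ on the relevant domains, so does $N_g$. I would then set
\[
\Ghodge \;=\; \delta_g\, N_g \;=\; N_g\, \delta_g .
\]
One must check $\Ghodge$ maps relative forms to relative forms: $N_g$ preserves the relative boundary conditions by construction, and $\delta_g$ maps the domain into relative forms because $i^*(\delta_g\omega)=0$ is part of the boundary condition; this is the standard point that $\delta_g$ applied to a form in the domain of the relative Hodge Laplacian again has vanishing tangential part.

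\textbf{Algebraic control \ref{item:AlgHodgeNEW}.} I would verify $(\phodge,\ihodge,\Ghodge)$ is a contraction directly. Since $N_g$ commutes with $\ddR$,
\[
\ddR\Ghodge + \Ghodge\ddR
= \ddR\delta_g N_g + \delta_g N_g \ddR
= (\ddR\delta_g + \delta_g\ddR) N_g
= \Delta_g N_g
= \one - \ihodge\phodge ,
\]
which is exactly $\one - \ihodge\phodge = \ddR\Ghodge + \Ghodge\ddR$ (with zero differential on $\Harm_g$), i.e.\ the homotopy-equivalence relation for the first leg; the second relation $\one - \phodge\ihodge = 0$ holds since $\phodge\ihodge = \one$ on $\Harm_g(\Mfd)$. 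That $\phodge,\ihodge$ are chain maps is trivial ($\ddR = 0$ on closed relative forms restricted to harmonic, and $\Harm_g$ carries the zero differential, while $\ihodge$ lands in closed forms). The identity $(\ihodge\phodge)^2 = \ihodge\phodge$ is orthogonal projection. This is all routine once $N_g$ is in hand.

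\textbf{Analytic control \ref{item:AnalysisHodgeNEW}.} This is the substantive part, though it is entirely classical interior elliptic theory because of the cutoff. Let $\cutoff\in C^\infty(\Mfd)$ vanish near $\p\Mfd$; choose $\cutoff'\in C^\infty(\Mfd)$ with $\cutoff'\equiv 1$ on $\supp\cutoff$ and $\cutoff'$ still vanishing near $\p\Mfd$. The key facts are: (i) $N_g$ is pseudolocal, so $\cutoff'' N_g \cutoff$ for $\cutoff''$ supported away from $\supp\cutoff$ is smoothing; and (ii) in the interior $N_g$ agrees microlocally with a parametrix of $\Delta_g$, which is a classical $\Psdo$ of order $-2$ on the open interior — elliptic regularity away from the boundary. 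Concretely one writes $N_g\cutoff = \cutoff' N_g \cutoff + (\one - \cutoff')N_g\cutoff$; the second term has Schwartz kernel supported where the left variable is away from $\supp\cutoff$, hence (by pseudolocality of the elliptic boundary problem) is smoothing with kernel vanishing to infinite order at all boundary faces, so it lies in $\Psdo^{-\infty,\order{(\emptyset,\emptyset)}}$. For the first term, $\cutoff' N_g \cutoff$ acts between sections supported in the interior, where the relative boundary conditions are invisible; there $\Delta_g$ is a genuine elliptic operator on a closed neighborhood and $\cutoff' N_g\cutoff$ is a classical $\Psdo$ of order $-2$ with kernel supported in the interior, hence in $\Psdo^{-2,\order{(\emptyset,\emptyset)}}(\Mfd;\VBrel{}{\Mfd}{\p\Mfd})$; adjusting the index-set bookkeeping (a kernel supported in the interior trivially vanishes to infinite order at left and right faces, and the front face is absent since $\ind{\bff{}}=\emptyset$ in the notation $\Psdo^{\bullet,(0,\emptyset)}$ — here the ``$0$'' slot records that it is a genuine $\Psdo$ with kernel classical-conormal at the interior diagonal and the ``$\emptyset$'' that it vanishes at the right face). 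Composing with $\delta_g$, a first-order differential operator, shifts the order by $+1$: $\Ghodge\cutoff = \delta_g N_g \cutoff \in \Psdo^{-1,\order{(0,\emptyset)}}(\Mfd;\VBrel{}{\Mfd}{\p\Mfd})$. Finally $\ihodge\phodge\cutoff$ has smooth Schwartz kernel (projection onto a finite-dimensional space of smooth forms) and, because $\cutoff$ vanishes near $\p\Mfd$, the kernel vanishes in a neighborhood of $\p\Mfd\times\Mfd$, hence to infinite order at the right face and at the front face; thus $\ihodge\phodge\cutoff \in \Psdo^{-\infty,\order{(0,\emptyset)}}(\Mfd;\VBrel{}{\Mfd}{\p\Mfd})$.

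\textbf{Main obstacle.} The one genuinely non-formal input is the existence and mapping properties of the Green's operator $N_g$ for the relative Hodge Laplacian — i.e.\ that the relative boundary value problem for $\Delta_g$ is elliptic in the Lopatinski--Shapiro sense and hence Fredholm with a pseudolocal Green's operator. This is classical (Morrey, Friedrichs, and the treatment in \cite[Section 5.9]{taylor1}), so I would cite it rather than reprove it; the rest is bookkeeping of support and orders. A minor care point is checking $\delta_g$ maps the domain of the relative Laplacian into relative forms and commutes appropriately with $N_g$, but this follows from the explicit form of the boundary conditions.
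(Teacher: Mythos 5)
Your construction and argument track the paper's proof essentially step for step: the paper likewise takes the Green's operator $\Gtay$ of the Hodge Laplacian with relative boundary conditions, sets $\Ghodge=(\delta_g\Gtay)|_{\Omega(\Mfd,\p\Mfd)}$, establishes the contraction identities via the uniqueness argument you gesture at for $(\ddR\Gtay-\Gtay\ddR)\omega=0$ on relative forms, and obtains the analytic control from interior ellipticity together with smoothness of the kernel near $\p\Mfd\times(\Mfd\setminus\p\Mfd)$ coming from boundary elliptic regularity. One bookkeeping slip worth fixing: the kernel of $(\one-\cutoff')N_g\cutoff$ does \emph{not} vanish to infinite order at the left face (its left variable ranges over a full neighborhood of $\p\Mfd$); it is merely smooth up to that face, i.e.\ has left-face index $0$ rather than $\emptyset$, which is exactly what the target space $\Psdo^{-1,\order{(0,\emptyset)}}$ requires, so your conclusion stands once the claimed index set for that term is corrected.
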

\begin{proof}
Denote by $i:\p\Mfd\hookrightarrow\Mfd$ the natural inclusion.
Denote by $\Ptay:\Omega(\Mfd) \to \Harm_g(\Mfd)$
the $L^2$-orthogonal projection, in particular
$\phodge$ is the restriction of $\Ptay$ to relative forms.
The Greens function of the Hodge Laplacian
$\Delta_g = -(\delta_g\ddR +\ddR\delta_g)$
is \cite[Chapter 5.9]{taylor1} the unique $\R$-linear map 
\begin{align}\label{eq:GtaymapNEW}
\Gtay :\; \Omega^k(\Mfd) \to \{ \omega \in\Omega^k(\Mfd)\mid i^*\omega = 0,\ i^*\delta_g\omega=0 \}
\end{align}
that satisfies
\begin{align}\label{eq:GtaypropNEW}
-\Delta_g \Gtay &= \one - \Ptay
\end{align}
and $\Gtay\Ptay =0$ and $\Ptay\Gtay =0$.
By \eqref{eq:GtaymapNEW} the 
composition $\delta_g\Gtay$ maps relative forms to relative forms,
and we now define 
$$
\Ghodge = (\delta_g\Gtay)|_{\Omega(\Mfd,\p\Mfd)}
$$
We show that this has the stated properties.
\ref{item:AlgHodgeNEW}:
We check \eqref{eq:HEcond}.
Clearly $\ihodge$ is a chain map;
$\phodge$ is a chain map because $\phodge\ddR=0$,
which follows from integrating by parts
where the boundary terms vanish for relative forms;
%
we have $\one-\ihodge \phodge = \ddR\Ghodge+\Ghodge\ddR$
by \eqref{eq:GtaypropNEW} 
and the fact that for all relative forms $\omega$ one has
$(\ddR\Gtay-\Gtay\ddR)\omega=0$ because the left hand side
is in the common kernel of $\Delta_g$ and $\Ptay$
and satisfies homogeneous relative boundary conditions;
clearly $\phodge\ihodge$ is the identity on $\Harm_g(\Mfd)$.
\ref{item:AnalysisHodgeNEW}:
We first show that
\[ 
\Gtay\cutoff \in \Psdo^{-2,\order{(0,\emptyset)}}(\Mfd;\wedge T^*\Mfd)
\]
Since $\Delta_g$ is elliptic, $\Gtay$ is a classical pseudodifferential
operator of order $-2$ in the interior of $\Mfd$.
Hence it remains to show that the Schwartz kernel of $\Gtay$ is smooth near every point in $\p\Mfd\times(\Mfd\setminus\p\Mfd)$.
Near such a point the kernel of the right hand side of \eqref{eq:GtaypropNEW} is smooth, 
therefore also the kernel of $\Gtay$ is smooth,
because it solves a regular elliptic boundary value problem 
with smooth data in the left factor of $\Mfd^2$, 
depending parametrically on the right factor.
Since $\delta_g$ is a first order differential operator we obtain 
$\delta_g\Gtay\cutoff \in \Psdo^{-1,\order{(0,\emptyset)}}(\Mfd;\wedge T^*\Mfd)$.
Together with the fact that $\Ghodge$ maps relative to relative forms
this implies \ref{item:AnalysisHodgeNEW} for $\Ghodge$.
The kernel of $\ihodge\phodge$ is smooth on $\Mfd^2$,
which implies \ref{item:AnalysisHodgeNEW} for $\ihodge\phodge$.\qed
\end{proof}

\subsection{Composition of homotopy equivalences}\label{sec:proof}
\newcommand{\phodgeprime}{\tc{lcolor}{P'}}
\newcommand{\ihodgeprime}{\tc{lcolor}{I'}}
\newcommand{\Ghodgeprime}{\tc{lcolor}{G'}}
\begin{proof}[of Theorem \ref{thm:main}]
Define the complex $\Omegacc(\Mfd)$ as in Lemma \ref{lem:contractiontocompact}.
Fix:
\begin{itemize}
\item 
Maps $\pHE,\iHE,\GHE,\GHEcc$ as in Lemma \ref{lem:contractiontocompact}.
\item 
A smooth Riemannian metric $g$ on $\Mfd$.\\
Then fix maps $\phodge,\ihodge,\Ghodge$ as in Lemma \ref{lem:hodgecontractionNEW}.
\end{itemize}

We compose the second and third homotopy equivalences in \eqref{eq:HEcompNEWNEW}.
By \ref{item:HEHomotopy} in Lemma \ref{lem:contractiontocompact},
\ref{item:AlgHodgeNEW} in Lemma \ref{lem:hodgecontractionNEW},
and Lemma \ref{lem:compHE}, the triple
\begin{align}\label{eq:HEcpthodgeNEW}
\big(\,\phodgeprime = \phodge \iHE\,,\ 
\ihodgeprime = \pHE \ihodge\,,\ 
\Ghodgeprime = \GHEcc + \pHE \Ghodge \iHE\,\big)
\end{align}
is a contraction
(see \eqref{eq:contraction})
from $\Omegacc(\Mfd)$ to $\Harm_g(\Mfd)$.
We now compose the first homotopy equivalence in \eqref{eq:HEcompNEWNEW} with \eqref{eq:HEcpthodgeNEW}.
By \ref{item:HEHomotopy} in Lemma \ref{lem:contractiontocompact},
the fact that \eqref{eq:HEcpthodgeNEW} is a contraction,
and Lemma \ref{lem:compHE}, the triple
\begin{align}
\label{eq:comp123NEW}
\big(\,\Pfull = \phodgeprime\pHE \,,\ 
\Ifull = \iHE \ihodgeprime \,,\ 
\Gfull = \GHE + \iHE \Ghodgeprime \pHE \,\big)
\end{align}
is a contraction from $\Omega(\Mfd,\p\Mfd)$ to $\Harm_g(\Mfd)$.
In particular, see \eqref{eq:HEcond},
\begin{subequations}
\begin{align}
&\text{$\Ifull,\Pfull$ are chain maps}\label{eq:IPchain}\\
&\one-\Ifull\Pfull=\ddR\Gfull+\Gfull\ddR \label{eq:IP}\\
&\one-\Pfull\Ifull=0\label{eq:PI}
\end{align}
\end{subequations}
We show that $\Gfull$ has the properties stated in Theorem \ref{thm:main}.
\ref{item:AlgFull}:
By \eqref{eq:IP} we have $\Pifull=\Ifull\Pfull$,
hence $\Pifull$ is a projection by \eqref{eq:PI},
and $\Pifull \ddR = 0$, $\ddR\Pifull = 0$
by \eqref{eq:IPchain} and the fact that the differential
on $\Harm_g(\Mfd)$ is zero.
\ref{item:SupportFull}:
By \eqref{eq:HEcpthodgeNEW} and \eqref{eq:comp123NEW},
\begin{align} \label{eq:Gfullformula}
\Gfull 
&= \GHE + \iHE\GHEcc\pHE + \iHE\pHE \Ghodge \iHE\pHE
\end{align}
The first term satisfies \eqref{eq:SupportFull}
by \ref{item:HESupport} in Lemma \ref{lem:contractiontocompact}.
The second and third terms both map to $\Omegacc(\Mfd)$, 
because $\pHE$ maps to $\Omegacc(\Mfd)$, 
$\GHEcc$ acts on $\Omegacc(\Mfd)$, 
and $\iHE$ is the inclusion.
Hence these two terms satisfy \eqref{eq:SupportFull} as well.
By 
\eqref{eq:HEcpthodgeNEW} and \eqref{eq:comp123NEW} we have
$\Pifull 
= \Ifull \Pfull 
= \iHE \pHE \ihodge  \phodge \iHE \pHE$, 
which maps to $\Omegacc(\Mfd)$ because $\pHE$ does.
\ref{item:AnalysisFull}:
We show that each of the three terms in \eqref{eq:Gfullformula} is in 
$\bPsdo^{-1,\order{(0,\emptyset,0)}}(\Mfd;\VBrel{}{\Mfd}{\p\Mfd})$:
\begin{itemize}
\item 
The first by \ref{item:HEAnalytic} in Lemma \ref{lem:contractiontocompact}.
\item 
The second is equal to $\GHE(\iHE\pHE)$, see \ref{item:HEHomotopy}
in Lemma \ref{lem:contractiontocompact}.
Thus it is in the given space by 
\ref{item:HEAnalytic} in Lemma \ref{lem:contractiontocompact}
and the composition rule in Lemma \ref{lem:composition}.
\item 
The third is equal to 
$(\iHE\pHE) (\Ghodge \cutoff) (\iHE\pHE)$
where $\cutoff\in C^\infty(\Mfd)$ is a cutoff
that vanishes identically in a neighborhood of $\p\Mfd$ 
and is equal to $1$ on the complement of $\cup_{i=1}^{\nBC}\bdf_i^{-1}([0,\frac{\eps_0}{4}])$.
By \ref{item:HEAnalytic} in Lemma \ref{lem:contractiontocompact},
\ref{item:AnalysisHodgeNEW} in Lemma \ref{lem:hodgecontractionNEW},
and the composition rule in Lemma \ref{lem:composition}
we have $(\iHE\pHE) (\Ghodge \cutoff)\in \Psdo^{-1,\order{(\emptyset,\emptyset)}}(\Mfd;\VBrel{}{\Mfd}{\p\Mfd})$.
Applying Lemma \ref{lem:composition} again yields the claim.
\end{itemize}
The statement for $\Pifull$ follows from 
$\Pifull = (\iHE \pHE)(\ihodge\phodge \cutoff)(\iHE\pHE)$, 
\ref{item:HEAnalytic} in Lemma \ref{lem:contractiontocompact},
\ref{item:AnalysisHodgeNEW} in Lemma \ref{lem:hodgecontractionNEW},
and Lemma \ref{lem:composition}.
Hence \ref{item:AnalysisFull} holds. \qed
\end{proof}
\section{Bogovskii type homotopy for de Rham on $\R^{\Mdim}$}\label{sec:Rdcor}
In this section we apply Theorem \ref{thm:main} 
to the radial compactification of $\R^{\Mdim}$ with $\Mdim\ge2$.
By definition, this is the compact manifold 
\begin{equation}\label{eq:radialcpt}
\Rrad{\Mdim}
=
\left( \R^{\Mdim} \sqcup ([0,\infty)\times S^{\Mdim-1})  \right)/{\sim}
\end{equation}
where $x\in\R^{\Mdim}$ and $(t,\omega)\in[0,\infty)\times S^{\Mdim-1}$
are equivalent, $x\sim(t,\omega)$, if and only if $tx=\omega$.
It is diffeomorphic to the closed unit ball in $\R^{\Mdim}$.
\step
Let $\jap{x} = \sqrt{1+\|x\|^2}$.
Then $\jap{x}^{-1}$ is smooth on $\Rrad{\Mdim}$,
and a boundary defining function for $\p\Rrad{\Mdim}$. 
The vector fields $\jap{x}\p_{x^1},\dots,\jap{x}\p_{x^{\Mdim}}$
are a basis of the b-vector fields $\bVF$ on $\Rrad{\Mdim}$,
see \eqref{eq:bvfbasis}.
This motivates the definition of the 
weighted b-Sobolev norms
$\|\cdot\|_{\jap{x}^{-\delta}\Hx^s(\R^{\Mdim})}$
on $\R^{\Mdim}$ in \eqref{eq:HxnormNEW}.
The smooth compactly supported functions
are dense in the corresponding weighted b-Sobolev
spaces $\jap{x}^{-\delta}\Hx^s(\R^{\Mdim})$.
\step
We denote by $\Omega_c(\R^{\Mdim})$ the 
space of forms with compact support in $\R^{\Mdim}$.
By the standard $dx$-basis we mean the
basis of $\Omega(\R^{\Mdim})$ given by all wedge products 
of zero or more of the $dx^1,\dots,dx^{\Mdim}$.

\begin{theorem}\label{thm:Mainx}
For every real number $R>0$ there exists an $\R$-linear map
\[ 
\Gfullx :\; 
\Omega^k(\Rrad{\Mdim},\p\Rrad{\Mdim})
\to
\Omega^{k-1}(\Rrad{\Mdim},\p\Rrad{\Mdim})
\]
defined for every integer $k$,
with the following properties:
\begin{enumerate}[label=(a\arabic*)]
\item \label{item:AlgFullx}
Algebraic control:
The map $\Pifullx$ defined by 
\begin{equation*}
\one-\Pifullx \;=\; \ddR\Gfullx + \Gfullx\ddR
\end{equation*}
satisfies $\Pifullx^2=\Pifullx$ and $\Pifullx\ddR =0$ and $\ddR\Pifullx=0$.
In particular, $\Pifullx$ is a projection onto 
a complement of the image of $\ddR$ in the kernel of $\ddR$.
It is equivalently given as follows:
There exists $\mu\in\Omega^{\Mdim}(\R^{\Mdim})$
with $\mu|_{\|x\|\ge R}=0$ and $\int_{\R^{\Mdim}}\mu=1$ such that 
for all $\omega\in \Omega^{k}(\Rrad{\Mdim},\p\Rrad{\Mdim})$,
\begin{align}\label{eq:piformula}
\Pifullx\omega
\;=\;
\begin{cases}
\mu \int_{\R^{\Mdim}}\omega & \text{if $k=\Mdim$}\\
0 & \text{if $k\le\Mdim-1$}
\end{cases} 
\end{align}
\item \label{item:SupportFullx}
Support control: 
For all $r\ge R$ and all 
$\omega\in \Omega(\Rrad{\Mdim},\p\Rrad{\Mdim})$,
\begin{align*}
\omega|_{\|x\|\ge r}=0 
\qquad&\Rightarrow\qquad
(\Gfullx\omega)|_{\|x\|\ge r}=0
\end{align*}
\item \label{item:AnalysisFullx}
Analytic control: 
For all $k=0\dots\Mdim$ and
$s,\delta\in\R$ with $\delta>k-\frac{\Mdim}{2}$ 
there exists a constant $C>0$ such that for all 
$\omega\in\Omega^k_c(\R^{\Mdim})$,
\begin{equation}
\|\Gfullx\omega\|_{\jap{x}^{-\delta+1}\Hx^{s+1}(\R^{\Mdim})}
\;\le\;
C
\|\omega\|_{\jap{x}^{-\delta}\Hx^{s}(\R^{\Mdim})}
\label{eq:estGx}
\end{equation}
where the norms are taken componentwise 
using the standard $dx$-basis.

\item \label{item:CommutatorFullx}
For all $k=0\dots\Mdim$ and $s,\delta\in\R$ with $\delta>k-\frac{\Mdim}{2}$,
all $i=1\dots\Mdim$ and all $\ell\in\Nzero$,
there exists $C>0$ such that for all $\omega\in\Omega^k_c(\R^{\Mdim})$,
\[ 
\left\|\left[\Gfullx,\jap{x}^{1-\ell}\Lie_{\p_{x^i}}\right]\omega\right\|_{\jap{x}^{-\delta+1-\ell}\Hx^{s+1}(\R^{\Mdim})}
\;\le\;
C
\|\omega\|_{\jap{x}^{-\delta}\Hx^{s}(\R^{\Mdim})}
\]
where the norms are taken componentwise using the standard $dx$-basis.
Here $[\,\cdot\,,\,\cdot\,]$ is the commutator, 
and $\Lie_{\p_{x^i}}$ is the Lie derivative of forms.
\end{enumerate}
\end{theorem}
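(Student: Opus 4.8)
The plan is to deduce Theorem~\ref{thm:Mainx} from Theorem~\ref{thm:main} applied to $\Mfd=\Rrad{\Mdim}$, a connected compact manifold (for $\Mdim\ge2$) with connected boundary $\p\Rrad{\Mdim}\simeq S^{\Mdim-1}$, so $\nBC=1$. Given $R>0$, I would first fix a boundary defining function $\bdf$ for $\p\Rrad{\Mdim}$ together with $\eps_0>0$ such that $\bdf^{-1}([0,\eps_0])$ is a collar neighborhood and, on this collar, $\bdf$ is a strictly decreasing function of $\|x\|$ with $\bdf^{-1}([0,\tfrac{\eps_0}{2}])=\{x\in\R^\Mdim\mid\|x\|\ge R\}\cup\p\Rrad{\Mdim}$; such $\bdf$ exists for every $R>0$ because $\{\|x\|\ge R\}$ together with the sphere at infinity is a collar of $\p\Rrad{\Mdim}$. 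Applying Theorem~\ref{thm:main} to this data produces a map which I take to be $\Gfullx$. Since any two boundary defining functions of $\p\Rrad{\Mdim}$ differ by a positive smooth factor, the weighted b-Sobolev norms built from $\bdf$ and from $\jap{x}^{-1}$ are equivalent, which is what will let me phrase the analytic estimates in terms of $\jap{x}$.

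Items \ref{item:AlgFullx} and \ref{item:SupportFullx} are then nearly immediate. The projection $\Pifullx=\Pifull$ of Theorem~\ref{thm:main}\ref{item:AlgFull} projects onto relative homology, which by the identification recalled in the introduction is the compactly supported de Rham cohomology of $\R^\Mdim$; this vanishes in degrees $\le\Mdim-1$ and is one-dimensional in degree $\Mdim$, so $\Pifull=0$ on $k$-forms for $k\le\Mdim-1$. In degree $\Mdim$ I would choose $\omega_0\in\Omega^\Mdim_c(\R^\Mdim)$ with $\int\omega_0=1$ and set $\mu=\Pifull\omega_0$. The ``moreover'' part of \ref{item:SupportFull} gives $\mu|_{\bdf\le\eps_0/2}=0$, i.e.\ $\mu|_{\|x\|\ge R}=0$; and Stokes' theorem on $\Rrad{\Mdim}$ applied to the relative exact form $\omega-\Pifull\omega=\ddR\Gfull\omega$ (using $\ddR\omega=0$ in top degree) gives $\int_{\R^\Mdim}\omega=\int_{\R^\Mdim}\Pifull\omega$ for every $\omega\in\Omega^\Mdim(\Rrad{\Mdim},\p\Rrad{\Mdim})$. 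In particular $\int\mu=1$, so $\mu\ne0$ and hence $\Pifull\omega=(\int_{\R^\Mdim}\omega)\mu$, which is \eqref{eq:piformula}. Support control \ref{item:SupportFullx} follows from \ref{item:SupportFull}: for $r\ge R$ one has $\{\|x\|\ge r\}=\{\bdf\le\eps\}$ for some $\eps\in(0,\tfrac{\eps_0}{2}]$, by the monotone choice of $\bdf$.

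The substance of the corollary is \ref{item:AnalysisFullx}. By Theorem~\ref{thm:main}\ref{item:AnalysisFull}, $\Gfullx\in\bPsdo^{-1,\order{(0,\emptyset,0)}}(\Rrad{\Mdim};\VBrel{}{\Rrad{\Mdim}}{\p\Rrad{\Mdim}})$, so Lemma~\ref{lem:Mappingprop} and Remark~\ref{rem:normsVB} give, for every $\beta>-\tfrac12$, a constant $C$ with $\|\Gfullx\omega\|_{\bdf^{\beta}\Hsob^{s+1}(\Rrad{\Mdim})}\le C\|\omega\|_{\bdf^{\beta}\Hsob^{s}(\Rrad{\Mdim})}$ componentwise in a smooth basis of relative forms; here the empty left-face index removes any constraint on the target weight, the vanishing front-face index forces the two weights to be equal, and the vanishing right-face index yields exactly $\beta>-\tfrac12$. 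It then remains to convert these norms into the norms \eqref{eq:HxnormNEW}. For the scalar comparison I would invoke Lemma~\ref{lem:ComparabilityOfNorms}: a smooth positive density on $\Rrad{\Mdim}$ is, near the boundary, $\jap{x}^{-\Mdim-1}$ times Lebesgue measure, producing a weight shift of $\tfrac{\Mdim+1}{2}$. For the basis, one uses that $dx^i=\jap{x}$ times a smooth b-1-form on $\Rrad{\Mdim}$ and that relative $m$-forms are $\jap{x}^{-1}$ times b-$m$-forms, so the components of an $m$-form in the standard $dx$-basis are $\jap{x}^{m+1}$ times a change of its components in a smooth basis of relative $m$-forms that is regular up to the boundary. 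Combining, $\|\cdot\|_{\jap{x}^{-\mu}\Hx^{s'}(\R^\Mdim)}$ on $m$-forms corresponds to $\|\cdot\|_{\bdf^{\,\mu+\frac{\Mdim-1}{2}-m}\Hsob^{s'}(\Rrad{\Mdim})}$. Applied to the input ($m=k$, $\mu=\delta$) and to the output ($m=k-1$, $\mu=\delta-1$) this yields the \emph{same} b-weight $\beta=\delta+\tfrac{\Mdim-1}{2}-k$ on both sides — which is precisely why \eqref{eq:estGx} shifts the weight by one from input to output — and the admissibility condition $\beta>-\tfrac12$ becomes exactly $\delta>k-\tfrac{\Mdim}{2}$. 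Since $\omega\in\Omega^k_c(\R^\Mdim)$ is a relative form vanishing near $\p\Rrad{\Mdim}$, hence lies in $\Cvan^\infty$ of the relative-forms bundle, Lemma~\ref{lem:Mappingprop} applies, and chaining the three estimates gives \eqref{eq:estGx}.

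For \ref{item:CommutatorFullx} I would write $\jap{x}^{1-\ell}\Lie_{\p_{x^i}}=\bdf^{\ell}A_i$ with $\bdf=\jap{x}^{-1}$ and, via Cartan's formula together with $d\jap{x}=-\bdf^{-1}\tfrac{d\bdf}{\bdf}$ and $\p_{x^i}=\bdf\,\jap{x}\p_{x^i}$, $A_i:=\jap{x}\Lie_{\p_{x^i}}=\Lie_{\jap{x}\p_{x^i}}+\tfrac{d\bdf}{\bdf}\wedge\iota_{\jap{x}\p_{x^i}}$. Because $\jap{x}\p_{x^i}$ is a b-vector field, both summands preserve relative forms and $A_i$ is a first-order b-differential operator on $\VBrel{}{\Rrad{\Mdim}}{\p\Rrad{\Mdim}}$, so $A_i\in\bPsdo^{1,\order{(0,\emptyset,\emptyset)}}$ by \eqref{eq:bvpsdo}; decomposing $A_i$ locally into b-vector-field terms and a bundle-map term, Lemma~\ref{lem:commutator} and Remark~\ref{rem:normsVB} give $[\Gfullx,\jap{x}^{1-\ell}\Lie_{\p_{x^i}}]=[\Gfullx,\bdf^{\ell}A_i]\in\bPsdo^{-1,\order{(\ell,\emptyset,0)}}(\Rrad{\Mdim};\VBrel{}{\Rrad{\Mdim}}{\p\Rrad{\Mdim}})$. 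Feeding this into Lemma~\ref{lem:Mappingprop} with input b-weight $\beta$ and output b-weight $\beta+\ell$ — consistent with the front-face index $\ell$ and with the extra $\jap{x}^{-\ell}$ decay demanded of the output — and translating norms exactly as in the previous paragraph yields the stated bound, again under $\delta>k-\tfrac{\Mdim}{2}$. The step I expect to be most delicate is the norm bookkeeping in the third paragraph: one has to track the measure shift $\tfrac{\Mdim+1}{2}$, the basis shift $-(m+1)$, and the degree drop of $\Gfull$ so that the two sides land on the same b-weight and the threshold is exactly $\delta>k-\tfrac{\Mdim}{2}$; a secondary point is to check, near the boundary where $d\jap{x}$ is a singular b-form, that $A_i$ genuinely preserves the relative-forms bundle.
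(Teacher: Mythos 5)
Your proposal is correct and follows essentially the same route as the paper: apply Theorem~\ref{thm:main} to $\Rrad{\Mdim}$ with a boundary defining function tailored to make the support condition match $\{\|x\|\ge r\}$, identify the projection $\Pifullx$ via $\dim H^k(\Rrad{\Mdim},\p\Rrad{\Mdim})$ and Stokes, translate the $\bPsdo^{-1,\order{(0,\emptyset,0)}}$ mapping estimate from Lemma~\ref{lem:Mappingprop} into $\Hx$-norms using Lemma~\ref{lem:ComparabilityOfNorms}, and handle the commutator via the observation that $\jap{x}\Lie_{\p_{x^i}}$ is a first-order b-differential operator together with Lemma~\ref{lem:commutator}. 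The only cosmetic differences are that the paper fixes $\bdf=1/\|x\|$ on the collar rather than merely a monotone $\bdf$, and that your Cartan-formula derivation of b-differentiality of $\jap{x}\Lie_{\p_{x^i}}$ is carried out more explicitly than in the paper (which reads it off from the basis \eqref{eq:relbasNEW}); one phrasing slip — the standard $dx$-components of a form are $\jap{x}^{-(m+1)}$, not $\jap{x}^{m+1}$, times the components in the basis \eqref{eq:relbasNEW} — does not affect your final weight bookkeeping, which is correct.
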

Note that by \ref{item:AnalysisFullx} and by density,
the map $\Gfullx$ extends
to a bounded linear map between the corresponding weighted b-Sobolev spaces.

Note that $\Gfullx$ restricts to
a chain homotopy for the complex of compactly supported forms $\Omega_c(\R^{\Mdim})$.
It also restricts to a chain homotopy for the complex of forms 
whose components are Schwartz functions,
and the estimates \ref{item:AnalysisFullx} and \ref{item:CommutatorFullx} 
also hold for such forms with Schwartz components.
\step
For later reference observe that 
a $C^\infty(\Rrad{\Mdim})$-basis 
of $\Omega^k(\Rrad{\Mdim},\p\Rrad{\Mdim})$ is given by
\begin{equation}\label{eq:relbasNEW}
\jap{x}^{-(k+1)} dx^{i_1} \wedge\cdots\wedge dx^{i_k}\;,
\qquad
1\le i_1<\dots<i_k\le \Mdim
\end{equation}

\begin{proof}
Fix $\eps_0=\frac{2}{R}$. Fix a boundary defining function
$\bdf\in C^\infty(\Rrad{\Mdim})$ for $\p\Rrad{\Mdim}$
such that $\bdf^{-1}([0,\eps_0])$ is a collar neighborhood
in the sense of Theorem \ref{thm:main} and 
\begin{equation}\label{eq:bdf1/x}
\bdf(x) = \tfrac{1}{\|x\|} \qquad \text{on $\bdf^{-1}([0,\eps_0])$}
\end{equation}
One may set \smash{$\bdf(x) =(\|x\|^2+\cutoff(x)^2)^{-\frac12}$}
where \smash{$\cutoff:\Rrad{\Mdim}\to [0,\frac{R}{4}]$} is a smooth 
function that satisfies
$\cutoff(x)=0$ when $\|x\|\ge\frac{R}{4}$
and $\cutoff(x)=\tfrac{R}{4}$ when $\|x\|\le\frac{R}{8}$.

Let $\Gfullx$ be a map as in Theorem \ref{thm:main} 
applied to $\Mfd=\Rrad{\Mdim}$,
the boundary defining function $\bdf$, 
and $\eps_0=\frac{2}{R}$.
We check that this has the stated properties.

\ref{item:AlgFullx}:
The identities $\Pifullx^2=\Pifullx$, $\Pifullx\ddR =0$, $\ddR\Pifullx=0$
hold by \ref{item:AlgFull} in Theorem \ref{thm:main}.
We show \eqref{eq:piformula}. 
Recall from standard de Rham homology that 
\begin{align}\label{eq:Hkrel}
\dim H^k(\Rrad{\Mdim},\p\Rrad{\Mdim}) 
=
\begin{cases}
1 & \text{if $k=\Mdim$}\\
0 & \text{if $k\le\Mdim-1$}
\end{cases} 
\end{align}
Hence $\Pifullx$ vanishes in degree $k\le \Mdim-1$
and its restriction to degree $\Mdim$ has rank one.
Elements in $\image(\Pifullx)$ vanish on $\|x\|\ge R$
by \ref{item:SupportFull} in Theorem \ref{thm:main} and \eqref{eq:bdf1/x}.
Hence there exists $\mu$ as in \ref{item:AlgFullx},
and a linear map $p:\Omega^{\Mdim}(\Rrad{\Mdim},\p\Rrad{\Mdim})\to\R$,
such that 
$\Pifullx \omega = \mu\, p (\omega)$
for all $\omega\in\Omega^{\Mdim}(\Rrad{\Mdim},\p\Rrad{\Mdim})$.
Since $\image(\ddR)\subset\ker(p)$,
the map $p$ is the integral over $\R^{\Mdim}$, up to a multiplicative constant.
Since $\Pifullx^2=\Pifullx$ and $\int_{\R^{\Mdim}}\mu=1$, this multiplicative constant is one.
This shows \eqref{eq:piformula}. 

\ref{item:SupportFullx}: This is a direct consequence of 
\ref{item:SupportFull} in Theorem \ref{thm:main}, using \eqref{eq:bdf1/x}.

\ref{item:AnalysisFullx}:
By \ref{item:AnalysisFull} in Theorem \ref{thm:main} 
and Lemma \ref{lem:Mappingprop}, 
see also Remark \ref{rem:normsVB},
for all $s,\delta_0\in\R$
with $\delta_0>-\frac12$ 
there exists $C_0>0$ such that for all 
$\omega\in\Omega^k_c(\R^{\Mdim})$,
\[
\|\Gfullx\omega\|_{\jap{x}^{-\delta_0}\Hsob^{s+1}(\Rrad{\Mdim})}
\le 
C_0 \|\omega \|_{\jap{x}^{-\delta_0}\Hsob^s(\Rrad{\Mdim})}
\]
where the norms are taken componentwise 
using the $C^\infty(\Rrad{\Mdim})$-basis \eqref{eq:relbasNEW} of 
$\Omega(\Rrad{\Mdim},\p\Rrad{\Mdim})$.
To be sure, this is a different basis than used in \eqref{eq:estGx}.
Now the claim follows from Lemma \ref{lem:ComparabilityOfNorms} below
and setting $\delta = \delta_0+k+1-\frac{\Mdim+1}{2}$.

\ref{item:CommutatorFullx}:
Using \eqref{eq:relbasNEW} it is easy to see that 
$\jap{x}\Lie_{\p_{x^i}}:
\Omega(\Rrad{\Mdim},\p\Rrad{\Mdim})\to\Omega(\Rrad{\Mdim},\p\Rrad{\Mdim})$
is a first order b-differential operator, see \eqref{eq:firstorderb}.
Hence by Lemma \ref{lem:commutator},
\[ 
\left[\Gfullx,\jap{x}^{-\ell+1}\Lie_{\p_{x^i}}\right]
\;\in\;
\bPsdo^{-1,\order{(\ell,\emptyset,0)}}(\Rrad{\Mdim};\VBrel{}{\Rrad{\Mdim}}{\p\Rrad{\Mdim}})
\]
Now one can conclude \ref{item:CommutatorFullx} analogously
to the proof of \ref{item:AnalysisFullx}.\qed
\end{proof}
\begin{remark}\label{rem:CompareBog}
The restriction of $\Gfullx$ in Theorem \ref{thm:Mainx} to top degree forms
is a right inverse of 
the divergence operator on $\R^{\Mdim}$ for the Euclidean metric,
up to the usual integrability condition.
We compare this to the Bogovskii right inverse  \eqref{eq:translaverage}.
The Bogovskii inverse satisfies \ref{item:SupportFullx}.
It does not satisfy \ref{item:AnalysisFullx}, 
that is, there exists no constant $C>0$ such that 
\eqref{eq:estGx} holds for all compactly supported forms $\omega$, 
for no $s$ and $\delta$. To see this, one can apply the Bogovskii inverse
to a moving bump for which the norm on the right hand side is fixed,
and show that the left hand side grows as the bump
moves to infinity.
\new{The Bogovskii inverse does satisfy \ref{item:AnalysisFullx}
if one restricts to forms $\omega$ whose support is contained
in a fixed large ball centered at the origin.}
\end{remark}
\begin{lemma}[Equivalence of norms]\label{lem:ComparabilityOfNorms}
Let $\delta,s\in\R$ and $k=0\dots\Mdim$.
Recall the following b-Sobolev norms for 
compactly supported functions $f$ on $\R^{\Mdim}$:
\begin{itemize}
\item 
$\|f\|_{\Hsob^s(\Rrad{\Mdim})}$ is the norm \eqref{eq:Hsobnorm} with $\Mfd=\Rrad{\Mdim}$.
\item 
$\|f\|_{\Hx^s(\R^{\Mdim})}$ is the norm \eqref{eq:HxnormNEW}.
\end{itemize}
We use the following b-Sobolev norms for 
$k$-forms $\omega\in \Omega_c^k(\R^{\Mdim})$:
\begin{itemize}
\item 
$\|\omega\|_{\Hsob^s(\Rrad{\Mdim},\VBrel{k}{\Rrad{\Mdim}}{\p\Rrad{\Mdim}})}$
is the norm given by applying $\|\cdot\|_{\Hsob^s(\Rrad{\Mdim})}$ 
componentwise, using the $C^\infty(\Rrad{\Mdim})$-basis 
\eqref{eq:relbasNEW} of the relative forms
$\Omega^k(\Rrad{\Mdim},\p\Rrad{\Mdim})$.
\item 
$\|\omega\|_{\Hx^s(\R^{\Mdim},\wedge^kT^*\R^{\Mdim})}$
is the norm given by applying $\|\cdot\|_{\Hx^s(\R^{\Mdim})}$ 
componentwise, using the standard $dx$-basis of the forms $\Omega^k(\R^{\Mdim})$.
\end{itemize}
Then the following norms are equivalent:
\begin{align}
&\|\cdot\|_{\jap{x}^{-\delta}\Hsob^s(\Rrad{\Mdim})}
&\text{and}&
&&\|\cdot\|_{\jap{x}^{-(\delta-\smash{\frac{\Mdim+1}{2}})}\Hx^s(\R^{\Mdim})}
\label{eq:SobolevCompare1}\\
&\|\cdot\|_{\jap{x}^{-\delta}\Hsob^s(\Rrad{\Mdim},
\VBrel{k}{\Rrad{\Mdim}}{\p\Rrad{\Mdim}})}
&\text{and}&
&&\|\cdot\|_{\jap{x}^{-(\delta+k+1-\smash{\frac{\Mdim+1}{2}})}\Hx^s(\R^{\Mdim},\wedge^k T^*\R^{\Mdim})}
\label{eq:SobolevCompare2}
\end{align}
\end{lemma}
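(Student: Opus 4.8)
The plan is to reduce everything to the scalar case \eqref{eq:SobolevCompare1}, since \eqref{eq:SobolevCompare2} then follows by expanding a $k$-form $\omega$ in the two bases and tracking the scalar weight that relates them. Concretely, the $C^\infty(\Rrad{\Mdim})$-basis element $\jap{x}^{-(k+1)}dx^{i_1}\wedge\cdots\wedge dx^{i_k}$ from \eqref{eq:relbasNEW} differs from the plain $dx^{i_1}\wedge\cdots\wedge dx^{i_k}$ of the standard $dx$-basis by the factor $\jap{x}^{-(k+1)}$. Hence if $\omega = \sum_I f_I\, dx^I$ in the $dx$-basis, then in the relative basis the components are $\jap{x}^{k+1}f_I$. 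Away from a fixed ball $\jap{x}$ and $\jap{x}^{-1}$ are both smooth positive functions on $\Rrad{\Mdim}$, and $\jap{x}^{\pm 1}$ commutes with the operators $\jap{x}\p_{x^i}$ up to lower-order b-differential operators (indeed $[\jap{x}\p_{x^i},\jap{x}^m]\in \jap{x}^m\cdot C^\infty(\Rrad{\Mdim})$ is a zeroth-order multiplication), so multiplication by $\jap{x}^{k+1}$ is a bounded isomorphism $\jap{x}^{\delta}\Hx^s\to\jap{x}^{\delta+k+1}\Hx^s$ for every $s\in\R$ (first for $s\in\Nzero$ by the product rule, then for all $s$ by interpolation and duality). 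Combining this with \eqref{eq:SobolevCompare1} applied with the shifted weight $\delta+k+1$ gives \eqref{eq:SobolevCompare2}. So the crux is \eqref{eq:SobolevCompare1}.

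For \eqref{eq:SobolevCompare1} the first step is to compare the two $L^2$ norms, i.e.\ the case $s=0$. The norm $\|f\|_{\Hx^0(\R^{\Mdim})}$ uses the Lebesgue measure $dx$, while $\|f\|_{\Hsob^0(\Rrad{\Mdim})}$ uses a smooth positive measure $\nu$ on the compact manifold $\Rrad{\Mdim}$. In the collar $\{\bdf\le\eps_0\}$ with the identification coming from $\bdf=1/\|x\|$ and angular coordinates $\omega\in S^{\Mdim-1}$, one has $dx = \|x\|^{\Mdim-1}d\|x\|\,d\omega = \bdf^{-\Mdim-1}\,d\bdf\,d\omega$ up to a smooth positive factor, while $\nu$ is comparable to $d\bdf\,d\omega$. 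Therefore $dx = a\,\bdf^{-(\Mdim+1)}\nu$ near the boundary with $a$ smooth and positive, so $\|f\|_{\Lx(\R^{\Mdim})}$ is comparable to $\|\bdf^{-\frac{\Mdim+1}{2}}f\|_{\Lb(\Rrad{\Mdim})} = \|\jap{x}^{\frac{\Mdim+1}{2}}f\|_{\Lb(\Rrad{\Mdim})}$, which is exactly \eqref{eq:SobolevCompare1} for $s=0$ with the weight shift $\delta\mapsto\delta-\frac{\Mdim+1}{2}$. Away from the collar, all measures and weights are comparable and there is nothing to prove. The second step handles $s\in\Nzero$: the b-vector fields $\jap{x}\p_{x^i}$ form a $C^\infty(\Rrad{\Mdim})$-generating set of $\bVF$ (as noted in Section \ref{sec:Rdcor}), and a finite generating set of $\bVF$ used in \eqref{eq:Hsobnorm} can be written as $C^\infty(\Rrad{\Mdim})$-combinations of the $\jap{x}\p_{x^i}$ and vice versa; since b-vector fields preserve $C^\infty(\Rrad{\Mdim})$ and the weight $\jap{x}^m$, repeated application of these two generating sets produces equivalent norms, giving \eqref{eq:SobolevCompare1} for $s\in\Nzero$ (with the $L^2$-comparison from the first step absorbing the measure discrepancy after commuting the weight past the vector fields). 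The third step extends to all real $s$ by interpolation and duality, which is legitimate because both families of weighted b-Sobolev spaces are defined precisely by interpolation and duality from the nonnegative-integer case, and the comparison is a uniform norm equivalence at each integer order.

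The main obstacle I anticipate is bookkeeping rather than conceptual: one must be careful that the weight $\jap{x}$ (a boundary defining function up to a smooth positive factor) interacts correctly with the b-vector fields — i.e.\ that conjugating a b-differential operator by $\jap{x}^m$ stays within the b-calculus with the same order — and that the measure mismatch $dx$ versus $\nu$ contributes exactly the power $\bdf^{-(\Mdim+1)/2}$ and no logarithmic or lower-order corrections. Both points are standard (the first is the statement that $\jap{x}^m\bVF\jap{x}^{-m}=\bVF+\jap{x}\cdot 0$-th order terms, the second is the Jacobian computation above), but they need to be stated cleanly. Everything else — reducing \eqref{eq:SobolevCompare2} to the scalar case, the interpolation/duality extension — is routine.
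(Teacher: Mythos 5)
Your proposal is correct and follows essentially the same route as the paper's (much terser) proof: reduce to $s=0$ via the generating-set observation for $\bVF$ plus interpolation and duality, compare the $L^2$ norms using the fact that $\jap{x}^{-(\Mdim+1)}dx$ is a smooth positive measure on $\Rrad{\Mdim}$, and obtain \eqref{eq:SobolevCompare2} from \eqref{eq:SobolevCompare1} by the change of basis between \eqref{eq:relbasNEW} and the standard $dx$-basis, which shifts the weight by $k+1$. The only cosmetic slip is the phrase claiming $\jap{x}$ is smooth on $\Rrad{\Mdim}$ away from a ball (it is smooth only in the interior; only $\jap{x}^{-1}$ extends smoothly to the boundary), but this does not affect the argument since the weight shift is an exact identity of weighted norms.
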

\begin{proof}
It suffices to consider $\delta=0$. 
It suffices to consider $s=0$,
using interpolation and duality, and the fact that 
$\jap{x}\p_{x^1},\dots,\jap{x}\p_{x^{\Mdim}}$ is a basis of $\bVF$.
Then \eqref{eq:SobolevCompare1} holds because $\jap{x}^{-(\Mdim+1)}dx^1\cdots dx^{\Mdim}$ is a smooth 
positive measure on $\Rrad{\Mdim}$;
for \eqref{eq:SobolevCompare2} 
use the change of basis between 
\eqref{eq:relbasNEW} and the standard $dx$-basis.
\qed
\end{proof}
For some applications, the following corollary is useful.
\begin{corollary}\label{cor:thm2}
For every $R>0$
there exists a map $\Gfullx$ as in Theorem \ref{thm:Mainx}
that in addition satisfies $\Pifullx\Gfullx=0$ and $\Gfullx\Pifullx=0$ and $\Gfullx^2=0$.
\end{corollary}
\begin{proof}
Let $\Gfullx_0$, $\Pifullx_0$ 
be as in Theorem \ref{thm:Mainx} (called $\Gfullx$, $\Pifullx$ there) and set 
$\Gfullx = \Gfullx_0\ddR \Gfullx_0(\one-\Pifullx_0)$;
this agrees with the formula in Remark \ref{rem:sideconditions} by
$\Pifullx_0\ddR=0$, $\ddR\Pifullx_0=0$, $\Pifullx_0\Gfullx_0=0$.
By \ref{item:AlgFullx} in Theorem \ref{thm:Mainx} for $\Gfullx_0$ 
and Remark \ref{rem:sideconditions},
$\Gfullx$ satisfies \ref{item:AlgFullx} 
(with $\Pifullx=\Pifullx_0$)
and the additional identities in the lemma.
The remaining properties in Theorem \ref{thm:Mainx} for $\Gfullx$
are immediate from the corresponding properties for $\Gfullx_0$.\qed
\end{proof}

\appendix

\section[Right inverse of
divergence on symmetric traceless matrices]{Right inverse of \\
divergence on symmetric traceless matrices}\label{ap:div2}
Let $\div$ be the divergence operator on $\R^{3}$,
viewed as a map from vector fields to functions.
Denote by $\divst$ the divergence from symmetric traceless 3-by-3 matrices
to vectors, which is given by applying $\div$ to each matrix column.
This map appears for example in the linearized
constraint equations of general relativity about flat space,
see Remark \ref{rem:linconstraints}.
In this section we construct a right inverse of $\divst$
up to ten necessary integrability conditions.
This right inverse has the same properties with regard to support and estimates 
as the right inverse of $\div$ obtained in Theorem \ref{thm:Mainx}.
In fact it 
will be obtained from the right inverse of $\div$ 
via a purely algebraic construction, 
no further analysis is required.
\step
Let $\func = C^\infty_c(\R^3)$ be the smooth
compactly supported functions on $\R^3$ and define
\begin{align*}
\vect & = \func^{\oplus3} && \text{column vectors with 3 entries}\\
\mat & = \vect^{\oplus3} &&\text{3-by-3 matrices}\\
\smash{\matst}&\simeq \func^{\oplus5} && \text{symmetric traceless 3-by-3 matrices}
\end{align*}
The operator $\divst$ is part of the elliptic complex 
\begin{align}\label{eq:CCdef}
0 \to 
\vect \xrightarrow{\gradst} 
\matst \xrightarrow{\CURL} 
\matst \xrightarrow{\divst}
\vect \to0
\end{align}
where the maps are defined as follows.
Let $\grad:\func\to\vect$, $\curl:\vect\to\vect$, $\div:\vect\to\func$
be the usual gradient, curl and divergence. 
We also denote
\begin{align}\label{eq:gradcurldivonmatrices}
\begin{aligned}
\grad: \vect&\to\mat 
&
v&\mapsto 
\begin{pmatrix}  \grad v_1 & \grad v_2 & \grad v_3\end{pmatrix} \\
\curl: \mat&\to\mat 
&
m&\mapsto 
\begin{pmatrix}
\curl{m}_1 &
\curl{m}_2 &
\curl{m}_3
\end{pmatrix} \\
\div: \mat&\to\vect
&
m&\mapsto
\begin{pmatrix}
\div m_{1}&
\div m_{2}&
\div m_{3}
\end{pmatrix}^{\smash{T}}
\end{aligned}
\end{align}
where $v_i\in\func$ is the $i$th entry of the vector $v$ 
and $m_i\in\vect$ is the $i$th column of the matrix $m$.
Let $\symtr:\mat\to\matst$,
$m\mapsto \frac12(m + m^T) - \frac13 \tr(m) \one_3$
with $\one_3$ the 3-by-3 identity matrix.
Further let $\Mincl:\matst\to\mat$ be the inclusion.
Set
\begin{align*}
\qquad\qquad\qquad\qquad\qquad\qquad\gradst &= \symtr\circ \grad :& \vect &\to \matst\qquad\qquad\qquad\qquad\qquad\qquad\\
\curlst &= \symtr\circ \curl\circ \Mincl :& \matst &\to \matst\\
\divst &= \div\circ \Mincl :& \matst &\to \vect
\end{align*}
which in particular defines the first arrow in \eqref{eq:CCdef}.
The second arrow is 
\[ 
\CURL\;=\; \curlst\circ\gradst \circ \divst 
-\tfrac12  \Delta\circ \curlst 
\;:\;\;
\matst\to\matst
\]
where $\Delta=\p_{x^1}^2+\p_{x^2}^2+\p_{x^3}^2$ is
applied entrywise.
This is the up-to-normalization unique constant coefficient
homogeneous third order differential operator
that is $\SO(3)$-equivariant and satisfies
$\CURL\circ\gradst=0$ and $\divst\circ\CURL = 0$.

We denote \eqref{eq:CCdef} by $(\CC,\DD)$,
i.e., $\CC =\oplus_{k=0}^3 \CC^k$ with $\CC^0,\dots,\CC^3$ 
the four spaces in \eqref{eq:CCdef}, 
and $\DD:\CC\to\CC$ is the differential given by the arrows
in \eqref{eq:CCdef}.
\step
As a corollary of the next theorem, 
the homology of $(\CC,\DD)$ is isomorphic to 
$0\oplus0\oplus0\oplus\R^{10}$.
This means for example that the kernel of $\divst$
is equal to the image of $\CURL$,
and that the cokernel of $\divst$ is ten-dimensional.
To prepare for the theorem, we give a simple argument which shows that 
the cokernel of $\divst$ is at least ten-dimensional.
Let $K_1,\dots,K_{10}$ be the conformal Killing fields
\begin{align}\label{eq:confR3}
\begin{aligned}
&e_i \\
&\textstyle\sum_{j=1}^3 x^j e_j\\
&\textstyle\sum_{j,k=1}^{3}\eps_{ijk} x^j e_k \\
&x^i \textstyle\sum_{j=1}^3 x^j e_j - \frac12\|x\|^2 e_i
\end{aligned}
\end{align}
with $i=1,2,3$ and $\eps_{ijk}$ the Levi-Civita symbol,
and where $e_i$ is the $i$th unit vector in $\R^3$.
Then for all $m\in\matst$ and $\ell=1\dots10$,
\begin{equation}\label{eq:divKKdiv}
\divst(m)\cdot K_{\ell} = \div(m\cdot K_\ell)
\end{equation}
Thus by Stokes' theorem the maps
$\vect\to\R$, 
$v\mapsto \int_{\R^3} (K_\ell \cdot v ) \dx$
descend to ten linear functionals on $\coker(\smash{\divst})$,
which are easily seen to be linearly independent.
Here $\dx=dx^1dx^2dx^3$ is the Lebesgue measure.
\begin{theorem}\label{thm:DIV}
For every real number $R>0$ there exists an $\R$-linear map 
\[ 
\GDD:\;  \CC^{k}\to \CC^{k-1}
\]
defined for every integer $k$, with the following properties:
\begin{enumerate}[label=(a\arabic*)]
\item \label{item:AlgGDIV}
The map $\PiDD:\CC^{k}\to \CC^{k}$ defined by 
$$\one - \PiDD \;=\; \DD \GDD + \GDD\DD$$
satisfies $\PiDD^2=\PiDD$ and $\PiDD\DD =0$ and $\DD\PiDD=0$.
In particular, $\PiDD$ is a projection onto 
a complement of the image of $\DD$ in the kernel of $\DD$. 
It is equivalently given as follows:
There exist $\nu_{1},\dots,\nu_{10}\in \vect$ 
with $\nu_\ell|_{\|x\|\ge R}=0$ and
$\int_{\R^{3}}(K_\ell \cdot \nu_{\ell'}) \dx = \delta_{\ell\ell'}$
for $\ell,\ell'=1\dots10$,
such that for all $w\in \CC^{k}$,
\begin{align}\label{eq:PIDDformula}
\PiDD w
&=
\begin{cases}
\textstyle\sum_{\ell=1}^{10}\nu_\ell\int_{\R^{3}}(K_\ell \cdot w) \dx & \text{if $k=3$}\\
0 &\text{if $k=0,1,2$}
\end{cases}
\end{align}

\item \label{item:SuppGDIV}
For all $r\ge R$ and $w\in \CC$ one has:
$w|_{\|x\|\ge r}=0 
\;\Rightarrow\;
(\GDD w)|_{\|x\|\ge r}=0$.

\item \label{item:EstGDIV}
For all $k=1,2,3$ and
$s,\delta\in\R$ with 
$\delta>k-\frac32+2$ when $k=2,3$
and $\delta>k-\frac32$ when $k=1$, 
there exists $C>0$ such that for all 
$w\in\CC^k$,
\begin{align}\label{eq:GDDestimates}
\begin{aligned}
\|\GDD w\|_{\jap{x}^{-\delta+1}\Hx^{s+1}(\R^{3})}
&\;\le\;
C
\|w\|_{\jap{x}^{-\delta}\Hx^{s}(\R^{3})}& \text{if }k&=1,3\\
\|\GDD w\|_{\jap{x}^{-\delta+3}\Hx^{s+3}(\R^{3})}
&\;\le\;
C
\|w\|_{\jap{x}^{-\delta}\Hx^{s}(\R^{3})}& \text{if }k&=2
\end{aligned}
\end{align}
where we use the norms \eqref{eq:HxnormNEW} componentwise.
\item \label{item:ComEstGDIV}
For all $k,s,\delta$ as in \ref{item:EstGDIV},
and all $i=1,2,3$ and $\ell\in\Nzero$, 
there exists $C>0$ such that for all $w\in\CC^k$,
\begin{align*}
\begin{aligned}
\left\|\left[\GDD,\jap{x}^{1-\ell}\p_{x^i}\right] w\right\|_{\jap{x}^{-\delta+1-\ell}\Hx^{s+1}(\R^{3})}
&\;\le\;
C
\|w\|_{\jap{x}^{-\delta}\Hx^{s}(\R^{3})}& \text{if }k&=1,3\\
\left\|\left[\GDD,\jap{x}^{1-\ell}\p_{x^i}\right] w\right\|_{\jap{x}^{-\delta+3-\ell}\Hx^{s+3}(\R^{3})}
&\;\le\;
C
\|w\|_{\jap{x}^{-\delta}\Hx^{s}(\R^{3})}& \text{if }k&=2
\end{aligned}
\end{align*}
\end{enumerate}
\end{theorem}
Note that for $v\in\CC^3=\vect$ we have $v-\PiDD v = \smash{\divst}(\GDD v)$,
so $\GDD$ yields in particular a right inverse of \smash{$\divst$}
up to the necessary integrability conditions \eqref{eq:PIDDformula}.

We outline the proof.
The main task is to 
construct a homotopy equivalence 
\begin{equation}\label{eq:contrdtenddivNEW}
\begin{tikzpicture}[baseline=(current  bounding  box.center)]
  \matrix (m) [matrix of math nodes, column sep = 20mm, minimum width = 4mm ]
  {
    (\CC,\DD) & (\smash{\Wten},\smash{\dten}) \\
  };
  \path[-stealth]
    (m-1-1) edge [right hook-stealth,transform canvas={yshift=1.2mm}] node [above] {\footnotesize $\DIVtodr$} (m-1-2)
            edge [out=180+13,in=180-13,min distance=5mm] node
                 [left,xshift=-1mm] {
                 \footnotesize $0$  homotopy 
                 } (m-1-1)
    (m-1-2) edge [transform canvas={yshift=-1.2mm}] node [below] {\footnotesize $\drtoDIV$} (m-1-1)
            edge [out=-17,in=+17,min distance=5mm] node
                 [right,xshift=1mm
                 ] {
                 \footnotesize $\HHdr$  homotopy
                 } (m-1-2);
\end{tikzpicture}
\end{equation}
with a complex $(\Wten,\dten)$ given by 
ten copies of the de Rham complex
where the differential is perturbed by a nilpotent term.
Theorem \ref{thm:Mainx} and the homological perturbation
lemma (Lemma \ref{lem:HPL}) yield a contraction
from $(\Wten,\dten)$ to its homology,
\begin{equation}\label{eq:contrdtenhomNEW}
\begin{tikzpicture}[baseline=(current  bounding  box.center)]
  \matrix (m) [matrix of math nodes, column sep = 20mm, minimum width = 4mm ]
  {
    (\smash{\Wten},\smash{\dten}) & (\smash{0\oplus0\oplus0\oplus\R^{10}},0) \\
  };
  \path[-stealth]
    (m-1-1) edge [transform canvas={yshift=1.2mm}] node [above] {\footnotesize $\Pten$} (m-1-2)
            edge [out=180+13,in=180-13,min distance=5mm] node
                 [left,xshift=-1mm] {
                 \footnotesize $\Gten$  homotopy 
                 } (m-1-1)
    (m-1-2) edge [left hook-stealth,transform canvas={yshift=-1.2mm}] node [below] {\footnotesize $\Iten$} (m-1-1)
            edge [out=-6,in=+7,min distance=4mm] node
                 [right,xshift=1mm
                 ] {
                 \footnotesize $0$  homotopy
                 } (m-1-2);
\end{tikzpicture}
\end{equation}
Composing \eqref{eq:contrdtenddivNEW} and 
\eqref{eq:contrdtenhomNEW} (using Lemma \ref{lem:compHE}) yields a contraction from $(\CC,\DD)$ to the homology.
The resulting homotopy on $\CC$ will be the map $\GDD$.
\step
We now construct \eqref{eq:contrdtenddivNEW}.
Let $(\WdR,\WddR)$ be the complex
\begin{align*}
0 \to 
\func \xrightarrow{\grad} 
\vect \xrightarrow{\curl} 
\vect \xrightarrow{\div}
\func \to0
\end{align*}
That is, $\WdR=\oplus_{k=0}^3\WdR^k$ is given by 
the four spaces, and $\WddR$ is given by the arrows.
This is the de Rham complex $(\Omega_c(\R^{3}),\ddR)$ written with respect
to the standard $dx$-basis.
Let $\Wten= \WdR^{\oplus10}$ be the direct sum of ten copies of $\WdR$,
with grading given by 
$\Wten=\oplus_{k=0}^3\Wten^k$ where $\Wten^k = (\WdR^k)^{\oplus10}$.
We decompose 
\begin{align}\label{eq:Wdecomp}
\Wten
\;=\; 
\Wten_0
\oplus
\Wten_1
\oplus
\Wten_2
\end{align}
where 
$\Wten_0=\WdR^{\oplus3}$ and 
$\Wten_1=\WdR\oplus\WdR^{\oplus3}$ and
$\Wten_2=\WdR^{\oplus3}$.
Note that 
\begin{align}\label{eq:Wtenid}
\begin{aligned}
\Wten_0&\;=\; \vect \oplus \mat\oplus\mat\oplus \vect \\
\Wten_1&\;=\; (\func\oplus\vect) \oplus (\vect\oplus\mat)\oplus(\vect\oplus\mat)\oplus(\func\oplus\vect)\\
\Wten_2&\;=\; \vect \oplus \mat\oplus\mat\oplus \vect
\end{aligned}
\end{align}
where the four summands on the right correspond to 
$\Wten_{i}^{0}\oplus\Wten_{i}^{1}\oplus\Wten_{i}^{2}\oplus\Wten_{i}^{3}$
for $i=0,1,2$.
For example $\Wten_0^0=\func^{\oplus3}=\vect$ are column vectors,
and $\Wten_0^1=\vect^{\oplus3}=\mat$ are 3-by-3 matrices,
where the three summands of $\vect^{\oplus3}$ are the matrix columns.

We now define a differential 
\begin{equation}\label{eq:dten}
\dten \;=\; \WddR^{\oplus10} + \deltaten \;:\;\;\Wten\to\Wten
\end{equation}
Note that the first term acts separately on each summand of \eqref{eq:Wdecomp}.
For example the action on $\Wten_0$ is
\smash{$
\vect \xrightarrow{\grad} 
\mat \xrightarrow{\curl} 
\mat \xrightarrow{\div}
\vect$,
using \eqref{eq:Wtenid}
and \eqref{eq:gradcurldivonmatrices}.}
The second term is, relative to \eqref{eq:Wdecomp}, the 3-by-3 block matrix
\begin{align}\label{eq:dtendef}
\deltaten = 
\begin{pmatrix}
0 & 0 & 0 \\
\deltaten_{10} & 0 & 0\\
0 & \deltaten_{21} & 0
\end{pmatrix}
\qquad\qquad
\begin{aligned}
\deltaten_{10}: \Wten_0\to\Wten_1\\
\deltaten_{21}: \Wten_1\to\Wten_2
\end{aligned}
\end{align}
The two entries are given as follows.
Let
$\tovec:\mat\to\vect$,
$\tovec(m)_{i} = \textstyle\sum_{j,k=1}^3\eps_{ijk}m_{jk}$
and 
$\tomat:\vect\to\mat$,
$\tomat(v)_{ij} = \textstyle\sum_{k=1}^3\eps_{ijk} v_k$.
Using \eqref{eq:Wtenid},
for $v_0\oplus m_1\oplus m_2\oplus v_3 \in\Wten_0$,
\begin{align}%
\deltaten_{10}(v_0)&= v_0\oplus (-\tomat(v_0))\nonumber\\
\deltaten_{10}(m_1)&= (-\tovec(m_1))\oplus (-m_1^T+\tr(m_1)\one_3)\nonumber\\
\deltaten_{10}(m_2)&= \tr(m_2)\oplus \tovec(m_2)\nonumber\\
\deltaten_{10}(v_3)&= 0\nonumber
\intertext{
and for $(f_0\oplus v_0)\oplus (v_1\oplus m_1)\oplus (v_2\oplus m_2)\oplus (f_3\oplus v_3) \in\Wten_1$,}
\deltaten_{21}(f_0\oplus v_0)&= f_0\one_3 - \tomat(v_0) \nonumber\\
\deltaten_{21}(v_1\oplus m_1)&= \tomat(v_1) - m_1^T+\tr(m_1)\one_3  \nonumber\\
\deltaten_{21}(v_2\oplus m_2)&= v_2 + \tovec(m_2) \nonumber\\
\deltaten_{21}(f_3\oplus v_3)&= 0  \label{eq:dtensub}
\end{align}
It is a routine calculation to check that \eqref{eq:dten} is indeed a differential,
$\dten^2=0$.
\begin{lemma}\label{lem:retract}
The triple
$$\big(\,\drtoDIV:\Wten\to\CC,\,
\DIVtodr:\CC\to\Wten,\,\HHdr:\Wten\to\Wten\,\big)$$
defined in \eqref{eq:IPHdef} below 
is a contraction (see \eqref{eq:contraction}) with side conditions
(see \eqref{eq:sidecond})
from $(\Wten,\dten)$ to $(\CC,\DD)$, c.f.~\eqref{eq:contrdtenddivNEW}.
That is, 
\begin{align}\label{eq:ipgcontr}
\begin{aligned}
&\DD\drtoDIV=\drtoDIV\dten,\;\;
\DIVtodr\DD = \dten\DIVtodr \\
&\one-\DIVtodr \drtoDIV = \dten \HHdr+\HHdr\dten\\
&\one-\drtoDIV\DIVtodr = 0\\
&\drtoDIV\HHdr=0,\;\; \HHdr\DIVtodr=0,\;\; \HHdr^2=0
\end{aligned}
\end{align}
Relative to \eqref{eq:Wdecomp}, the maps are 
given by the block matrices 
\begin{align}\label{eq:PIHRretract}
\drtoDIV = \begin{pmatrix}\drtoDIV_0&\drtoDIV_1&\drtoDIV_2  \end{pmatrix}
&&
\DIVtodr = \begin{pmatrix}\DIVtodr_0\\\DIVtodr_1\\\DIVtodr_2  \end{pmatrix}
&&
\HHdr = 
\begin{pmatrix}
0 & \HHdr_{01} & \HHdr_{02} \\
0 & 0 & \HHdr_{12}\\
0 & 0 & 0
\end{pmatrix}
\end{align}
To define the entries we use \eqref{eq:Wtenid}.
For $v_0\oplus m_1\oplus m_2\oplus v_3\in\Wten_0$,
\begingroup
\allowdisplaybreaks
\begin{subequations}\label{eq:IPHdef}
\begin{align}
\drtoDIV_0(v_0) &= 0 \nonumber\\
\drtoDIV_0(m_1) &= 0 \nonumber\\
\drtoDIV_0(m_2) &= \symtr (m_2) \nonumber\\
\drtoDIV_0(v_3) &= v_3 \nonumber 
\intertext{For $(f_0\oplus v_0)\oplus (v_1\oplus m_1)\oplus (v_2\oplus m_2)\oplus (f_3\oplus v_3)\in\Wten_1$,}
\drtoDIV_1(f_0\oplus v_0) &=0  \nonumber\\
\drtoDIV_1(v_1\oplus m_1) &=0 \nonumber\\
\drtoDIV_1(v_2\oplus m_2) &=\symtr (\curl (m_2^T))\nonumber\\
\drtoDIV_1(f_3\oplus v_3) &=-\tfrac13 \grad (f_3) + \tfrac12 \curl (v_3)\nonumber
\intertext{For $v_0\oplus m_1\oplus m_2\oplus v_3\in\Wten_2$,}
\drtoDIV_2(v_0) &= 2v_0 \nonumber\\
\drtoDIV_2(m_1) &= 2\symtr(m_1) \nonumber\\
\drtoDIV_2(m_2) &= 
2(\symtr\circ\grad\circ\div\circ(\symtr+\tfrac{1}{12} \one_3 \tr))(m_2)- \symtr(\Delta m_2)  \nonumber\\
\drtoDIV_2(v_3) &= \tfrac13 \grad(\div (v_3))\label{eq:Pdef}
\intertext{For $v_0\oplus m_1\oplus m_2\oplus v_3\in\CC$,}
\DIVtodr_0(v_0) &= 
\tfrac16 \grad (\div (v_0)) \nonumber\\
\DIVtodr_0(m_1) &= 
((\symtr+\tfrac{1}{12}\one_3\tr)\circ\grad\circ\div) (m_1)-\tfrac12\Delta m_1 \nonumber\\
\DIVtodr_0(m_2) &= m_2 \nonumber\\
\DIVtodr_0(v_3) &= v_3 \nonumber\\[1mm]
\DIVtodr_1(v_0) &= -\tfrac16  \div(v_0) \oplus \tfrac14 \curl(v_0) \nonumber\\
\DIVtodr_1(m_1) &= 0\oplus \tfrac12\curl(m_1)^T\nonumber\\
\DIVtodr_1(m_2) &= 0 \nonumber\\
\DIVtodr_1(v_3) &= 0 \nonumber\\[1mm]
\DIVtodr_2(v_0) &= \tfrac12 v_0 \nonumber\\
\DIVtodr_2(m_1) &= \tfrac12 m_1 \nonumber\\
\DIVtodr_2(m_2) &= 0 \nonumber\\
\DIVtodr_2(v_3) &= 0 \label{eq:Idef}
\intertext{For $(f_0\oplus v_0)\oplus (v_1\oplus m_1)
\oplus(v_0\oplus m_2)\oplus (f_3\oplus v_3)\in\Wten_1$,}
\HHdr_{01}(f_0\oplus v_0)&=0\nonumber\\
\HHdr_{01}(v_1\oplus m_1)&=-\tfrac12\tovec(m_1) \nonumber\\
\HHdr_{01}(v_2\oplus m_2)&= - m_2^T + \tfrac12 \tr(m_2)\one_3 \nonumber\\
\HHdr_{01}(f_3\oplus v_3)&= \tfrac13 f_3 \one_3 + \tfrac12 \tomat(v_3)\nonumber
\intertext{For $v_0\oplus m_1\oplus m_2\oplus v_3\in\Wten_2$,}
\HHdr_{12}(v_0) &= 0 \nonumber\\
\HHdr_{12}(m_1) &= \tfrac13 \tr(m_1) \oplus (-\tfrac12 \tovec(m_1)) \nonumber\\
\HHdr_{12}(m_2) &= \tfrac12\tovec(m_2) \oplus (-\symtr(m_2) +\tfrac16 \tr(m_2)\one_3 )\nonumber\\
\HHdr_{12}(v_3) &= v_3\oplus0 \nonumber\\[1mm]
\HHdr_{02}(v_0) &= 0 \nonumber\\
\HHdr_{02}(m_1) &= 
-\tfrac12 \div (\symtr(m_1)) -\tfrac14 \curl (\tovec(m_1)) \nonumber\\
\HHdr_{02}(m_2) &= 
-\tfrac12(\curl(m_2)^T+\curl(m_2^T)) + \tfrac12 \tomat (\div(m_2)) \nonumber\\
\HHdr_{02}(v_3) &= -\tfrac13 \div(v_3) \one_3
\end{align}
\end{subequations}
\endgroup
\end{lemma}
\begin{proof}
The identities \eqref{eq:ipgcontr} are checked by 
direct, laborious, calculation.\qed
\end{proof}
The structure of \eqref{eq:dtensub} and \eqref{eq:IPHdef} 
that will be relevant for the proof of Theorem \ref{thm:DIV}
is indicated in Figure \ref{fig:retract}.%
\begin{figure}%
\begin{equation*}
\vcenter{\hbox{%
\begin{tikzpicture}[node distance = 9.5mm and 17mm, auto]
  \node (a0) {$\vect$};
  \node (a1) [right=of a0] {$\matst$};
  \node (a2) [right=of a1] {$\matst$};
  \node (a3) [right=of a2] {$\vect$};
  \node (b0) [below=1.2cm of a0] {$\vect$};
  \node (b1) [right=of b0] {$\mat$};
  \node (b2) [right=of b1] {$\mat$};
  \node (b3) [right=of b2] {$\vect$};
  \node (c0) [below=0.6cm of b0] {$\func\oplus\vect$};
  \node (c1) [below=0.6cm of b1] {$\vect\oplus\mat$};
  \node (c2) [below=0.6cm of b2] {$\vect\oplus\mat$};
  \node (c3) [below=0.6cm of b3] {$\func\oplus\vect$};
  \node (d0) [below=0.6cm of c0] {$\vect$};
  \node (d1) [right=of d0] {$\mat$};
  \node (d2) [right=of d1] {$\mat$};
  \node (d3) [right=of d2] {$\vect$};
\draw[->] (a0) to node {\footnotesize $\gradst$} (a1);
\draw[->] (a1) to node {\footnotesize $\CURL$} (a2);
\draw[->] (a2) to node {\footnotesize $\divst$} (a3);
\draw[->] (b0) to node {\footnotesize $\grad$} (b1);
\draw[->] (b1) to node {\footnotesize $\curl$} (b2);
\draw[->] (b2) to node {\footnotesize $\div$} (b3);
\draw[->] (c0) to node {\footnotesize $\grad$} (c1);
\draw[->] (c1) to node {\footnotesize $\curl$} (c2);
\draw[->] (c2) to node {\footnotesize $\div$} (c3);
\draw[->] (d0) to node {\footnotesize $\grad$} (d1);
\draw[->] (d1) to node {\footnotesize $\curl$} (d2);
\draw[->] (d2) to node {\footnotesize $\div$} (d3);
\draw[->,Lcol,dashed] (b0) to node[above=-0.07cm] {\footnotesize $1$} (c1);
\draw[->,Lcol,dashed] (b1) to node[above=-0.07cm] {\footnotesize $1$} (c2);
\draw[->,Lcol,dashed] (b2) to node[above=-0.07cm] {\footnotesize $1$} (c3);
\draw[->,Lcol,dashed] (c0) to node[above=-0.07cm] {\footnotesize $1$} (d1);
\draw[->,Lcol,dashed] (c1) to node[above=-0.07cm] {\footnotesize $1$} (d2);
\draw[->,Lcol,dashed] (c2) to node[above=-0.07cm] {\footnotesize $1$} (d3);
\draw[->,bend right,Icol] (a0) to node[at end,xshift=-2mm,yshift=1mm] {\footnotesize $\p^2$} (b0);
\draw[->,bend right,Icol] (a0) to node[at end,xshift=-2mm,yshift=1mm] {\footnotesize $\p$} (c0);
\draw[->,bend right,Icol] (a0) to node[at end,xshift=-2mm,yshift=1mm] {\footnotesize $1$} (d0);
\draw[->,bend right,Icol] (a1) to node[at end,xshift=-2mm,yshift=1mm] {\footnotesize $\p^2$} (b1);
\draw[->,bend right,Icol] (a1) to node[at end,xshift=-2mm,yshift=1mm] {\footnotesize $\p$} (c1);
\draw[->,bend right,Icol] (a1) to node[at end,xshift=-2mm,yshift=1mm] {\footnotesize $1$} (d1);
\draw[->,bend right,Icol] (a2) to node[at end,xshift=-2mm,yshift=1mm] {\footnotesize $1$} (b2);
\draw[->,bend right,Icol] (a3) to node[at end,xshift=-2mm,yshift=1mm] {\footnotesize $1$} (b3);
\draw[->,bend right,Pcol] (d0) to node[at start,xshift=2mm,yshift=1mm] {\footnotesize $1$} (a0);
\draw[->,bend right,Pcol] (d1) to node[at start,xshift=2mm,yshift=1mm] {\footnotesize $1$} (a1);
\draw[->,bend right,Pcol] (b2) to node[at start,xshift=2mm,yshift=1mm] {\footnotesize $1$} (a2);
\draw[->,bend right,Pcol] (c2) to node[at start,xshift=2mm,yshift=1mm] {\footnotesize $\p$} (a2);
\draw[->,bend right,Pcol] (d2) to node[at start,xshift=2mm,yshift=1mm] {\footnotesize $\p^2$} (a2);
\draw[->,bend right,Pcol] (b3) to node[at start,xshift=2mm,yshift=1mm] {\footnotesize $1$} (a3);
\draw[->,bend right,Pcol] (c3) to node[at start,xshift=2mm,yshift=1mm] {\footnotesize $\p$} (a3);
\draw[->,bend right,Pcol] (d3) to node[at start,xshift=2mm,yshift=1mm] {\footnotesize $\p^2$} (a3);
\end{tikzpicture}}}
\end{equation*}
\captionsetup{width=115mm}
\caption{%
The row at the top is the complex $(\CC,\DD)$,
and the three rows at the bottom are the complex $(\Wten,\dten)$,
where the dashed arrows indicate $\deltaten$.
The downwards arrows indicate 
the nonzero components of $\smash{\DIVtodr}$, 
and the upwards arrows the nonzero components of $\drtoDIV$.
The labels $1,\p,\p^2$ indicate whether the maps
are zeroth, first, second order 
constant coefficient
homogeneous differential operators.
The homotopy $\HHdr$ is not shown.}
\label{fig:retract}
\end{figure}%
\begin{remark}
To find the differential \smash{$\WddR^{\oplus10}+\deltaten$}
and the contraction $(\drtoDIV,\DIVtodr,\HHdr)$
we made an ansatz where we imposed 
that $\deltaten$ and $\HHdr$ are strictly lower respectively
upper triangular relative to \eqref{eq:Wdecomp},
and that all maps are constant coefficient homogeneous differential operators,
$\SO(3)$-equivariant, and have suitable homogeneity.
Then a solution of \smash{$(\WddR^{\oplus10}+\deltaten)^2=0$} and 
\eqref{eq:ipgcontr} exists and is essentially unique.
It can conveniently be obtained using computer algebra.
\end{remark}
\begin{proof}[of Theorem \ref{thm:DIV}]
Let $\GdR$, $\PidR$ be as in Theorem \ref{thm:Mainx}
applied to $n=3$ and the radius $R$. 
We view $\GdR$, $\PidR$ as maps $\WdR\to\WdR$,
using the standard $dx$-basis.
By Corollary \ref{cor:thm2} we can assume that 
\begin{align}\label{eq:sideconcr}
\PidR\GdR=0 && \GdR\PidR=0 && \GdR^2=0
\end{align}
Let $\HWdr = 0\oplus 0\oplus 0\oplus \R$
and fix a factorization $\PidR=\IdR\PdR$
with maps $\PdR:\WdR\to\HWdr$ and $\IdR:\HWdr\to\WdR$.
Then by \ref{item:AlgFullx} in Theorem \ref{thm:Mainx},
the triple $(\PdR,\IdR,\GdR)$ is a contraction
from $(\WdR,\WddR)$ to $(\HWdr,0)$,
and by \eqref{eq:sideconcr} it satisfies the side conditions
\eqref{eq:sidecond}.
Then 
$(\PdR^{\oplus10},\IdR^{\oplus10},\GdR^{\oplus10})$
is a contraction with side conditions from $(\Wten,\WddR^{\oplus10})$ 
to $(\HWdr^{\oplus10},0)$. 
Lemma \ref{lem:HPL} and 
the fact that $\dten=\WddR^{\oplus10}+\deltaten$
is a differential with 
nilpotent $\deltaten \GdR^{\oplus10}$ (its cube vanishes)
then imply that the triple
\begin{equation}\label{eq:contrten}
\big(
\Pten = \PdR^{\oplus10}(\one + \deltaten \GdR^{\oplus10})^{-1},\ 
\Iten = \IdR^{\oplus10},\ 
\Gten = \GdR^{\oplus10}(\one + \deltaten \GdR^{\oplus10})^{-1}\,
\big)
\end{equation}
is a contraction with side conditions from $(\smash{\Wten},\smash{\dten})$
to $(\smash{\HWdr^{\oplus10}},\Pten\smash{\dten}\Iten=0)$, c.f.~\eqref{eq:contrdtenhomNEW}.
Note that we used
$\Iten=(\one + \GdR^{\oplus10}\deltaten)^{-1} \IdR^{\oplus10}=\IdR^{\oplus10}$.

Recall the contraction $(\drtoDIV,\DIVtodr,\HHdr)$ in Lemma \ref{lem:retract},
and note that $(\DIVtodr,\drtoDIV,0,\HHdr)$ is a homotopy equivalence
(see \eqref{eq:HE})
between $(\CC,\DD)$ and $(\smash{\Wten},\smash{\dten})$, c.f.~\eqref{eq:contrdtenddivNEW}.
We compose this with the contraction \eqref{eq:contrten},
which by Lemma \ref{lem:compHE} yields the contraction 
\begin{align}\label{eq:ContractionDIV}
\big(\,
\PDD = \Pten \DIVtodr,\ 
\IDD = \drtoDIV \Iten,\ 
\GDD = \drtoDIV\Gten\DIVtodr
\,\big)
\end{align}
from $(\CC,\DD)$ to $(\HWdr^{\oplus10},0)$. 
In particular, see \eqref{eq:HEcond},
\begin{subequations}\label{eq:allDIV}
\begin{align}
&\text{$\IDD,\PDD$ are chain maps}\label{eq:IPchainDIV}\\
&\one-\IDD\PDD=\DD\GDD+\GDD\DD \label{eq:IPDIV}\\
&\one-\PDD\IDD=0\label{eq:PIDIV}
\end{align}
\end{subequations}
We check that $\GDD$ has the properties stated in Theorem \ref{thm:DIV}.

\ref{item:AlgGDIV}: 
By \eqref{eq:IPDIV} we have $\PiDD=\IDD\PDD$,
which is a projection by \eqref{eq:PIDIV}, and satisfies
$\PiDD\DD=0$, $\DD\PiDD=0$ by \eqref{eq:IPchainDIV}. 
We check \eqref{eq:PIDDformula}.
The map $\PiDD$ vanishes in degree $0,1,2$ because $\PidR$ does, 
see \eqref{eq:contrten} and \eqref{eq:ContractionDIV}.
Recall that the homology of $(\CC,\DD)$ in degree 3 is given by $\coker(\divst)$.
By \eqref{eq:allDIV}, the map $\PDD$ induces an isomorphism
$\coker(\divst)\to\R^{10}$.
By \eqref{eq:divKKdiv}, integration against the conformal 
Killing fields $K_1,\dots,K_{10}$
in \eqref{eq:confR3}
induces ten linear functionals on $\coker(\divst)$,
which are easily seen to be linearly independent.
Therefore there is an invertible matrix
$A\in \R^{10\times10}$ such that for all 
$w\in\CC^3=\vect$,
\begin{align}\label{eq:PDDexpl}
\PDD(w) = 
A
\begin{pmatrix}
\int_{\R^{3}}(K_1 \cdot w)\dx\\
\vdots\\
\int_{\R^{3}}(K_{10} \cdot w)\dx
\end{pmatrix}
\end{align}
After changing basis on $\R^{10}$ we can assume that $A=\one_{10}$.
Now consider $\IDD$.
By \eqref{eq:PIDIV} this is injective.
Further all elements in its image vanish on $\|x\|\ge R$.
This follows from $\IDD = \drtoDIV \IdR^{\oplus10}$ 
by \eqref{eq:contrten} and \eqref{eq:ContractionDIV}, 
the fact that $\drtoDIV$ is local, see \eqref{eq:Pdef},
and because elements in $\image(\IdR)=\image(\PidR)$ vanish on $\|x\|\ge R$.
Hence there exist linearly independent $\nu_1,\dots,\nu_{10}\in\vect$
that vanish on $\|x\|\ge R$ and such that for all $(c_1,\dots,c_{10})\in\R^{10}$
one has
$\IDD(c_1,\dots,c_{10}) = \textstyle\sum_{\ell=1}^{10}  \nu_\ell c_\ell$.
Together with \eqref{eq:PDDexpl} this implies \eqref{eq:PIDDformula}. 
Note that \eqref{eq:PIDDformula} and $\PiDD^2=\PiDD$ imply 
$\int_{\R^{3}}(K_\ell \cdot \nu_{\ell'}) \dx = \delta_{\ell\ell'}$.

\ref{item:SuppGDIV}:
By \eqref{eq:contrten} and \eqref{eq:ContractionDIV},
\begin{equation}\label{eq:GDDfull}
\GDD = \drtoDIV\GdR^{\oplus10}(\one + \deltaten \GdR^{\oplus10})^{-1}\DIVtodr
\end{equation}
The maps $\deltaten,\drtoDIV,\DIVtodr$ are local, 
see \eqref{eq:dtensub} and \eqref{eq:IPHdef}.
Hence \ref{item:SuppGDIV} follows from 
the property \ref{item:SupportFullx} in Theorem \ref{thm:Mainx} for $\GdR$.

\ref{item:EstGDIV}:
Using \eqref{eq:GDDfull}, \eqref{eq:dtendef} and \eqref{eq:PIHRretract}, 
\begin{align}\label{eq:GDDexpl}
\GDD
=
\Big( \drtoDIV_0\ \drtoDIV_1\ \drtoDIV_2  \Big)
\begin{pmatrix}
\GdR^{\oplus3} & 0 & 0\\
-\GdR^{\oplus4}\deltaten_{10}\GdR^{\oplus3} & \GdR^{\oplus4} & 0\\
\GdR^{\oplus3}\deltaten_{21}\GdR^{\oplus4} \deltaten_{10} \GdR^{\oplus3}
& -\GdR^{\oplus3}\deltaten_{21}\GdR^{\oplus4} & \GdR^{\oplus3}
\end{pmatrix}
\begin{pmatrix}\DIVtodr_0\\\DIVtodr_1\\\DIVtodr_2  \end{pmatrix}
\end{align}
Recall that $\deltaten_{10},\deltaten_{21}$ 
are zeroth order constant coefficient homogeneous
differential operators, see \eqref{eq:dtensub}. 
Recall the following standard estimate:
For all $s,\delta\in\R$ and $i=1,2,3$ there exists $C'>0$
such that for all $f\in \func$,
\begin{align}\label{eq:pest}
\|\p_{x^i}f\|_{\jap{x}^{-\delta-1}\Hx^{s-1}(\R^{3})}
\le C'\|f\|_{\jap{x}^{-\delta}\Hx^{s}(\R^{3})}
\end{align}
We check \eqref{eq:GDDestimates}
(the structure of $\DIVtodr,\drtoDIV$ that will be used
is indicated in Figure \ref{fig:retract}):
\begin{itemize}
\item 
$k=3$:
Applying \eqref{eq:GDDexpl} to $w\in\CC^3$ 
we have $\DIVtodr_0(w)=w$, $\DIVtodr_1(w)=0$, $\DIVtodr_2(w)=0$,
see \eqref{eq:Idef}.
Further $\drtoDIV_0$, $\drtoDIV_1$, $\drtoDIV_2$
act as constant coefficient homogeneous zeroth, first respectively second order
differential operators, see \eqref{eq:Pdef}. 
Hence \eqref{eq:GDDestimates} follows by 
repeated application of \eqref{eq:estGx} and \eqref{eq:pest}.
\item 
$k=2$:
Applying \eqref{eq:GDDexpl} to $w\in\CC^2$
we have $\DIVtodr_0(w)=w$, $\DIVtodr_1(w)=0$, $\DIVtodr_2(w)=0$.
Further $\drtoDIV_0$ and $\drtoDIV_1$ vanish, 
and $\drtoDIV_2$ is of order zero.
Hence \eqref{eq:GDDestimates} follows by 
repeated application of \eqref{eq:estGx}.
\item 
$k=1$:
Applying \eqref{eq:GDDexpl} to $w\in\CC^1$, the maps 
$\DIVtodr_0$, $\DIVtodr_1$, $\DIVtodr_2$ 
are constant coefficient homogeneous
second, first respectively zeroth order differential operators.
Further $\drtoDIV_0$ and $\drtoDIV_1$ vanish,
and $\drtoDIV_2$ is of order zero.
Hence \eqref{eq:GDDestimates} follows by 
repeated application of \eqref{eq:estGx}
and \eqref{eq:pest}.
\end{itemize}
The commutator estimates \ref{item:ComEstGDIV} follow analogously, using 
\ref{item:CommutatorFullx} in Theorem \ref{thm:Mainx}.
\qed
\end{proof}

\begin{remark}[Linearized constraints of general relativity]\label{rem:linconstraints}
Let $\matsym\simeq\func^{\oplus6}$ be the symmetric 3-by-3 matrices.
The linearization of the constraint equations 
about flat space in three dimensions
is given by \cite{MaoTao,MaoOhTao} the two operators
$\PGRg:\matsym\to\func$ and $\PGRk:\matsym\to\vect$, 
\begin{align*}
\PGRg(m) &= \div(\div(m))&
\PGRk(m) &=\div(m)
\end{align*}
using \eqref{eq:gradcurldivonmatrices}.
Using Theorems \ref{thm:Mainx}, \ref{thm:DIV} 
it is easy to construct right inverses of $\PGRg$, $\PGRk$:
\begin{itemize}
\item 
Let $\Gfullx:\func\to\vect$ be the map in Theorem \ref{thm:Mainx} with $\Mdim=3$
and restricted to degree 3.
Then $\GGRg:\func\to\matsym$ given by
$$\GGRg(f)_{ij}=\tfrac12\Gfullx(\Gfullx(f)_i)_j+\tfrac12\Gfullx(\Gfullx(f)_j)_i$$
is a right inverse of $\PGRg$ up to 4 necessary integrability conditions:
There exist $\varphi_0,\dots,\varphi_3\in\func$
such that for all $f\in\func$,
\begin{align*}
\PGRg \GGRg(f) &= 
\textstyle f - 
\varphi_0(\int_{\R^3}f\dx) - \sum_{i=1}^3 \varphi_i (\int_{\R^3}x^i f\dx)
\end{align*}
(I thank Piotr Chru\'sciel for discussions related to this operator.) 

\item 
Let $\GDD:\vect\to\matst$ be the map in Theorem \ref{thm:DIV}, 
restricted to degree 3. 
Fix $\bump_4\in\func$ that satisfies
$\int_{\R^3}\bump_4=1$, 
$\int_{\R^3}x^i\bump_4=0$,
$\int_{\R^3}x^ix^j\bump_4=\delta_{ij}$,
and set
$\bump_8 = x^1\bump_4$,
$\bump_9 = x^2\bump_4$,
$\bump_{10} = x^3\bump_4$.
Then $\GGRk:\vect \to \matsym$ given by 
\begin{align*}
\GGRk(v) 
&= 
\GDD\left(v + \tfrac13\grad(\tau(v))
\right) - \tfrac13 \tau(v)\one_3\\
\tau(v)
&=\textstyle
\sum_{\ell=4,8,9,10} \bump_\ell \int_{\R^3} (K_\ell\cdot v) \dx
\end{align*}
is a right inverse of $\PGRk$ up to 6 necessary integrability conditions:
Let $\nu_\ell$ be as in \eqref{eq:PIDDformula},
then for all $v\in\vect$,
\begin{align*}
\PGRk \GGRk(v)
&=
\textstyle
v -
\sum_{\ell=1,2,3,5,6,7} \nu_\ell \int_{\R^3}(K_\ell\cdot v)\dx
\end{align*}
\end{itemize}
The analytic properties in Theorem \ref{thm:Mainx}, \ref{thm:DIV} imply analogous properties for $\GGRg$, $\GGRk$.
\end{remark}


{\footnotesize
}

\bigskip
  \footnotesize

  \textsc{Department of Mathematics, Stanford University, Stanford CA 94305, USA}\par\nopagebreak
  \textit{Email address:} \texttt{anuetzi@stanford.edu}

\end{document}